\title[Semi-integral Brauer--Manin and quadric orbifolds]{Semi-integral Brauer--Manin obstruction and quadric orbifold pairs}
\date{\today}
\author{Vladimir Mitankin}
\address{Vladimir Mitankin, Institute of Mathematics and Informatics, Bulgarian Academy of Sciences, Akad. Georgi Bonchev 8, 1113 Sofia, Bulgaria.}
\address{Institut für Algebra, Zahlentheorie und Diskrete Mathematik,
Leibniz Universität Hannover, Welfengarten 1, 30167 Hannover, Germany.}
\email{v.mitankin@math.bas.bg}
\author{Masahiro Nakahara}
\address{Masahiro Nakahara, Department of Mathematics, University of Washington, Seattle, WA 98195, USA.}
\email{mn75@uw.edu}
\author{Sam Streeter}
\address{Sam Streeter, School of Mathematics, University of Bristol, Woodland Road, Bristol, BS1 8UG, UK.}
\email{sam.streeter@bristol.ac.uk}
\subjclass[2020]
{14G12, 14G05, 11G35}
\theoremstyle{definition}
\newtheorem{mydef}{Definition}
\newtheorem{note}[mydef]{Note}
\newtheorem{remark}[mydef]{Remark}
\newtheorem{question}[mydef]{Question}
\newtheorem{example}[mydef]{Example}
\theoremstyle{plain}
\newtheorem{theorem}[mydef]{Theorem}
\newtheorem*{theorem*}{Theorem}
\newtheorem{proposition}[mydef]{Proposition}
\newtheorem{lemma}[mydef]{Lemma}
\newcommand{\thistheoremname}{}
\newtheorem{genericthm}[mydef]{\thistheoremname}
\newtheorem*{genericthm*}{\thistheoremname}
\newenvironment{namedthm*}[1]
  {\renewcommand{\thistheoremname}{#1}%
   \begin{genericthm*}}
  {\end{genericthm*}}
\numberwithin{equation}{section}
\numberwithin{mydef}{section}
\let\originalleft\left
\let\originalright\right
\renewcommand{\left}{\mathopen{}\mathclose\bgroup\originalleft}
\renewcommand{\right}{\aftergroup\egroup\originalright}
\DeclareMathOperator{\Bl}{Bl}
\DeclareMathOperator{\Br}{Br}
\DeclareMathOperator{\cl}{cl}
\DeclareMathOperator{\DDelta}{\widetilde{\Delta}}
\DeclareMathOperator{\ev}{ev}
\DeclareMathOperator{\Proj}{Proj}
\DeclareMathOperator{\red}{red}
\DeclareMathOperator{\Spec}{Spec}
\DeclareMathOperator{\ord}{ord}
\def\Z{\ifmmode{{\mathbb Z}}\else{${\mathbb Z}$}\fi}
\def\Q{\ifmmode{{\mathbb Q}}\else{${\mathbb Q}$}\fi}
\def\P{\ifmmode{{\mathbb P}}\else{${\mathbb P}$}\fi}
\def\H{\ifmmode{{\mathrm H}}\else{${\mathrm H}$}\fi}
\def\R{\ifmmode{{\mathbb R}}\else{${\mathbb R}$}\fi}
\def\F{\ifmmode{{\mathbb F}}\else{${\mathbb F}$}\fi}
\def\O{\ifmmode{{\calO}}\else{${\calO}$}\fi}
\newcommand{\A}{\mathbb{A}}
\newcommand{\calA}{{\mathcal A}}
\newcommand{\calB}{{\mathcal B}}
\newcommand{\calC}{{\mathcal C}}
\newcommand{\calD}{{\mathcal D}}
\newcommand{\calE}{{\mathcal E}}
\newcommand{\calF}{{\mathcal F}}
\newcommand{\calH}{{\mathcal H}}
\newcommand{\calL}{{\mathcal L}}
\newcommand{\calO}{{\mathcal O}}
\newcommand{\calP}{{\mathcal P}}
\newcommand{\calQ}{{\mathcal Q}}
\newcommand{\calU}{{\mathcal U}}
\newcommand{\calV}{{\mathcal V}}
\newcommand{\calX}{{\mathcal X}}
\newcommand{\calY}{{\mathcal Y}}
\newcommand{\calZ}{{\mathcal Z}}
\renewcommand{\(}{\left(}
\renewcommand{\)}{\right)}
\newcommand{\ZZ}{\mathbb{Z}}
\newcommand{\QQ}{\mathbb{Q}}
\newcommand{\Zp}{\mathbb{Z}_p}
\newcommand{\RR}{\mathbb{R}}
\newcommand{\CC}{\mathbb{C}}
\newcommand{\FF}{\mathbb{F}}
\newcommand{\Fp}{\mathbb{F}_p}
\newcommand{\Fv}{\mathbb{F}_v}
\renewcommand{\AA}{\mathbb{A}}
\newcommand{\PP}{\mathbb{P}}
\newcommand{\bfa}{\mathbf{a}}
\newcommand{\bfx}{\mathbf{x}}
\newcommand{\sA}{\mathcal{A}}
\newcommand{\sD}{\mathcal{D}}
\newcommand{\sO}{\mathcal{O}}
\newcommand{\sQ}{\mathcal{Q}}
\newcommand{\sU}{\mathcal{U}}
\newcommand{\sV}{\mathcal{V}}
\newcommand{\sX}{\mathcal{X}}
\newcommand{\scrA}{\mathscr{A}}
\newcommand{\scrB}{\mathscr{B}}
\newcommand{\scrX}{\mathscr{X}}
\newcommand{\scrY}{\mathscr{Y}}
\DeclareMathOperator{\inv}{inv}
\renewcommand{\epsilon}{\varepsilon}
\newcommand{\nequiv}{\not\equiv}
\DeclareMathOperator{\valp}{v_{\it p}}
\renewcommand{\val}{v}
\DeclareMathOperator{\cc}{\textup{C}}
\DeclareMathOperator{\dd}{\textup{D}}
\DeclareMathOperator{\str}{\textup{st}}
\DeclareMathOperator{\ns}{\textup{ns}}
\DeclareMathOperator{\tr}{\textup{ns}}
\DeclareMathOperator{\fin}{\textup{fin}}
    \DeclareFontFamily{U}{wncy}{}
    \DeclareFontShape{U}{wncy}{m}{n}{<->wncyr10}{}
    \DeclareSymbolFont{mcy}{U}{wncy}{m}{n}
    \DeclareMathSymbol{\Sh}{\mathord}{mcy}{"58} 
\begin{document}
\begin{abstract}
We study local-global principles for two notions of semi-integral points, termed Campana points and Darmon points. In particular, we develop a semi-integral version of the Brauer--Manin obstruction interpolating between Manin's classical version for rational points and the integral version developed by Colliot-Th\'el\`ene and Xu. We determine the status of local-global principles, and obstructions to them, in two families of orbifolds naturally associated to quadric hypersurfaces. Further, we establish a quantitative result measuring the failure of the semi-integral Brauer--Manin obstruction to account for its integral counterpart for affine quadrics.
\end{abstract}
\maketitle
\setcounter{tocdepth}{1}
\tableofcontents

\section{Introduction}

The theory of Campana points has recently received considerable attention thanks to its appeal from both geometric and arithmetic perspectives. Geometrically, the theory offers a bridge between rational and integral points, and with it the hope of shedding new light on problems in the integral realm. Arithmetically, it provides a framework for studying \emph{powerful} solutions to equations. Recall that, given an integer $m \geq 2$, an integer $n$ is \emph{$m$-full} if $p^m$ divides $n$ for all primes $p$ dividing $n$. Several recent works have developed and explored rational points concepts in the Campana setting, including a Manin-type conjecture \cites{PSTVA, STR, SHU1, SHU2} as well as the Hilbert property and weak approximation \cite{NS}.

In this paper we formulate and study the Hasse principle (Definition~\ref{def:CHP}) and strong and weak approximation (Definition~\ref{def:CWA}) for Campana points and a related type of rational points which we name \emph{Darmon points} (Definition \ref{def:dar}), coming from Darmon's study of \emph{$M$-curves} \cite{DAR}. While Campana points meet the orbifold divisor with zero or high intersection multiplicity relative to its weights (see Definition \ref{def:cam}), for Darmon points, the multiplicity is a multiple of the relevant weight. Darmon points then correspond to perfect power solutions. We shall refer to Campana points and Darmon points collectively as \emph{semi-integral points}.

Our primary focus is the development of a Brauer--Manin obstruction for semi-integral points (Section \ref{section:BMO}), which allows us to study new notions of local-global principles for semi-integral points. A key ingredient is the introduction of a new natural choice of adelic space admitting a well-defined Brauer--Manin pairing and notions of strong and weak approximation in the semi-integral setting. An important feature of the semi-integral Brauer--Manin obstruction is that it interpolates between Manin's classical version for rational points and the integral Brauer--Manin obstruction of Colliot-Th\'el\`ene and Xu \cite{CTX}*{\S1} (Remark~\ref{rem:BMOinterpolation}), allowing us to highlight similarities and differences between the semi-integral obstruction and its rational and integral counterparts.

A powerful tool in studying the Brauer--Manin obstruction for rational points is Harari's formal lemma \cite{HARA}. Roughly, the lemma tells us that, given a Brauer class $\sA$ on an open subvariety $U$ of a variety $X$ not coming from a Brauer class on $X$, the Brauer--Manin pairing with $\sA$ is non-constant at infinitely many places. It is often used to study properties related to strong approximation (e.g. \cite[Prop.~2.6]{CTX13}). As a first application of the semi-integral Brauer--Manin obstruction, we establish an analogue of Harari's formal lemma for semi-integral points in Theorems~\ref{thm:HarariA} and \ref{thm:HarariB}. 

Let $X$ be a smooth proper variety over a number field $K$ and $\{D_\alpha\}_{\alpha \in\scrA}$ be the set of all irreducible effective Cartier divisors $D_\alpha\subset X$. Let $D = \sum_{\alpha \in \scrA} \epsilon_\alpha D_{\alpha}$ be a $\QQ$-divisor on $X$, where $\epsilon_\alpha = 1 - 1/m_\alpha$ for weights $m_\alpha \in \ZZ_{\ge 1}$ with $m_\alpha \neq 1$ for only finitely many $\alpha$. Set $D_{\red} := \bigcup_{\epsilon_\alpha \neq 0} D_\alpha$ and $U := X \setminus D_{\red}$. Further, set $D_{\inf} := \bigcup_{\epsilon_\alpha = 1} D_\alpha$, $D_{\fin} := D_{\red} \setminus D_{\inf}$, and for $\varepsilon_\alpha \neq 0, 1$, set $D_{\alpha, \fin} := D_\alpha \setminus D_{\inf}$ (Definition~\ref{def:D-inf-and-fin}). Finally, set $V := X \setminus D_{\inf}$.

Given a finite set $S$ of places of $K$ containing all archimedean places and an $\O_S$-model $(\calX,\calD)$ of the Campana orbifold $(X,D)$ (Definitions~\ref{def:orbifold} and \ref{def:model}), we will denote by $(\calX,\calD)^{\cc}(\O_S)$ and $(\calX,\calD)^{\dd}(\O_S)$ the sets of Campana and Darmon points respectively. See Definition \ref{def:adelicCampana} for the notion of the semi-integral $T$-adelic space. As we shall see, semi-integral points on $U$ behave very differently from those on $D_{\fin}$. Let $(\sX,\sD)_{\str}^{\cc}(\AA_{K,S})$ be the set of all adelic Campana points on $U$, and let $(\sX,\sD)_{\tr}^{\cc}(\AA_{K,S})$ be the adelic Campana points lying on $D_{\fin}$. Define $(\sX,\sD)_{\str}^{\dd}(\AA_{K,S})$ and $(\sX,\sD)_{\tr}^{\dd}(\AA_{K,S})$ analogously (Definition~\ref{def:adelicCampana}). Strict points pair naturally with $\Br U$, while non-strict points feature in pairings with each $\Br D_{\alpha,\fin}$, $\epsilon_\alpha \neq 0,1$ (Definition~\ref{def:SIBMO}). Since $X$ is smooth, we have $\Br V \subset \Br U$ \cite[Thm.~3.5.7]{CTS2}, and all adelic points pair naturally with $\Br V$. Lastly, denote by $\Br (X, D)$ the set of elements of $\Br U$ whose residue along $D_{\alpha,\fin}$ (as a divisor of $V$) is $m_\alpha$-torsion for each $m_\alpha \neq 1$ (Definition~\ref{def:BrXD}). We then have
\[
  \begin{split}
  (\sX, \sD)_{\str}^{\cc}(\AA_{K,S})^{\Br} &\subseteq (\sX,\sD)_{\str}^{\cc}(\AA_{K,S})^{\Br V} \subseteq (\sX, \sD)_{\str}^{\cc}(\AA_{K,S}), \\ 
  (\sX, \sD)_{\str}^{\dd}(\AA_{K,S})^{\Br} &\subseteq (\sX,\sD)_{\str}^{\dd}(\AA_{K,S})^{\Br(X, D)} \subseteq (\sX, \sD)_{\str}^{\dd}(\AA_{K,S}). 
  \end{split}
\]

Recall that an $n$-torsion Brauer element $\sA$ is \emph{prolific at $v$} if its local invariant map at $v$ takes all possible values $k/n$, $0 \leq k \leq n-1$. We call $\sA$ \emph{prolific at $T$} if $\sum_{v \in T} \inv_v \sA$ takes all possible values $k/n$, $0 \leq k \leq n-1$ for some choice of adeles in the corresponding adelic set. We call $\sA$ prolific at $T$ with respect to $U$ if it is prolific at the set of strict semi-integral points. Our first result shows that extra elements in $\Br U$ do not obstruct the Hasse principle, but they almost always obstruct strong approximation.

\begin{theorem} \label{thm:main0}
Let $(X,D)$ be a Campana orbifold with $\O_S$-model $(\calX,\calD)$.
  \begin{enumerate}[label=\emph{(\roman*)}] 
    \item \label{m0i} There is no Brauer--Manin obstruction to the Campana Hasse principle if \\ $(\calX,\calD)_{\str}^{\cc}(\A_{K, S})^{\Br V}\neq\emptyset$ and $\Br U / \Br K$ is finite.
    
    \item \label{m0ii} Assume that $D_{\alpha,\fin}$ is regular for all $\epsilon_{\alpha} \neq 0,1$. There is no Brauer--Manin obstruction to the Darmon Hasse principle if $(\calX,\calD)_{\str}^{\dd}(\A_{K, S})^{\Br (X, D)}\neq\emptyset$ and $\Br U / \Br K$ is finite.

    \item \label{m0iii} Campana strong approximation off $T$ fails if $(\calX, \calD)_{\str}^{\cc}(\AA_{K, S}^T) \neq \emptyset$ and there exists $\sA \in \Br U \setminus \Br V$ such that $\sA$ is not prolific at $T$ with respect to $U$.
    
    \item \label{m0iv} Darmon strong approximation off $T$ fails if $(\calX, \calD)_{\str}^{\dd}(\AA_{K, S}^T) \neq \emptyset$ and there  exists $\sA \in \Br U \setminus \Br (X,D)$ such that $\sA$ is not prolific at $T$ with respect to $U$.
  \end{enumerate}
\end{theorem}

Note that, if $D_{\inf} = \emptyset$ and $\Br X=\Br K$ (e.g.\ if $X$ is rational) and $\Br U/\Br K$ is finite, then (i) tells us that there is no Brauer--Manin obstruction to the Campana Hasse principle for strict points on $(\calX,\calD)$. The finiteness conditions in (i) and (ii) are necessary in order to apply Harari's formal lemma. The requirements on $\sA$ not being prolific are also necessary as demonstrated in Proposition~\ref{prop:values-of-inv-at-T}. Provided that $T$ consists of large places, any element of the smaller Brauer groups $\Br V$ in (iii) or $\Br (X,D)$ (if $D_{\alpha,\fin}$ is regular for all $\epsilon_{\alpha} \neq 0,1$) in (iv) has constant invariant map at places of $T$ by Lemma~\ref{lema:trivpairing}, so whether such elements play a role in obstructing Campana or Darmon strong approximation off $T$ amounts to understanding the behaviour of their local invariant maps at finitely many places. This is not true for a general Brauer element. Theorem~\ref{thm:HarariA} shows that we may allow the finite set $T$ in Theorem~\ref{thm:main0} to contain arbitrary large places if they are chosen carefully. As an application of Theorem \ref{thm:main0}, we show that the Brauer--Manin pairing can still detect the existence or lack of an obstruction to the Hasse principle and strong approximation for semi-integral points even when the local invariant map of a Brauer group element is non-constant at infinitely many places (see Theorem \hyperref[thm:main3]{\ref*{thm:main3}\emph{\ref*{m3i}}}).

\begin{remark}
In the framework of the Brauer--Manin obstruction for integral points on arbitrary varieties or for rational points on proper varieties, the local invariant map for each element of the Brauer group vanishes outside a finite set of places. However, for Campana orbifolds, if one of the weights $m_\alpha$ and the order of one of the Brauer elements are coprime, a behaviour akin to Harari's formal lemma \cite[Thm.~13.4.1]{CTS2} is present on the level of Darmon points. This is shown in Theorem~\ref{thm:HarariA} for general orbifolds and is exploited crucially in the proofs of Theorems~\ref{thm:main0} and \ref{thm:main3}.
\end{remark}

Of particular interest is the appearance of the ``stacky'' Brauer group $\Br\left(X,D\right)$ (see Remark \ref{rmk:rootstack}), which is indicative of the close relationship between Darmon points and stacky points (as evidenced by Example \ref{ex:dar}), the latter being a topic of significant interest in several recent papers \cites{BP, LW23, NX1, NX2, SAN23}.

\subsection*{Two families of orbifolds}

In the second part of the paper, we will use the semi-integral Brauer--Manin obstruction alongside techniques from birational geometry and analytic number theory to study the Hasse principle and approximation properties for two families of orbifolds associated to quadric hypersurfaces. As was demonstrated in \cite{CTX}, these provided prototypical examples to illustrate the features of the integral Brauer--Manin obstruction. Our goal here is to do the same for semi-integral points.

Let $n \in \ZZ_{\ge 0}$ and let $Q \subset \PP^{n+1}$ be a smooth quadric hypersurface. Let $H\subset \P^{n+1}$ be a hyperplane, and set $\Delta := Q \cap H$. Let $\calQ$ be the closure of $Q$ in $\P^{n+1}_{\O_S}$, and let $\DDelta$ be the closure of $\Delta$ in $\sQ$. Letting $m \in \mathbb{Z}_{\geq 2}$, define the $\Q$-divisors $Q_m:=\(1-1/m\)Q$ on $\mathbb{P}^{n+1}$ and $\Delta_m:=\(1-1/m\)(Q\cap H)$ on $Q$. Define $\sQ_m$ and $\DDelta_m$ analogously. Our focus will be on the two orbifolds $(Q, \Delta_m)$ and $(\P^{n+1},Q_m)$.

Taking $m=1$ in each family, all local-global principles hold. Indeed, all local-global principles then become those for rational points on the ambient variety, and $\P^{n+1}$ and $Q$ satisfy strong approximation and the Hasse principle, hence our stipulation that $m \geq 2$.

The first part of this study concerns weak approximation for semi-integral points on the aforementioned orbifolds. More precisely, we are interested in \emph{Campana/Darmon weak approximation} (Definition \ref{def:CWA}). We begin with the following result on weak approximation when $Q$ plays the role of orbifold divisor.

\begin{theorem} \label{thm:main1}
The orbifold $(\mathbb{P}^{n+1}_{\O_S},\calQ_m)$
\begin{enumerate}[label=\emph{(\roman*)}]
\item \label{m1i} satisfies Campana weak approximation and $(\P^{n+1}_{\O_S},\calQ_m)^{\cc}(\O_S)\neq\emptyset$;
\item \label{m1ii} fails Darmon weak weak approximation if $m$ is even.
\end{enumerate}
\end{theorem}

For even $m$, we show that $(\mathbb{P}^{n+1}_{\O_S},\calQ_m)^{\dd}(\O_S)$ is in fact thin (Proposition \ref{prop:Darmonthin}), implying the failure of Darmon weak weak approximation. We also show in Example \ref{ex:DWWAfail} that Darmon weak approximation can fail for odd $m$. The case $n=0$ of Theorem \hyperref[thm:main1]{\ref*{thm:main1}\emph{\ref*{m1i}}} was established in \cite{NS}*{Prop.~3.15}; prior to the present work, this was the only example of Campana weak approximation for projective space with a non-linear orbifold divisor. 

We then derive the following result for $Q$ playing the role of ambient variety. As is well-known, a quadric hypersurface with a rational point is birational to projective space; we exploit this fact to pass from Campana points for projective space with a quadric divisor to Campana points for a quadric with a linear orbifold divisor (a hyperplane section).

\begin{theorem} \label{thm:main2}
The orbifold $(\calQ,\DDelta_m)$ satisfies Campana weak approximation for all $n\geq 1$.
\end{theorem}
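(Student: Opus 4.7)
The plan is to deduce Theorem \ref{thm:main2} from Theorem \ref{thm:main1}(i) via stereographic projection. We may assume $(\calQ, \DDelta_m)^{\cc}(\A_K) \neq \emptyset$, otherwise Campana weak approximation holds vacuously. Then $Q(\A_K) \neq \emptyset$, and the Hasse principle for smooth quadrics gives $Q(K) \neq \emptyset$; since $Q(K)$ is Zariski-dense in $Q$, we may choose $P \in (Q \setminus H)(K)$. Selecting coordinates so that $P = (0:\dots:0:1)$ and $\P^n = \{x_{n+1} = 0\}$, we write $Q = \{F + x_{n+1} L = 0\}$ with $F$ quadratic and $L$ linear in $x_0, \dots, x_n$, and $H = \{x_{n+1} = M\}$ with $M$ linear. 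The stereographic projection $\pi_P$ sending $(x_0:\dots:x_{n+1})$ to $(x_0:\dots:x_n)$ has birational inverse $\phi(y) = (y_0 L(y) : \dots : y_n L(y) : -F(y))$; these restrict to inverse isomorphisms $Q \setminus (Q \cap T_P Q) \cong \P^n \setminus \{L = 0\}$. A direct computation yields $\pi_P(\Delta) = \{F + LM = 0\} =: Q'$, a quadric hypersurface in $\P^n$, and $\pi_P|_\Delta : \Delta \to Q'$ is an isomorphism.

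By Theorem \ref{thm:main1}(i), applied with $n$ replaced by $n-1$, the orbifold $(\P^n_{\O_S}, \calQ'_m)$ satisfies Campana weak approximation. Given Campana local data $(P_v)_{v \in T}$ on $(\calQ, \DDelta_m)$, we use weak approximation for $Q$ to choose $P \in (Q \setminus H)(K)$ additionally with $P_v \notin T_P Q$ for each $v \in T$, so that $\pi_P$ is a local isomorphism near each $P_v$. Then $(\pi_P(P_v))_{v \in T}$ forms Campana local data for $(\P^n, \calQ'_m)$, and Theorem \ref{thm:main1}(i) provides a global Campana point $y \in (\P^n, \calQ'_m)^{\cc}(\O_S)$ arbitrarily close to $\pi_P(P_v)$ at each $v \in T$. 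By continuity of $\phi$ on its domain, $\phi(y) \in Q(K)$ approximates each $P_v$ at $v \in T$.

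The main obstacle is verifying that $\phi(y)$ is a Campana point on $(\calQ, \DDelta_m)$ at places $v$ outside $S \cup T$. The map $\phi$ contracts $\{L = 0\} \setminus Z$, where $Z = \{L = F = 0\}$, to the point $P$. For $v$ with $y \bmod v \notin \{L = 0\}$, $\phi$ is a local isomorphism and the Campana condition transfers. For $v$ with $y \bmod v \in \{L = 0\} \setminus Z$, the reduction $\phi(y) \bmod v = P$ is not in $\Delta$, so the Campana condition holds trivially. The problematic case is $y \bmod v \in Z$, a codimension-two locus: here a valuation-theoretic computation gives intersection multiplicity $v(F+LM) - \min(v(L), v(F))$ of $\phi(y)$ with $\Delta$ at $v$, which can lie in $(0, m)$ even when $y$ is Campana. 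To overcome this, we iterate the application of Theorem \ref{thm:main1}(i): for any given $y$ the set $\{v \notin S \cup T : y \bmod v \in Z\}$ is finite, and reapplying Theorem \ref{thm:main1}(i) with an enlarged approximation set that includes these bad places, prescribing at them auxiliary Campana local data whose reductions lie outside $Z$ (which exist since $Z$ has codimension two in $\P^n$), produces a global Campana point $y$ with $y \bmod v \notin Z$ for all $v$ outside $S \cup T$. The Campana property of $\phi(y)$ then follows and yields Campana weak approximation for $(\calQ, \DDelta_m)$.
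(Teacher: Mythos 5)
Your overall route is essentially the paper's: project from a rational point $z\in(Q\setminus H)(K)$ to $\mathbb P^n$, observe that $\Delta$ maps isomorphically to a quadric $Q'=\{F+LM=0\}\subset\mathbb P^n$, invoke Theorem \ref{thm:main1}(i) for $(\mathbb P^n,Q'_m)$, and pull back. You also correctly isolate, via an explicit valuation computation, the one place where the transfer of the Campana condition can fail: at a place $v$ where the approximating $y\in\mathbb P^n(K)$ reduces into the codimension-two locus $Z=\{L=F=0\}$ (this is the centre $B$ of the blowup $\Bl_zQ\to\mathbb P^n$ in the paper's argument), the intersection multiplicity of $\phi(y)$ with $\Delta$ is $v(F+LM)-\min(v(L),v(F))$ and can land strictly between $0$ and $m$.

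The gap is in the proposed remedy. Recording the finitely many places where $y\bmod v\in Z$, adjoining them to the approximation set, and reapplying Theorem \ref{thm:main1}(i) with conditions avoiding $Z$ there does \emph{not} produce a point whose reduction avoids $Z$ at all $v\notin S\cup T$: weak approximation controls the replacement only at the finitely many places you prescribe, so the new Campana point will avoid $Z$ at the old bad places but can reduce into $Z$ at a fresh, disjoint set of places, and iterating comes with no termination guarantee. The paper removes the obstruction structurally rather than place-by-place, by never leaving a well-chosen line in $\mathbb P^n$: it fixes an auxiliary $w_1\in(T_zQ\cap\mathbb P^n)(K)$ lying on the hyperplane $\{L=0\}$ but off $B=Z$, enlarges $T$ once so that $\overline{w_1}$ misses $\calB_T$, chooses $w_2$ an $\O_T$-point of $\mathbb A^n$ approximating the projected local data, and applies Theorem \ref{thm:main1}(i) to the induced orbifold on the line through $w_1$ and $w_2$. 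Since that line meets $\{L=0\}$ only at $w_1$, the resulting Campana point automatically misses $\calB_T$, and Proposition \ref{prop:ebom} then carries the Campana property through $\Bl_zQ=\Bl_B\mathbb P^n$ down to $(\calQ,\DDelta_m)$. To repair your argument you would need a comparable device producing a single $y$ that a priori misses $Z$ at all places outside a fixed finite set — for instance by restricting to such a line — rather than an unbounded sequence of local corrections.
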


The proof of Theorem~\ref{thm:main2} relies upon the behaviour of Campana points under birational transformations. We develop a new method in Proposition \ref{prop:ebom} which establishes sufficient conditions for the preservation of both semi-integrality and integrality of points under certain birational maps. This key result allows us to deduce Theorem \ref{thm:main2} by transferring to the setting of Theorem \ref{thm:main1}, swapping the degrees of ambient variety and orbifold divisor. An interesting direction for future research is to consider how one may exploit and expand upon this result in order to bring to bear the tools of birational geometry on problems concerning semi-integral and integral points. Note that, in this case, such a transfer principle fails for Darmon points between the two families. Indeed, while Theorem~\hyperref[thm:main1]{\ref*{thm:main1}\emph{\ref*{m1ii}}} tells us that Darmon weak approximation fails in the even-weight case for $(\P^{n+1}_{\O_S},\calQ_m)$, this property may hold in the even-weight case for $(\calQ,\DDelta_m)$ (Example~\ref{ex:DWA}).

Our next result concerns the status of the remaining local-global principles for $(\calQ,\DDelta_m)$. For any finite set $T$ of places of $K$, we denote by $(\sQ, \DDelta_m)^*(\AA_{K, S \cup T}^{T})^{\Br}$ the projection of $(\sQ, \DDelta_m)^*(\A_{K, S \cup T})^{\Br}$ to $(\sQ, \DDelta_m)^*(\A_{K, S}^T)$, as explained in Definition~\ref{def:Br-T-adeles}.

\begin{theorem} \label{thm:main3}
Assume that $n \ge 2$ and $\Delta = Q \cap H$ is a smooth divisor. Let $d\in K$ be the discriminant of a quadratic form defining $Q$. Let $v_0$ be a place where $\Delta(K_{v_0})\neq\emptyset$.

\begin{enumerate}[label=\emph{(\roman*)}]
\item \label{m3i} The set $(\sQ, \DDelta_m)^*(\sO_{S\cup\{v_0\}})$ is dense in $(\sQ, \DDelta_m)^*(\AA_{K, S \cup \{v_0\}}^{\{v_0\}})^{\Br}$ for $* \in \{C,D\}$.
    
\item \label{m3ii} Assume that either $n\geq3$ or $d$ is a square. Then $\Br (Q \setminus \Delta) / \Br K=0$ and $(\sQ, \DDelta_m)$ satisfies both Campana and Darmon strong approximation off $\{v_0\}$. 
    
\item \label{m3iii} Assume that $n=2$ and $d$ is not a square. Then $\Br (Q \setminus \Delta) / \Br K\simeq\Z/2\Z$, generated by some class $\sA$; since $1 < m < \infty$, there exists a density-$1/2$ subset $\Omega_{K,\sA}\subset \Omega_K$ such that the following hold for any finite subset $T\subset \Omega_{K,\sA}$:

	\begin{enumerate}[label=\emph{(\alph*)}]
	\item \label{m3iiia} If $(\sQ, \DDelta_m)_{\str}^{\cc}(\AA_{K, S}^T) \neq \emptyset$, then Campana strong approximation off $T$ fails;
      
	\item \label{m3iiib} If $m$ is odd and $(\sQ, \DDelta_m)_{\str}^{\dd}(\AA_{K, S}^T) \neq \emptyset$, then Darmon strong approximation off $T$ fails but if $v_0 \in S$ the Darmon Hasse principle holds.
	\end{enumerate}

\end{enumerate}
\end{theorem}

If $\Br (Q \setminus \Delta) / \Br K$ is non-trivial with generator $\sA$, then $\Omega_{K,\sA}$ above is the set of places of $K$ where the local invariant map for $\sA$ is constant. It is independent of the choice of $\sA$, since any two Brauer elements generating $\Br (Q \setminus \Delta) / \Br K$  differ by a constant algebra. We will see in the proof of Theorem~\ref{thm:main3} that $\Omega_{K,\sA}$ indeed has density $1/2$ in $\Omega_K$, since it differs by finitely many places from the set of places for which $d$ is a quadratic residue. 

For integral points, an additional so-called \emph{obstruction at infinity} may occur further to the Brauer--Manin obstruction. To avoid the presence of an obstruction at infinity, one normally assumes that the real points are not compact (cf. \cite[Thm.~6.1]{CTX}). The exclusion of a place $v_0$ such that $\Delta(K_{v_0}) \neq \emptyset$ in Theorem~\ref{thm:main3} is an analogue of this assumption, and it guarantees that the Brauer--Manin obstruction developed in Section~\ref{section:BMO} explains all failures of the Campana/Darmon Hasse principle or Campana/Darmon strong approximation away from $v_0$. This follows from Proposition~\ref{prop:BMOequiv} as we explain in the proof of Theorem~\ref{thm:main3}. Proposition~\ref{prop:BMOequiv} implies that for general orbifolds, the closure of the set of strict Campana/Darmon points being dense in their adelic space is equivalent to the analogous statement for integral or rational points.

\begin{remark}
If $n \ge 1$ and $\Delta = Q \cap H$ is singular, then $\Br(Q \setminus \Delta) \cong \Br K$. This is proved in Proposition \ref{prop:main3non-smooth-b} together with the Hasse principle and strong approximation for strict Campana and Darmon points off any finite set of places $T \subset S$ for $(\sQ,\DDelta_m)$.
\end{remark}

Theorems~\ref{thm:main1} and \ref{thm:main2} tell us that both families of orbifolds satisfy the Campana Hasse principle and Campana weak approximation for any choice of weight. However, the Hasse principle for Darmon points of even weight for the orbifolds of Theorem \ref{thm:main2} may fail due to a Brauer--Manin obstruction (see Example~\ref{ex:quadrics-even}). Such failures imply that there are counterexamples to Darmon weak and strong approximation in this family, since each property together with non-empty adelic space implies the Darmon Hasse principle. 

For the existence of $S$-integral points on $\sQ \setminus \DDelta$, it is necessary that $(\sQ \setminus \DDelta)(\AA_{\sO_S}) \neq \emptyset$, a condition which implies that $(\sQ, \DDelta_m)_{\str}^{\dd}(\AA_K) \neq \emptyset$. Moreover, as $S$-integral points are Darmon points, the lack of strict Darmon points on $(\sQ, \DDelta_m)$ for some $m \in \ZZ_{\ge 1}$ forces $\sQ \setminus \DDelta$ to have no $S$-integral points. One may thus pose the following natural question.

\begin{question}
  \label{qes:DHPtoIHP}
  Does the Hasse principle for Darmon points on $(\sQ, \DDelta_m)$ not lying on $\Delta$ for all finite $m \ge 1$ imply the Hasse principle for $S$-integral points on $\sQ \setminus \DDelta$?
\end{question}

It is conceivable that, at places of bad reduction, local Darmon points close to $\Delta$ in the even-weight case may come from adelic points in $(\sQ, \DDelta_m)^{\dd}(\AA_K)^{\Br}$, while there may be a Brauer--Manin obstruction to the existence of integral points on $\sQ \setminus \DDelta$. This suggests that the answer to the above question is negative. We give an example which shows that the answer is negative for cubic surfaces with a plane section (Example \ref{ex:DMBO-does-not-imply-IBMO}).

Note that even if $(\sQ, \DDelta_m)^{\dd}(\O_S) \cap (Q \setminus \Delta)(K) \neq \emptyset$ for all $m \in \Z_{\ge 1}$, it may be that $(\sQ \setminus \DDelta)(\AA_{\O_S}) = \emptyset$. Indeed, the quadric $3(x - y)(x+ y) = (t - 4z)(t + 4z)$ with orbifold divisor $\Delta_m = \(1-1/m\)Z\left(t\right)$ has no $2$-adic integral points, but $[x:y:z:t] = [1:1:4^{m-1}:4^m]$ is a Darmon point away from $t = 0$ for any $m \in \ZZ_{\ge 1}$.

We now specialise to diagonal quadrics to explore Question~\ref{qes:DHPtoIHP} in greater depth. In our final result, we quantify the number of integral Hasse principle failures such that the Darmon Hasse principle holds for any weight. One may think of this as measuring the failure of the Darmon Brauer--Manin obstruction to account for its integral counterpart. 

For any quadruple $(a_1, a_2, a_3, n) \in \ZZ^4$ with $a_1a_2a_3n \neq 0$ and $a_1, a_2$ and $a_3$ not all of the same sign, which is assumed to avoid the presence of an obstruction at infinity, let $Q_{\bfa, n}$ be the quadric $a_1x^2 + a_2y^2 + a_3z^2 - nt^2 = 0$ , and let $\Delta_m$ be the $\Q$-divisor $\(1-1/m\)Z(t)$. Let $(\calQ_{\bfa, n},\DDelta_m)$ be the $\Z$-model of $(Q_{\bfa, n},\Delta_m)$ with $\calQ_{\bfa,n}$ the surface in $\P^3_\Z$ defined by the equation above divided through by $\gcd\(a_1, a_2, a_3,n\)$. Then for any $B \in \R_{\ge 1}$, set
\[
  N_n(B) := \#\left\{ 0 < |a_1|, |a_2|, |a_3| \le B: 
  \begin{array}{l}
    a_1, a_2, a_3 \text{ not all the same sign,}\\ 
    (\sQ_{\bfa, n} \setminus \DDelta) \text{ fails integral Hasse principle but} \\
    (\sQ_{\bfa, n}, \DDelta_m)_{\str}^{\dd}\(\ZZ\) \neq \emptyset \text{ for all } m \in \Z_{\ge 1}.
  \end{array} 
  \right\}.
\] 
Our last result features upper and lower bounds for this quantity. 

\begin{theorem}
  \label{thm:main4}
  Fix a non-zero integer $n$. Then, as $B$ goes to infinity,
  \[
    B^{3/2}\log B \ll_n N_n(B) \ll_n B^{3/2}\(\log B\)^{3/2}.
  \]
\end{theorem}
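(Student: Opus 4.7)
The plan is to separate the two conditions defining $N_n(B)$: the integral Hasse principle failure for $\sQ_{\bfa,n}\setminus\DDelta$ is the restrictive condition governing the upper bound, while the Darmon Hasse principle for every weight $m\ge 2$ must be verified for a suitable lower-bound family. Immediately, $N_n(B)\le N_n^{\mathrm{int}}(B)$, where $N_n^{\mathrm{int}}(B)$ counts triples $\bfa$ with $|a_i|\le B$, not all of the same sign, such that $a_1X^2+a_2Y^2+a_3Z^2=n$ has local integral solutions at every prime but no integer solution.

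For the upper bound, the set of $\bfa$ with local integral solubility everywhere is determined by residues modulo a bounded modulus depending only on $n$, while the integral Brauer--Manin obstruction is controlled by a finite family of quaternion classes in $\Br(Q\setminus\Delta)/\Br\QQ$ whose local invariants admit explicit Hilbert-symbol expressions in the coefficients. A dyadic decomposition of the box $|a_i|\le B$ together with summation over the arithmetic progressions that capture the obstruction---in the spirit of existing counts of Hasse principle failures for ternary quadratic forms---yields $N_n^{\mathrm{int}}(B)\ll_n B^{3/2}(\log B)^{3/2}$.

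For the lower bound, we construct an explicit family of size $\gg_n B^{3/2}\log B$ contained in $N_n(B)$. Fix an auxiliary prime $p_0$ with suitable congruence properties relative to $n$, and restrict to triples $\bfa$ satisfying prescribed congruence conditions modulo $p_0$ and a small fixed modulus depending on $n$ which enforce: (i) local integral solubility of $a_1X^2+a_2Y^2+a_3Z^2=n$ at every prime; (ii) a non-trivial local invariant at $p_0$ for a generator of $\Br(Q\setminus\Delta)/\Br\QQ$, producing integral Brauer--Manin obstruction; and (iii) existence of a rational point $P_0\in Q_{\bfa,n}(\QQ)$ with $t(P_0)\neq 0$. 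Conditions (i) and (ii) give integer Hasse principle failure. For the Darmon Hasse principle of arbitrary weight, we use the rational parametrization $\PP^2\dashrightarrow Q_{\bfa,n}$ through $P_0$ and, by substitution, exhibit for each $m\ge 2$ a rational point whose primitive integer representative has $t$-coordinate a perfect $m$-th power.

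The main obstacle is synchronizing the Darmon Hasse principle over all weights $m$ uniformly across the lower-bound family. One must show that the pullback of the coordinate $t$ under the chosen parametrization attains values in $\{k^m : k\in\ZZ\}$ for every $m$, which requires a careful choice of base-point $P_0$ and explicit analysis of the resulting rational function, ruling out the possibility that its image is confined to a thin subset of $\QQ$. The remaining $(\log B)^{1/2}$ gap between the upper and lower bounds reflects the wider variety of prime supports available to the Brauer--Manin obstruction in the full upper-bound count, compared with the single prime $p_0$ used to drive the lower-bound construction.
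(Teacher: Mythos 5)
Your upper bound is essentially the paper's: $N_n(B)$ is dominated by the count of integral Hasse principle failures, which the paper simply cites from Santens's theorem, and your heuristic for why $B^{3/2}(\log B)^{3/2}$ emerges is consistent with that result. The decisive divergence is in the lower bound, and there your proposal has a genuine gap.

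You propose to verify the Darmon Hasse principle for every weight $m\ge 2$ by directly constructing, via the rational parametrization of $Q_{\bfa,n}$ through a base point $P_0$ with $t(P_0)\neq 0$, rational points whose primitive integer representatives have $t$-coordinate an exact $m$-th power. You then flag as "the main obstacle" the need to rule out that the $t$-values of this parametrization form a thin set. This obstacle is real and is not resolved: for a general diagonal ternary quadric the $t$-coordinate of the secondary intersection is a nontrivial rational function in the parameters, and there is no reason it should hit $m$-th powers for every $m$ simultaneously, especially while the other coordinates remain coprime. Indeed, the paper itself proves (Proposition~\ref{prop:Darmonthin}) that in a closely related setting even-weight Darmon points are thin. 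You have identified the difficulty but offered no mechanism to overcome it, so the lower bound is not established by your argument.

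The paper takes a fundamentally different route here, and it is the one that makes the theorem go through. Rather than constructing global Darmon points by hand, it exhibits a concrete four-parameter family (a scaled $5a b^2 x^2 - 25 a d^2 y^2 + 16 c^2 z^2 = t^2$ with congruence conditions on $a,b,c,d$) and shows three things: the integral adelic space of $\sU = \sQ\setminus\DDelta$ is nonempty; the generator $(1-4cz,5)$ of $\Br(Q\setminus\Delta)/\Br\QQ$ has local invariant $1/2$ precisely at $p=5$, giving an integral Brauer--Manin obstruction; and yet there exists a strict $5$-adic Darmon point (close to $\DDelta$, with $v_5(t)=m$) on which the invariant vanishes. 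From the vanishing it follows that there is no Brauer--Manin obstruction to strict Darmon points. The crucial step you are missing is that the paper then invokes Proposition~\ref{prop:BMOequiv} together with \cite[Thm.~6.3]{CTX} (integral strong approximation with Brauer--Manin obstruction for affine quadric surfaces) to convert "no obstruction" into "strict Darmon points exist for every $m\ge 2$." This is the entire point of the semi-integral Brauer--Manin machinery developed in Section~\ref{section:BMO}: it replaces the intractable direct construction of $m$-th power values by a local computation. Without some substitute for Proposition~\ref{prop:BMOequiv}, your plan does not close. The counting of the parameter family, which the paper carries out with M\"obius expansions, Dirichlet characters mod $40$, and Abel summation, is also absent from your sketch, though that part is routine once the family is fixed.
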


For fixed non-zero $n \in \ZZ$, let $N'_n\(B\)$ denote the number of failures of the integral Hasse principle in the family $\sQ_{\bfa, n} \setminus \DDelta$ considered above with $0 < |a_1|, |a_2|, |a_3| \le B$. This quantity was first studied in \cite[Thm.~1.2, Cor.~1.4]{MIT} where similar bounds with different powers of $\log B$ are established. These bounds were later improved in \cite[Thm~1.1]{SAN}. As $N_n(B) \le N_n'(B)$, the upper bound here follows from the upper bound in \cite[Thm~1.1]{SAN}. An interesting feature of the lower bound of Theorem~\ref{thm:main4} is that it is a further improvement of the lower bound in \cite[Thm~1.1]{SAN}. Thus Theorem~\ref{thm:main4} produces almost the expected magnitude of $N_n(B)$ (and also of $N_n'(B)$ when combined with \cite[Thm~1.1]{SAN}) up to a factor of $(\log B)^{1/2}$. It would be of interest to see whether this magnitude matches the upper bound given here. A major obstacle is the fact that it is not possible to find a uniform generator for $\Br (\sQ_{\bfa, n} \setminus \DDelta) / \Br \QQ$ across the whole family \cite[Cor.~4.4]{UEM}. Obtaining necessary and sufficient conditions to detect the presence of a Brauer--Manin obstruction amounts to selecting a large proper subfamily for which a uniform generator exists.

This paper is organised as follows. We begin in Section \ref{section:SIP} by defining Campana and Darmon points at the local, global and adelic levels, culminating in the definition of local-global principles in the semi-integral setting. In Section \ref{section:BMO}, we develop a semi-integral version of the Brauer--Manin obstruction in order to explore obstructions to local-global principles, and we prove a semi-integral version of Harari's formal lemma which is then used to prove Theorem \ref{thm:main0}. We turn from algebra to geometry in Section \ref{section:maps}, where we explore the behaviour of semi-integral points under certain maps. Having established this behaviour, we directly prove Theorem \hyperref[thm:main1]{\ref*{thm:main1}\emph{\ref*{m1i}}} and use a ``lifting and descending'' procedure to prove Theorem \ref{thm:main2} in Section \ref{section:CPQ}. We turn to Darmon points in Section \ref{section:D1}, which is devoted to the proof of Theorem~\hyperref[thm:main1]{\ref*{thm:main1}\emph{\ref*{m1ii}}}, and in Section \ref{section:D2}, where we prove Theorem~\ref{thm:main3}. Finally, we establish the counting result of Theorem \ref{thm:main4} in Section \ref{section:fail}.

\subsection*{Conventions}

\subsubsection*{Geometry}
Given a ring $R$, we write $\Spec R$ for the spectrum of $R$ with the Zariski topology. An $R$-scheme is a scheme $X$ together with a morphism $X \rightarrow \Spec R$. The set of $R$-points $X\left(R\right)$ of $X$ is the set of sections of the structure morphism $X \rightarrow \Spec R$. If $X = \Proj T$ for some ring $T$ and $f \in T$, we denote by $Z\left(f\right) \subset X$ the closed subscheme $\Proj T/\left(f\right)$. Given a morphism $\Spec S \rightarrow \Spec R$, we denote by $X_S$ the fibre product $X \times_{\Spec R} \Spec S$. Given $n \in \Z_{>0}$, we denote by $\A^n_R$ and $\P^n_R$ the affine and projective $n$-spaces over $R$ respectively, omitting the ground ring $R$ if it is clear from context. A variety over a field $F$ is a geometrically integral separated scheme of finite type over $F$.
\subsubsection*{Number theory}
Given a number field $K$, we denote by $\Omega_K$ the set of places of $K$. We denote by $\Omega_K^\infty \subset \Omega_K$ the finite subset of archimedean places of $K$. For $v \in \Omega_K$, we denote by $K_v$ the completion of $K$ at $v$; if $v \not\in \Omega_K^\infty$, then $\O_v$ denotes the ring of $v$-adic integers in $K_v$, and $\pi_v$ and $\F_v$ denote a uniformiser for $\O_v$ and the residue field of $\O_v$ respectively. We set $q_v := \#\F_v$. Given a finite subset $S \subset \Omega_K$ containing $\Omega_K^\infty$, we denote by $\O_S$ the ring of $S$-integers of $K$ and by $\A_{\O_S}$ the ring $\prod_{v \in S}K_v \times \prod_{v \not\in S}\O_v$ given the product topology. When $S = \Omega_{K}^\infty$, write $\O_K$ for $\O_S$ and $\A_{\O_K}$ for $\A_{\O_S}$. Given a finite subset $T \subset \Omega_K$, we denote by $\A_K^T$ the $T$-adele ring given by the restricted product
\[
\sideset{}{'}\prod_{v \in \Omega_K \setminus T}(K_v,\O_v)\colonequals\left\{(x_v)\in\prod_{v \in \Omega_K \setminus T}K_v \ : \ x_v\in\O_v\textup{ for all but finitely many }v\right\},
\]
recovering the adele ring $\A_K$ when $T = \emptyset$. We denote by $v_p$ the $p$-adic valuation for a rational prime $p \in \QQ$.
\subsubsection*{Arithmetic geometry}
Let $X$ be a variety over $K$. Let $v$ be a non-archimedean place of $K$, and let $S \subset \Omega_K$ be a finite set containing $\Omega_K^\infty$. Let $R \in \{\O_v,\O_S\}$ and $F = \text{Frac}R$. An \emph{$R$-model} of $X$ is a flat separated $R$-scheme $\calX$ of finite type together with an isomorphism between $X$ and the generic fibre of $\calX$. Given $x \in X\left(F\right)$, we denote by $\overline{x} \in \sX\left(R\right)$ the closure of $x$ in $\sX$. Suppose given an $\O_S$-scheme $\calY$, a place $v \not\in S$ and a finite set $S' \subset \Omega_K$ containing $S$. Then we denote by $\calY_{S'}$ and $\calY_v$ the base changes $\calY_{\O_{S'}}$ and $\calY_{\F_v}$ respectively.

\subsubsection*{Analysis}
Given functions $f: \RR \rightarrow \CC$ and $g: \RR \rightarrow \RR$ with $g(x) >0$ for large $x$, we write $f(x) = O(g(x))$ or $f(x) \ll g(x)$ to indicate that there exists a constant $C>0$ and $x_0 \in \R$ such that $|f(x)| \leq Cg(x)$ for all $x \geq x_0$. We denote by $\mu: \mathbb{Z}_{>0} \rightarrow \mathbb{C}$ the M\"obius function.

\subsection*{Acknowledgements}

We are grateful to Martin Bright, Tim Browning, Jean-Louis Colliot--Th\'el\`ene, Daniel Loughran, Boaz Moerman, Marta Pieropan, Tim Santens, Alec Shute, Sho Tanimoto and Haowen Zhang for useful comments and suggestions. We thank the International Centre for Mathematical Sciences and the organisers of the conference ``Rational Points on Higher-Dimensional Varieties'' (namely Daniel Loughran, Rachel Newton and Efthymios Sofos) for the opportunity to present and discuss this project. We also thank Marta Pieropan for the invitation to present this work at the Intercity Number Theory Seminar in Utrecht. Lastly, we are indebted to the anonymous referee for their careful analysis and crucial insights which have greatly improved the quality of this article. The first author was partially supported by grant DE 1646/4-2 of the Deutsche Forschungsgemeinschaft during his stay at Leibniz Universit\"at Hannover and by scientific program "Enhancing the Research Capacity in Mathematical Sciences (PIKOM)", No. DO1-67/05.05.2022 of the Ministry of Education and Science of Bulgaria. The third author was supported by the University of Bristol and the Heilbronn Institute for Mathematical Research.

\section{Semi-integral points} \label{section:SIP}

In this section we introduce and gather information on semi-integral points. We define both Campana and Darmon points at the local, global and adelic levels, introducing both weak and strong approximation as well as the Hasse principle in the semi-integral setting.

\subsection{Campana orbifolds}

We begin by introducing the notion of orbifold developed and studied by Campana across several articles \cites{CAM04,CAM05,CAM11,CAM11a,CAM15}.
Let $k$ be any field.

\begin{mydef}
\label{def:orbifold}
A \emph{(Campana) orbifold} over $k$ is a pair $(X,D)$, where $X$ is a smooth proper variety over $k$ and $D$ is a $k$-rational $\mathbb{Q}$-divisor on $X$ of the form
\[
D = \sum_{\alpha \in \scrA} \epsilon_\alpha D_{\alpha},
\]
with $\epsilon_\alpha = 1-1/m_{\alpha}$, where the $D_{\alpha}$ are distinct irreducible effective Cartier divisors on $X$ and $m_{\alpha} \in \mathbb{Z}_{\geq 1} \cup \{\infty\}$ (with the convention $1/\infty = 0$) satisfies $m_\alpha = 1$ for all but finitely many $\alpha \in \scrA$. We call $m_\alpha$ the \emph{weight} of $D_\alpha$. We denote the support $\cup_{\epsilon_\alpha \neq 0}D_\alpha$ of $D$ by $D_{\red}$. We say that $(X,D)$ is \emph{smooth} if $D_{\red}$ is a strict normal crossings divisor.
\end{mydef}

\begin{note}
We draw the reader's attention to the following conventions.
\begin{enumerate}[label=(\roman*)]
\item When dealing with general orbifolds in this section and Theorem \ref{thm:main0}, we do not employ notation which makes the weights of the $\QQ$-divisor explicit, so that $D$ is understood as a $\QQ$-divisor with fixed components and weights. However, when dealing with the two families of orbifolds which are the focus of this paper, we find it prudent to include the weight $m$ in the notation: we use a subscript $m$ to indicate the weight of the one underlying $\QQ$-component, so that $D$ is understood as an effective Cartier divisor with $D_m = \left(1-1/m\right)D$ the associated $\QQ$-divisor.
\item In Campana's original formulation (see \cites{CAM05,CAM11,CAM15}), the ambient variety $X$ is assumed only to be normal (i.e.\ not necessarily smooth), and the constituents of the orbifold divisor are Weil divisors rather than Cartier divisors. However, it suffices for the present work to restrict to the case of smooth $X$ (in which Cartier and Weil divisors are equivalent), hence we impose smoothness in the above definition.
\item As in \cite[\S3]{PSTVA}, we allow $m_\alpha = 1$ (i.e.\ $\epsilon_\alpha = 0$). However, in contrast with \emph{loc.\ cit.}, we do not include the corresponding $D_\alpha$ in the support of $D$.
\end{enumerate}
\end{note}

\subsection{Models and multiplicities}
Let $K$ be a number field, and let $S \subset \Omega_K$ be a finite set containing $\Omega_K^\infty$. Let $(X,D)$ be a Campana orbifold over $K$.

\begin{mydef}
\label{def:model}
An \emph{$\mathcal{O}_S$-model} of $(X,D)$ is a pair $(\calX,\calD)$, where $\calX$ is a proper $\O_S$-model of $X$, and $\mathcal{D} = \sum_{\alpha \in \scrA}
\epsilon_\alpha\calD_\alpha$
for $\calD_\alpha$ the Zariski closure of $D_\alpha$ in $\mathcal{X}$.
\end{mydef}

Let $v \in \Omega_K \setminus S$. Given an $\mathcal{O}_S$-model $(\calX,\calD)$ of $(X,D)$, denote by $\calD_{\beta_v}$ the $\O_v$-components of the $\calD_\alpha$, writing $\beta_v \mid \alpha$ to signify that $\calD_{\beta_v} \subset \calD_\alpha$.

Let $P \in X(K_v)$. By properness, we get an induced point $\mathcal{P}_v \in \mathcal{X}(\O_v)$. Given a closed subscheme $\mathcal{Z} \subset \mathcal{X}_{\O_v}$, we have the scheme-theoretic intersection $\calP_v \cap \calZ$, meaning the fibre product of $\calP_v: \Spec \O_v \rightarrow \calX_{\O_v}$ and the closed immersion $i_\calZ: \calZ \hookrightarrow \calX_{\O_v}$:

\begin{center}
\begin{tikzcd}
\calP_v \cap \calZ \arrow[r] \arrow[d] & \calZ \arrow[d,"i_\calZ"] \\
\Spec \O_v \arrow[r,"\calP_v"] & \calX_{\O_v}.
\end{tikzcd}
\end{center}

Since closed immersions are stable under base change, we have $\calP_v \cap \calZ \cong \Spec \O_v/I$ for an ideal $I \subset \O_v$. Further, we have $I = (0)$ or $I = (\pi_v^n)$ for some $n \in \mathbb{Z}_{\geq 0}$. We may therefore make the following definition.

\begin{mydef}
The \emph{intersection multiplicity} $n_v(\mathcal{Z},P)$ is $\infty$ when $I = (0)$ (equivalently, when $P \in Z = \mathcal{Z} \times_{\Spec \O_v} \Spec K_v$) and the integer $n$ such that $I = (\pi_v^n)$ otherwise. 
\end{mydef}

When $\calZ$ is the zero locus of one function on $\calX_{\O_v}$, the intersection multiplicity $n_v(\calZ,P)$ recovers the valuation of this function at $P$, as illustrated by the following example.

\begin{example}
Take $X = \mathbb{P}^1_{\QQ}$ with coordinates $x_0,x_1$ and $Z = Z(x_0)$, and denote by $\calZ$ the closure of $Z$ in $\mathbb{P}^1_{\ZZ_p}$ for a given prime $p$. Let $P = [a:b] \in \P^1(\QQ)$ with coprime integral coordinates $a,b$. Then $n_v(\calZ,P) = v_p(a)$.
\end{example}

\begin{remark}
In \cite{STR}, $n_v(\calD_\alpha,P)$ was defined to be the sum $\sum_{\beta_v \mid \alpha} n_v(\calD_{\beta_v},P)$, with $n_v(\calD_{\beta_v},P)$ defined as above. This coincides with our definition when $\mathcal{D}_\alpha$ is Cartier, e.g.\ when $\calX$ is regular (see Proposition \hyperref[prop:compat]{\ref*{prop:compat}\emph{\ref*{prop:compati}}}). This follows from the equality $\mathcal{I}_{\mathcal{P}_v} + \prod_{\beta_v \mid \alpha} \left(\mathcal{I}_{\mathcal{P}_v} + \mathcal{I}_{\mathcal{D}_{\beta_v}}\right) = \mathcal{I}_{\mathcal{P}_v} + \prod_{\beta_v \mid \alpha} \mathcal{I}_{\mathcal{D}_{\beta_v}}$ of associated ideal sheaves on $\calX_{\O_v}$. 
\end{remark}

\begin{mydef}
  \label{def:D-inf-and-fin}
Set $D_{\inf} := \bigcup_{\epsilon_\alpha = 1}D_\alpha$ and $\sD_{\inf} := \bigcup_{\epsilon_\alpha = 1}\sD_\alpha$. 
Analogously, for each $\alpha \in \calA$, set $D_{\alpha,\inf}:= D_{\alpha} \cap D_{\inf}$ and $\calD_{\alpha,\inf}:= \calD_{\alpha} \cap \calD_{\inf}$. Then set $D_{\alpha,\fin}:= D_{\alpha} \setminus D_{\alpha,\inf}$ and $\calD_{\alpha,\fin}:= \calD_{\alpha} \setminus \calD_{\alpha,\inf}$. Finally, set $D_{\fin}:= D_{\red} \setminus D_{\inf}$ and $\calD_{\fin}:= \calD_{\red} \setminus \bigcup_{\epsilon_\alpha = 1}\calD_\alpha$. 
\end{mydef}

\subsection{Campana points}
We are now ready to define Campana points and Darmon points.

\begin{mydef} \label{def:cam}
Let $v \not\in S$, and assume that $n_v(\calD_\alpha,P) = 0$ for all $\epsilon_\alpha = 1$. 

\begin{enumerate}[label=(\roman*)]
\item We say that $P$ is a \emph{$v$-adic weak Campana point} if
\[
\sum_{\alpha \in \scrA, \ \epsilon_\alpha\neq0}\frac{1}{m_\alpha}n_v(\calD_\alpha,P) \in \Q_{\geq 1} \cup \{0,\infty\}.
\]
We denote the set of local weak Campana points by $(\calX,\calD
)^{\cc}_{\mathbf{w}}(\O_v)$.

\item We say that $P$ is a \emph{$v$-adic Campana point} if for all $\epsilon_\alpha < 1$, we have
\[
n_v\(\calD_{\alpha},P\) \in \mathbb{Z}_{\geq m_\alpha} \cup \{0,\infty\}.
\]
We denote the set of local Campana points by $(\calX,\calD)^{\cc}(\O_v)$.
\item We say that $P$ is a \emph{$v$-adic strong Campana point} if, for all $\epsilon_\alpha < 1$, we have
\[
n_v\left(\calD_{\beta_v},P\right) \in \mathbb{Z}_{\geq m_\alpha} \cup \{0,\infty\} \textrm{ for all $\beta_v \mid \alpha$.}
\]
We denote the set of local strong Campana points by $(\calX,\calD)^{\cc}_{\mathbf{s}}(\O_v)$.
\end{enumerate}

For $v \in S$, we say that $P \in X(K_v)$ is a \emph{$v$-adic Campana/strong Campana/weak Campana point} if $P \in (X \setminus D_{\inf}) (K_v)$ and use the same notation for the resulting set.

When the place $v$ is clear from context, we use the term ``local'' in place of $v$-adic.

Given $S' \supset S$, we say that $P \in (X \setminus D_{\inf}) (K)$ is an \emph{($S'$-integral) Campana point} if $P \in (\calX,\calD)^{\cc}(\O_v)$ for all $v \in \Omega_K \setminus S'$. We denote the set of $S'$-integral Campana points by $(\calX,\calD)^{\cc}(\O_{S'})$. (Note that this equals $(\calX_{S'},\calD_{S'})^{\cc}(\O_{S'})$.) We define the sets $(\calX,\calD)^{\cc}_\mathbf{w}(\O_{S'})$ and $(\calX,\calD)^{\cc}_\mathbf{s}(\O_{S'})$ of weak and strong $S'$-integral Campana points analogously. When $S = S'$, we refer to $S'$-integral Campana points as \emph{(global) Campana points}, ditto for their weak and strong counterparts.
\end{mydef}

\begin{note}
The second definition above is the definition of Campana points used in the recent paper of Pieropan, Smeets, Tanimoto and V\'arilly-Alvarado \cite{PSTVA}, while the third definition (strong Campana points) is the definition of Campana points employed by the third author in \cite{STR}. The latter definition was adopted in order to reconcile differences between the Manin-type conjecture \cite[Conj.~1.1,~p.~59]{PSTVA} and the counting result \cite[Thm.~1.4]{STR} for certain mildly singular orbifolds associated to norm forms. By separating these two notions, we hope to clarify future work on Campana points.
\end{note}

\subsection{Darmon points}

We now define Darmon points, following ideas in \cite{DAR}. While Campana points correspond to $m$-full values, Darmon points correspond to $m$th powers.

\begin{mydef} \label{def:dar}
Let $v \not\in S$, and assume that $n_v(\calD_\alpha,P) = 0$ for all $\epsilon_\alpha = 1$.
\begin{enumerate}[label=(\roman*)]
\item We say that $P$ is a $v$-adic \emph{Darmon point} if
\[
m_\alpha \mid n_v\left(\calD_{\alpha},P\right)
\]
for all $\epsilon_\alpha < 1$.
We denote the set of local Darmon points by $(\calX,\calD)^{\dd}(\O_v)$.
\item We say that $P$ is a $v$-adic \emph{strong Darmon point} if for all $\epsilon_\alpha < 1$, we have
\[
m_\alpha \mid n_v\left(\calD_{\beta_v},P\right) \textrm{ for all $\beta_v \mid \alpha$.}
\]
We denote the set of local strong Darmon points by $(\calX,\calD)^{\dd}_\mathbf{s}(\O_v)$.
\end{enumerate}
For $v \in S$ we say that $P$ is a \emph{$v$-adic Darmon/strong Darmon point} if $P \in (X \setminus D_{\inf}) (K_v)$.

Given $S' \supset S$, we say that $P \in (X \setminus D_{\inf}) (K)$ is an \emph{($S'$-integral) Darmon point} if it is a $v$-adic Darmon point for all $v \in \Omega_K \setminus S'$.
We denote the set of $S'$-integral Darmon points by $(\calX,\calD)^{\dd}(\O_{S'})$ and define $(\calX,\calD)^{\dd}_\mathbf{s}(\O_{S'})$ analogously. When $S = S'$, we refer to $S'$-integral Darmon/strong Darmon points as \emph{(global) Darmon/strong Darmon points}. 
\end{mydef}

In this paper, we do not study strong Darmon points. However, it would be interesting to know whether the notion of strong Darmon points is in some sense appropriate for mildly singular orbifolds as that of strong Campana points proved to be in \cite{STR}.

\subsection{Stacky points}

As noted in the work of Nasserden and Xiao \cite[\S2]{NX1}, \cite[\S3]{NX2}, there is a correspondence between $M$-curves (Campana orbifolds of dimension $1$) and \emph{stacky curves} on the geometric level: the data of a stacky curve (namely the sizes of the automorphism groups attached to its stacky points) gives rise to an orbifold structure on its coarse moduli space, with a similar construction in the other direction. One may wonder whether such a correspondence exists on the arithmetic level, in particular whether the integral points of a stacky curve correspond to Darmon points on the associated $M$-curve. The following example, inspired by \cite[\S5]{BP}, gives an instance in which this is the case.

\begin{example} \label{ex:dar}
Endow $\mathbb{P}^1_{\mathbb{Z}}$ with the action of $\mu_2$ sending $[x_0:x_1]$ to $[x_0:-x_1]$. This action fixes the points $[1:0]$ and $[0:1]$. Let $\scrX$ be the quotient stack $[\mathbb{P}^1/\mu_2]$, and denote by $\rho: \scrX \rightarrow \mathbb{P}^1$ the associated morphism of stacks. Note that $\scrX$ is isomorphic to the root stack $\mathbb{P}^1[\sqrt{[1:0]},\sqrt{[0:1]}]$, see \cite[Thm.~4.10]{DK}. The coarse moduli space of $\scrX$ is $\mathbb{P}^1$. As observed in \cite[\S5]{BP}, the set of $R$-points of $\scrX$ for $R \in \{\mathbb{Z}_p,\mathbb{Z}\}$ decomposes as
\[
\scrX\left(R\right) = \coprod_{t \in R^*/R^{*2}}\pi_t \left(\scrY_t\left(R\right)\right)
\]
for $\scrY_t$ the twisted cover coming from the action of the $\mu_2$-torsor $T_t = \Spec R[u]/(u^2 - t)$. Set $D := \(1-1/2\)\left([1:0] + [0:1]\right)$ and let $\sD$ be the closure of $D$ in $\mathbb{P}^1_\ZZ$. For $R \in \{\mathbb{Z}_p,\mathbb{Z}\}$,
\[
\left(\rho \circ \pi_t\right)\left(\scrY_t\left(R\right)\right) = (\mathbb{P}^1_{\ZZ},\sD)^{\dd}\left(R\right).
\]
\end{example}

Later (Remark \ref{rmk:rootstack}), we will see a tantalising hint that this stacky--Darmon correspondence extends to the Brauer--Manin obstruction.

\subsection{Integral points}

Lastly, we introduce the following terminology for local and global points seen in the integral Brauer--Manin obstruction of Colliot-Th\'el\`ene and Xu \cite{CTX}.

\begin{mydef}
Let $U$ be a variety (not necessarily proper) over a number field $K$. Let $S \subset \Omega_K$ be a finite subset containing $\Omega_K^\infty$, and let $\calU$ be an $\O_S$-model of $U$. For $v \not\in S$, we say that $P \in U(K_v)$ is a \emph{$v$-adic integral point} of $U$ (with respect to the model $\calU$) if $P$ extends to an $\O_v$-point $\calP \in \calU(\O_v)$. By abuse of notation, we write $\calU(\O_v)$ for the set of $v$-adic integral points of $U$ with respect to $\calU$, considered as a subset of $U(K_v)$. We say that $P \in U(K)$ is an \emph{($S$-)integral point of $U$} with respect to the model $\calU$ if $P \in \calU(\O_v)$ for all $v \not\in S$. We denote the set of integral points of $\calU$ by $\calU(\O_S)$.
\end{mydef}

For an orbifold $(X,D)$ over $K$ with $\O_S$-model $(\calX,\calD)$, we have the following inclusions:
\begin{center}
\begin{tikzcd}[row sep=small, column sep = tiny]
& \left(\calX\setminus\calD_{\red}\right)\left(\O_v\right) \arrow[r, phantom, sloped, "\subset"]
& \left(\calX,\calD\right)^{\dd}_{\mathbf{s}}\left(\O_v\right) \arrow[r, phantom, sloped, "\subset"] \arrow[d, phantom, sloped, "\subset"]
& \left(\calX,\calD\right)^{\cc}_{\mathbf{s}}\left(\O_v\right) \arrow[d, phantom, sloped, "\subset"] & \\

&
& \left(\calX,\calD\right)^{\dd}\left(\O_v\right) \arrow[r, phantom, sloped, "\subset"]
& \left(\calX,\calD\right)^{\cc}\left(\O_v\right) \arrow[d, phantom, sloped, "\subset"] & \\

& 	& 	& \left(\calX,\calD\right)^{\cc}_{\mathbf{w}}\left(\O_v\right) \arrow[d, phantom, sloped, "\subset"] & \\
&   &   & X\left(K_v\right). \\
\end{tikzcd}
\end{center}

\subsection{Adelic points and local-global principles} \label{subsec:semi-integral-adeles}

In this section we introduce adelic spaces for semi-integral points with a view to formulating semi-integral versions of both the Brauer--Manin obstruction and local-global principles.

\begin{note} \label{note:*}
We shall discuss both Campana and Darmon points at once by using the symbol $*$. The corresponding statement for Campana (respectively, Darmon) points will then be recovered by substituting $\cc$ or Campana (respectively, $\dd$ or Darmon) for $*$. For example, $*$-points specialises to Campana or Darmon points.
\end{note}

Recall that $(X,D)$ is a Campana orbifold over a number field $K$ with $\O_S$-model $(\calX,\calD)$ for some finite set $S$ with $\Omega_K^\infty \subset S \subset \Omega_K$ and $U := X \setminus D_{\textrm{red}}$. Set $\calU := \calX \setminus \calD_{\textrm{red}}$.

\begin{note}
Write $D = \sum_{\alpha \in \scrA}\left(1-1/m_\alpha\right)D_\alpha$. While any point on $D_{\fin}$ trivially satisfies the semi-integral condition for any $D_\alpha$ on which it lies, it may fail to satisfy the semi-integral condition for other $\QQ$-components of $D$. A simple example is furnished by the orbifold $\left(\mathbb{P}^2_\QQ,\left(1-1/2\right)\left(D_0 + D_1\right)\right)$, where $D_i = Z\left(x_i\right)$ for $\mathbb{P}^2$ with coordinates $x_0,x_1,x_2$. Taking the obvious $\ZZ$-model $\left(\mathbb{P}^2_\ZZ,\left(1-1/2\right)\left(\calD_0 + \calD_1\right)\right)$, we observe that $n_p\left(\calD_0,P\right) = \infty$ and $n_p\left(\calD_1,P\right) = 1$ for $P = [0:p:1]$ and a chosen prime $p$.
\end{note}

By definition, the set of semi-integral points $(\sX, \sD)^*(\sO_S)$ may contain some $K$-rational points of $D_{\fin}$. Note that when $\epsilon_\alpha < 1$ for all $\alpha \in \scrA$ (i.e.\ when $(X,D)$ is \emph{Kawamata log terminal}), we have $D_{\fin} = D_{\red}$. While the inclusion of these points may seem natural from the topological or analytic point of view (as well as from the number-theoretic perspective), their inclusion becomes troublesome when studying local-global principles. Loosely, this is because $D_{\fin}$ is of lower dimension than $X$, and its $K$-rational points may behave very differently from the $K$-rational and $S$-integral points on $X \setminus D_{\red}$. To resolve this, we break the set of semi-integral points into two disjoint subsets:
\[
  \(\sX,\sD\)^*\(\sO_S\)
  = \(\(\sX,\sD\)^*\(\sO_S\) \cap U(K)\) \coprod \(\(\sX,\sD\)^*\(\sO_S\) \cap  \sD_{\fin}\left( \sO_S \right)\).
\]
We shall refer to $(\sX,\sD)_{\str}^*(\sO_S) \coloneqq  (\sX,\sD)^*(\sO_S) \cap U(K)$ as the set of \emph{strict semi-integral points}, while $(\sX,\sD)_{\tr}^*(\sO_S) \coloneqq (\sX,\sD)^*(\sO_S) \cap \sD_{\fin}(\sO_S)$ will be referred to as the set of \emph{non-strict semi-integral points}. We extend this philosophy via the analogous definitions for local semi-integral points. Each global strict semi-integral point lies in $\sU(\sO_v)$ for all but finitely many places $v$. This motivates the following definition.

\begin{mydef} \label{def:adelicCampana}
Given a finite set $T \subset \Omega_K$, we define the sets of \emph{strict $T$-adelic semi-integral points} and \emph{non-strict $T$-adelic semi-integral points} of $(\sX,\sD)$ (denoted by $(\sX,\sD)_{\str}^*(\AA_{K, S}^T)$ and by $(\sX,\sD)_{\tr}^*(\AA_{K, S}^T)$ respectively) by
  \[
    \begin{split}
      \(\sX,\sD\)_{\str}^*\(\AA_{K, S}^T\)
      &:= \prod_{v \in S \setminus (S \cap T)} U(K_v) \times \sideset{}{'}\prod_{v \in \Omega_K \setminus (S\cup T)}\( \(\sX,\sD\)^*\(\sO_v\) \cap U(K_v),\sU\(\sO_v\)\) 
      , \\
      \(\sX,\sD\)_{\tr}^*\(\AA_{K, S}^T\)
      &:=\bigcup_{\substack{\alpha \in \scrA \\ \epsilon_\alpha \neq 0,1 }}
      \left( \prod_{v \in S \setminus (S \cap T)} D_{\alpha, \fin}(K_v) 
      \times \prod_{v \in \Omega_K \setminus (S\cup T)} \sD_{\alpha, \fin}\(\sO_v\) \cap \(\sX,\sD\)^*\(\sO_v\) 
      \right).
    \end{split}
  \]
We equip $(\sX,\sD)_{\str}^*(\AA_{K, S}^T)$ with the restricted product topology, while $\(\sX,\sD\)_{\tr}^*\(\AA_{K, S}^T\)$ is considered as a subset of $\prod_{v \in S \setminus (S \cap T)}D_{\fin}(K_v) \times \prod_{v \in \Omega_K \setminus (S\cup T)} \sD_{\fin}(\sO_v)$ whose product topology it inherits. We define the set of \emph{$T$-adelic semi-integral points} as the coproduct
  \[
    \(\sX,\sD\)^*\(\AA_{K, S}^T\) 
    := \(\sX,\sD\)_{\str}^*\(\AA_{K, S}^T\) \coprod \(\sX,\sD\)_{\tr}^*\(\AA_{K, S}^T\),
  \]
endowed with the coproduct topology, which we call the \emph{adelic topology} on $(\sX,\sD)^*(\A^T_{K, S})$.

We define the \emph{adelic semi-integral points} to be the $T$-adelic semi-integral points for $T = \emptyset$ and denote them by $(\calX,\calD)^*(\AA_{K, S})$. We define the sets of \emph{strict} and \emph{non-strict adelic semi-integral points} analogously. We omit $S$ when it is clear from context.
\end{mydef} 

\begin{note}
\label{note:adelic-semi-integral}
We make the following remarks on adelic semi-integral points.
\begin{enumerate}
\item Note that $\(\sX,\sD\)^*\(\AA_{K, S}^T\) = \(\sX,\sD\)^*\(\AA_{K, S \cup T}^T\)$, so, going from adelic points to $T$-adelic points, we lose information about integrality/the model at places in $T$, i.e.\ such information is seen only away from $S \cup T$. Thus, when studying $S'$-integral points in the $T$-adelic points (in particular, when studying weak/strong approximation), it is natural to assume that $S' \supset S \cup T$, i.e.\ to ``drop'' integrality at places in $T$, as we do in Definition \ref{def:CWA}.

\item By our coproduct definition, a $T$-adelic semi-integral point is either strict at every place not in $S$ or non-strict at every place not in $S$.

\item When $D = \epsilon_\alpha D_\alpha$ with $\epsilon_\alpha \neq 0,1$, every local (respectively, global, $T$-adelic) point on $D_\alpha$ is a non-strict semi-integral point on $\left(\calX,\calD\right)$, so that $\(\sX,\sD\)_{\ns}^*\(\AA_K^T\) = \calD_{\alpha}\left(\AA_K^T\right)$. This is the situation in which we find ourselves for both of the two main orbifold families in this paper.
\end{enumerate}
\end{note}

We see that $(\sX,\sD)_{\str}^*(\O_{S \cup T})$ embeds diagonally in $(\sX,\sD)_{\str}^*(\AA_{K, S}^T)$, ditto $(\sX,\sD)_{\ns}^*(\O_{S \cup T})$ in $(\sX,\sD)_{\ns}^*(\AA_{K, S}^T)$, thus so does $(\sX,\sD)^*(\sO_{S \cup T})$ in $(\sX,\sD)^*(\AA_{K, S}^T)$.

The sets $\sU(\sO_v)$ and $\sD_{\alpha, \fin}(\sO_v)$ are compact for any non-archimedean place $v$, as is $(\sX,\sD)^*(\sO_v)$ by \cite[Rem.~3.5]{PSTVA} (the Darmon case follows in the same way as the Campana case), while $U(K_v)$ and $D_{\alpha, \fin}(K_v)$ are locally compact and Hausdorff, thus our definition of adelic space still enjoys the property of being locally compact.

Note that the inclusion of sets
\[
  \(\sX, \sD\)^*\(\AA_{K, S}^T\) 
  \subset \prod_{v \in \Omega_K \setminus T} \(X \setminus D_{\inf}\)(K_v)
\]
makes $(\sX, \sD)^*(\AA_{K, S}^T)$ into a topological space with respect to the subspace topology that we shall refer to as the product topology. In general, the adelic topology on $(\sX, \sD)^*(\AA_{K, S}^T)$ is finer than the subspace topology. For example, $\prod_{v \in S \setminus (S \cap T)} U(K_v) \times \prod_{v \in \Omega_K \setminus (S \cup T)} \sU(\sO_v)$ is open in $(\sX, \sD)_{\str}^*(\AA_{K, S}^T)$ with respect to the adelic topology. On the other hand, the same set is not open with respect to the subspace topology unless $\sU(\sO_v) = (\sX, \sD)^*(\sO_v) \cap U(K_v)$ for all but finitely many $v \in \Omega_K \setminus (S \cup T)$, and the local semi-integral conditions will generally make this equality fail.

\begin{note}
We point out another change in notation. Adelic Campana points were defined in \cite{NS} and \cite{PSTVA} as the unrestricted product of local Campana points, while here, we restrict the product with respect to local integral points. As we shall see in the next section, working with the restricted product allows us to define a semi-integral version of the Brauer--Manin pairing, which is one of the primary motivations of this work.
\end{note}

\begin{remark}\label{rem:adelicspace}
We can define even finer adelic spaces than the one given in Definition \ref{def:adelicCampana}. The main motivation is that doing so allows us to define possibly stronger obstructions to local-global principles, but at the cost of increasing the computational difficulty (see Remark \ref{rmk:BMO}). Let the irreducible components of $D_{\red}$ be $D_1,\dots,D_r$, and let $\Sigma$ be a collection of subsets of $\{1,\ldots,r\}$. For any non-empty set $A\in \Sigma$ which is maximal (in the sense that there does not exist $A'\in \Sigma$ such that $A\subsetneq A'$), set $D_A:=\cap_{i\in A}D_i\setminus D_{\inf}$. On the other hand, set $D_{\emptyset} := U$. Iteratively define, for any $A\in\Sigma$, the locally closed set
\[
D_{A}:=\(\bigcap_{i\in A}D_i\)\setminus \(D_{\inf} \cup \bigcup_{A\subsetneq A'\in \Sigma}D_{A'}\).
\]
Assume that $\cup_{A \in \Sigma}D_A = X\setminus D_{\inf}$, which is equivalent to $\emptyset,\{i\}\in\Sigma$ for $i=1,\ldots,r$. Set
\[
(\sX,\sD)_A^*(\A_{K, S}^T)\colonequals \prod_{v \in \Omega_K \setminus (S\cup T)}\(\sD_A\(\sO_v\) \cap \(\sX,\sD\)^*(\O_v)\)
      \times \prod_{v \in S \setminus (S \cap T)} D_A(K_v),
\]
and finally
\[
(\sX,\sD)_{\Sigma}^*(\A_{K, S}^T)\colonequals\bigcup_{A\in \Sigma}(\sX,\sD)_A^*(\A_{K, S}^T).
\]
The smallest set of adelic points occurs when we take $\Sigma$ to consist of all subsets. Roughly, this set of adelic points consists of collection of local semi-integral points which all lie on certain intersections of the divisors $D_i$ dictated by the set $\Sigma$. Note that the definition of adelic set as above is recovered by choosing $\Sigma$ to consist of $\emptyset$ and all one-element subsets  of $\{1,\dots,r\}$. We choose this as our definition as we believe it offers a nice compromise between computability and strength of the Brauer--Manin obstruction. 
\end{remark}

\begin{remark} \label{rem:1comp}
Another approach for semi-integral points for orbifold divisors with several components comes from the following observation: writing $D = \sum_{i=1}^r \epsilon_i D_i$, we have
\[
\(\sX,\sD\)^*\(R\) = \cap_{i=1}^r\(\sX,\sD_i\)^*\(R\) 
\]
for $R \in \{\O_v,\O_S\}$, and so we may in principle study the semi-integral points of $\(\sX,\sD\)$ by studying those of the one-component sub-orbifolds $\(\sX,\sD_i\)$.
\end{remark}

Recall that if $V$ is a variety over $K$ with an $S$-integral model $\calV$, its adelic space $V(\AA_{K})$ consists of those points $(P_v) \in \prod_{v \in \Omega_K} V(K_v)$ such that $P_v$ is a $v$-adic integral point for all but finitely many $v\notin S$. Recall also that $\sV(\A_{\O_{S}}) = \prod_{v \in S} V(K_v) \times \prod_{v \in \Omega_K \setminus S} \sV(\sO_v)$. As before, an added superscript $T$ will stand for the projection to $T$-adeles in both adelic spaces above. We will shortly be concerned with the question of whether the existence of local semi-integral points at all places implies the existence of a global semi-integral point. To that end recall the following definition for rational points. 

\begin{mydef}
We say that a family of varieties $\mathscr{F}$ over a number field $K$ satisfies the \emph{Hasse principle} if the following implication holds for all $V \in \mathscr{F}$:
\[
V\left(\A_{K}\right) \neq \emptyset \implies V\left(K\right) \neq \emptyset.
\]
\end{mydef}

Before defining approximation properties for semi-integral points, we first recall the definitions of weak and strong approximation for varieties over number fields. We also introduce strong approximation for integral models, which is closely related to the version of strong approximation for integral models seen in \cite{LM} and the approximation property seen in \cite[Thm.~6.1]{CTX}.

\begin{mydef} \label{def:approx}
  Let $V$ be a variety (not necessarily projective) over a number field $K$ with an $S$-integral model $\calV$. Let $T$ be a finite set of places of $K$.
  \begin{enumerate}[label=(\roman*)]
    \item We say that $V$ satisfies \emph{weak approximation off $T$} if $V(K)$ is dense in $\prod_{v \not\in T} V(K_v)$ under the product topology. We say that $V$ satisfies \emph{weak weak approximation} (\emph{WWA}) if it satisfies weak approximation off some set $T$, and \emph{weak approximation} (\emph{WA}) if it satisfies weak approximation off $T = \emptyset$.

    \item We say that $V$ satisfies \emph{strong approximation off $T$} if $V(K)$ is dense in $V(\A_{K}^T)$ under the adelic topology. We say that $V$ satisfies \emph{strong approximation} (\emph{SA}) if it satisfies strong approximation off $T = \emptyset$.

    \item In the setting of the previous definition, we say that $\sV$ satisfies \emph{integral strong approximation off $T$} if $\sV\left(\O_{S \cup T}\right)$ is dense in $\sV(\A_{\O_{S \cup T}}^T) = \sV(\A_{\O_S}^T)$. We say that $\sV$ satisfies integral strong approximation (ISA) if it satisfies ISA off $T = \emptyset$. 
  \end{enumerate}
\end{mydef}

Note that, for a proper variety $V$, we have $V(\A_K) = \prod_{v \in \Omega_K}V(K_v)$, hence strong approximation is equivalent to weak approximation.

Roughly, weak approximation is satisfied if any finite collection of local points can be simultaneously approximated by a rational point, while strong approximation if satisfied if such an approximation can be found so that the approximating point is locally integral at all other places. In particular, strong approximation for $V$ off $T$ implies that any finite collection of local \textbf{integral} points can be approximated by an $S \cup T$-integral point of $\sV$, i.e.\ that integral strong approximation for $\sV$ off $T$ holds, but for the converse implication to hold, as we shall see in Proposition \ref{prop:BMOequiv}, one needs ISA off $T$ to hold for any extension of the model $\sV$ to $\sO_{S'}$, where $S'$ is a finite set of places containing $S$.

We now give the definitions of the above local-global principles for semi-integral points.

\begin{mydef} 
  \label{def:CHP}
  Let $\mathscr{F}$ be a collection of $\O_S$-models for Campana orbifolds. We say that $\mathscr{F}$ satisfies the \emph{Hasse principle for $*$-points ($*$HP)} if the following implication holds for all $(\calX,\calD) \in \mathscr{F}$:
  \[
    \left(\calX,\calD\right)^*\left(\A_{K, S}\right) \neq \emptyset \implies \left(\calX,\calD\right)^*\left(\O_S\right) \neq \emptyset.
  \]
\end{mydef}

In a similar fashion, the Hasse principle may be defined for strict or non-strict semi-integral points by restricting to the desired component of the adelic space.

Recall that Definition~\ref{def:adelicCampana} ensures that $(\sX, \sD)^*(\AA_{K, S \cup T}^T) = (\sX, \sD)^*(\AA_{K, S}^T)$. This leads to the following definitions of weak and of strong approximation.

\begin{mydef} \label{def:CWA}
Let $(\calX,\calD)$ be an $\O_S$-model for a Campana orbifold $(X,D)$, and let $T$ be a finite set of places of $K$.
\begin{enumerate}[label=(\roman*)]
\item We say that $(\calX,\calD)$ satisfies \emph{weak approximation for $*$-points off $T$} (\emph{$*$WA off $T$}) if $(\calX,\calD)^*(\O_{S \cup T})$ is dense in $(\sX, \sD)^*(\AA_{K, S}^T)$ for the product topology. We say that $(\calX,\calD)$ satisfies \emph{weak weak approximation for $*$-points} (\emph{$*$WWA}) if it satisfies $*$WA off some finite set of places $T$. We say that $(\calX,\calD)$ satisfies \emph{weak approximation for $*$-points ($*$WA)} if it satisfies $*$WA off $T = \emptyset$.
\item We say that $(\sX, \sD)$ satisfies \emph{strong approximation for $*$-points off $T$} (\emph{$*$SA} off $T$) if $(\sX, \sD)^*(\sO_{S \cup T})$ is dense in $(\sX, \sD)^*(\AA_{K, S}^T)$ for the adelic topology.
We say that $(\calX,\calD)$ satisfies \emph{strong approximation for $*$-points ($*$SA)} if it satisfies $*$SA off $T = \emptyset$.
\end{enumerate}
\end{mydef}

Concretely, if we are given $(P_v) \in (\sX, \sD)_{\str}^*(\AA_{K, S}^T)$, then strong approximation for $*$-points off $T$ means that for any finite set of places $R$ containing $S \cup T$, and for any open sets $W_v \subset U(K_v)$ for $v \in S \setminus T$ and $W_v \subset (\sX, \sD)_{\str}^*(\O_v)$ for $v \in R \setminus (S \cup T)$ with $P_v \in W_v$  for all $v \in R \setminus T$, there is a point of $(\sX, \sD)_{\str}^*(\sO_{S\cup T})$ inside
\[
  \prod_{v\in R\setminus T} W_v\times \prod_{v\notin R} \sU(\O_v).
\]
Similarly, if $(P_v) \in (\sX, \sD)_{\tr}^*(\AA_{K, S}^T)$, then we can approximate $(P_v)$ by an $S \cup T$-semi-integral point lying in the non-strict component of the $T$-adelic space.

See Proposition \ref{prop:BMOequiv} for relationships between different notions of strong approximation. One may define strict and non-strict strong, weak and weak weak approximation in a similar fashion, but it appears that the results about the density of semi-integral points are more interesting when the whole adelic space is considered.

\section{Semi-integral Brauer--Manin obstruction} \label{section:BMO}

In this section we develop a semi-integral version of the Brauer--Manin obstruction in order to study the Hasse principle and strong and weak approximation in this context.

\subsection{Classical Brauer--Manin obstruction}

We begin with a brief overview of the Brauer--Manin obstruction for rational points (an excellent reference for which is \cite[\S13.3]{CTS2}) and the version for integral points developed by Colliot--Th\'el\`ene and Xu in \cite{CTX} in the case where the underlying variety is not necessarily smooth.

\begin{mydef}
Let $V$ be a scheme. The \emph{Brauer group} of $V$ is $\Br V = H^2_{\text{\'et}}(V,\mathbb{G}_m)$. Given a commutative ring $R$, we write $\Br R$ for $\Br \Spec R$.
\end{mydef}

Here, the subscript \'et denotes \'etale cohomology and $\mathbb{G}_m$ denotes the \'etale sheaf defined by $\mathbb{G}_m(U) = \O_U(U)^*$ for $U$ an \'etale $V$-scheme.

By functoriality, given a commutative ring $R$, any $R$-valued point $P: \Spec R \rightarrow V$ of $V$ induces a map $\Br V \rightarrow \Br R$. Fixing $\calA \in \Br V$, we obtain an \emph{evaluation map} $\ev_\calA: V(R) \rightarrow \Br R$. We denote the image of $P \in V(R)$ by $\calA(P)$.

\begin{lemma} \cite[Thm.~1.5.34(i)]{POO}
Let $k$ be a local field. There is a canonical injection $\Br k \hookrightarrow \QQ/\ZZ$ with image
\[
\begin{cases}
\mathbb{Q} / \mathbb{Z} & \textrm{ if $k$ is non-archimedean}, \\
\mathbb{Z} / 2\mathbb{Z}  & \textrm{ if $k = \mathbb{R}$}, \\
0 & \textrm{ if $k = \mathbb{C}$}.
\end{cases}
\]
\end{lemma}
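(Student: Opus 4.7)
My plan is to dispatch the three cases separately. For $k = \CC$, algebraic closedness forces every finite-dimensional central simple $\CC$-algebra to be a matrix algebra by Wedderburn's theorem, whence $\Br \CC = 0$. For $k = \RR$, Frobenius's classification of finite-dimensional real division algebras (namely $\RR$, $\CC$ and the Hamilton quaternions $\mathbb{H}$) leaves only $\RR$ and $\mathbb{H}$ as central division $\RR$-algebras, giving $\Br \RR \cong \ZZ/2\ZZ$ generated by the class of $\mathbb{H}$.

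For $k$ non-archimedean with residue field $\FF_q$, I would work via Galois cohomology, using the identification $\Br k \cong H^2(\Gal(k^{\mathrm{sep}}/k), (k^{\mathrm{sep}})^\ast)$. The first step is to compute the unramified Brauer group $\Br(k^{\mathrm{un}}/k) \coloneqq H^2(\Gal(k^{\mathrm{un}}/k), (k^{\mathrm{un}})^\ast)$, where $k^{\mathrm{un}}$ denotes the maximal unramified extension. Since $\Gal(k^{\mathrm{un}}/k) \cong \hat{\ZZ}$ is procyclic, generated by Frobenius, and the residue field of $k^{\mathrm{un}}$ is $\overline{\FF}_q$, Hensel's lemma together with Lang's theorem on connected algebraic groups over finite fields yields $H^i(\Gal(k^{\mathrm{un}}/k), \O_{k^{\mathrm{un}}}^\ast) = 0$ for $i \geq 1$. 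Feeding this into the long exact sequence associated with $1 \to \O_{k^{\mathrm{un}}}^\ast \to (k^{\mathrm{un}})^\ast \to \ZZ \to 1$, and combining with the standard computation $H^2(\hat{\ZZ}, \ZZ) \cong \QQ/\ZZ$ (via the connecting map of $0 \to \ZZ \to \QQ \to \QQ/\ZZ \to 0$ and the vanishing of $H^i(\hat{\ZZ}, \QQ)$ for $i \geq 1$), yields $\Br(k^{\mathrm{un}}/k) \cong \QQ/\ZZ$.

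The main obstacle is then to upgrade this to the full claim $\Br k \cong \QQ/\ZZ$, i.e., to prove that every central simple $k$-algebra is split by an unramified extension. An analogous computation over the henselisation with algebraically closed residue field yields $\Br k^{\mathrm{un}} = 0$, and the inflation--restriction sequence from Hochschild--Serre then identifies $\Br k$ with $\Br(k^{\mathrm{un}}/k)$ provided one knows inflation is surjective. The standard route to surjectivity is to show directly that any central division algebra $D$ of degree $n$ over $k$ contains an unramified maximal subfield: extend the normalised valuation of $k$ uniquely to $D$, examine the residue division algebra of the maximal order $\O_D$, and use Hensel's lemma to lift a generator of a separable maximal subfield of the residue algebra to an unramified maximal subfield of $D$. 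This is the substance of local class field theory for $\Br k$ and is the step I would import from a reference such as Serre's \emph{Local Fields}.
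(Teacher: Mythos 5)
The paper does not prove this statement at all; it simply cites \cite[Thm.~1.5.34(i)]{POO}, so there is no paper proof to compare against. That said, your argument is the correct standard derivation. The archimedean cases via Wedderburn and Frobenius are fine, and the non-archimedean case is handled by the usual local class field theory route: computing $\Br(k^{\mathrm{un}}/k) \cong \QQ/\ZZ$ from the valuation exact sequence, Lang/Hilbert~90 for the vanishing of cohomology of units, and the procyclic computation $H^2(\hat{\ZZ},\ZZ) \cong \QQ/\ZZ$, and then showing the unramified Brauer group is everything. One small redundancy in your write-up: once you have $\Br k^{\mathrm{un}} = 0$, the inflation--restriction sequence automatically makes inflation an isomorphism (restriction lands in zero), so the separate ``standard route to surjectivity'' you then describe --- lifting a maximal subfield of the residue division algebra --- is an alternative self-contained argument rather than an additional step you still need. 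Either path alone suffices; you needn't invoke both.
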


Given a place $v$ of a number field $K$, denote by $\inv_v: \Br K_v \hookrightarrow \QQ/\ZZ$ the injection from the previous lemma.

Now let $V$ be a variety over $K$ (not necessarily smooth or proper). The utility of the Brauer group in local-global questions for rational points comes from the following commutative diagram for each $\calA \in \Br V$:

\begin{center}
\begin{tikzcd}
& & V\left(K\right) \arrow[r,hook] \arrow[d,"\ev_\calA"] & V\left(\A_K\right) \arrow[d,"\ev_\calA"] & & \\
& 0 \arrow[r] & \Br K \arrow[r] & \bigoplus_v\Br K_v \arrow[r,"\sum_v \inv_v"] & \QQ/\ZZ \arrow[r] & 0.
\end{tikzcd}
\end{center}

Exactness of the bottom row follows from the Albert--Brauer--Hasse--Noether theorem and global class field theory. Commutativity implies that the set $V(\A_K)^{\Br} = \{(P_v)_v \in V(\A_K): \sum_v \inv_v \calA(P_v) = 0 \text{ for all } \calA \in \Br V\}$ contains $V(K)$, hence we have inclusions
\[
V\left(K\right) \subset V\left(\A_K\right)^{\Br} \subset V\left(\A_K\right).
\]
In particular, emptiness of $V(\A_K)^{\Br}$ implies that $V(K)$ is empty. Further, the pairing of adelic points and Brauer elements is continuous \cite[Prop.~10.5.2]{CTS2}, hence $V(\A_K)^{\Br}$ is closed in $V(\A_K)$. These observations motivate the following definitions.

\begin{mydef}
Let $V$ be a variety over a number field $K$ and let $T$ be a finite set of places of $K$.
\begin{enumerate}[label=(\roman*)]
\item We say that $V$ has a \emph{Brauer--Manin obstruction to the Hasse principle} if $V(\A_K) \neq \emptyset$ but $V(\A_K)^{\Br} = \emptyset$.
    
\item We say that $V$ has a \emph{Brauer--Manin obstruction to weak approximation off $T$} if the closure of the projection of $V(\A_K)^{\Br}$ to $\prod_{v \notin T} V(K_v)$ (with respect to the product topology) does not equal the whole space.

\item We say that $V$ has a \emph{Brauer--Manin obstruction to strong approximation off $T$} if the projection of $V(\A_K)^{\Br}$ to $V(\A_K^T)$ is not equal to $V(\A_K^T)$.
\end{enumerate}
We omit ``off $T$'' above if we may take $T = \emptyset$.
\end{mydef}

Let $\sV$ be an integral model of $V$ over $\sO_S$. As explained in the previous subsection, if $V$ is not proper, then the sets $V(K_v)$ and $\sV(\sO_v)$ do not necessary coincide. Under the inclusion $\sV(\AA_{\sO_S}) = \prod_{v \in S } V(K_v) \times \prod_{v \in \Omega_K \setminus S} \sV(\sO_v) \subset V(\AA_K)$, one can restrict the Brauer--Manin pairing to obtain the integral Brauer--Manin set $\sV(\AA_{\sO_S})^{\Br} = \sV(\AA_{\sO_S}) \cap V(\A_K)^{\Br}$.
This gives a chain of inclusions 
\[
  \sV(\sO_S) \subset \sV(\AA_{\sO_S})^{\Br} \subset \sV(\AA_{\sO_S}).
\]
For $T \subset \Omega_K$ finite, let $\sV(\AA_{\sO_{S \cup T}}^T)^{\Br}$ denote the projection of $\sV(\AA_{\sO_{S \cup T}})^{\Br}$ to $\sV(\AA_{\sO_S}^T)$. We then have a corresponding chain of inclusions for $T$-adelic points:
\[
  \sV(\sO_{S \cup T}) \subset \sV(\AA_{\sO_{S \cup T}}^T)^{\Br} \subset \sV(\AA_{\sO_S}^T).
\]

\begin{note}
Note that $\sV(\AA_{\sO_{S\cup T}}^T) = \sV(\AA_{\sO_S}^T)$, but that the same does not necessary hold for projections of Brauer-Manin sets. It is possible that $\sV(\AA_{\sO_S}^T)^{\Br} \subsetneq \sV(\AA_{\sO_{S\cup T}}^T)^{\Br}$ if the local invariant maps of elements of $\Br V$ take values on $K_v$-points which are not obtained on $\sO_v$-points at places $v \in T$. To make things worse, $\sV(\AA_{\sO_S}^T)^{\Br}$ may not contain $\sV(\sO_{S \cup T})$ for the same reason. This is also the situation we find ourselves when it comes to semi-integral points. We make this note explicit with the following example. 

  Let $K = \QQ$, $S = \{\infty\}$, $T = \{5\}$ and $V$ be the smooth two-dimensional affine quadric
  \[
    V : 5x^2 - 25y^2 + 16z^2 = 1.
  \]
  Let $\sV$ be the natural choice of $\sO_S$-model of $V$. We have $\sO_S = \ZZ$ and $\sO_{S \cup T} = \ZZ[1/5]$. As we show in Section~\ref{section:fail} the quaternion algebra $\sA = (1 - 4z, 5)$ generates $\Br V/\Br \QQ = \ZZ / 2\ZZ$. Moreover, $\inv_p \sA : \sV(\Zp) \rightarrow \QQ/\Z$ vanishes at $p \neq 5$, while $\inv_5 \sA$ is equal to 1/2 at any point in $\sV(\ZZ_5)$. Thus $\sV(\AA_{\sO_S})^{\Br} = \emptyset$, hence $\sV(\AA_{\sO_S}^T)^{\Br} = \emptyset$ as well. On the other hand, it is clear that $P = (1, 2/5, 0) \in \sV(\sO_{S \cup T}) \subseteq \sV(\AA_{\sO_{S \cup T}})^{\Br}$, hence it projects down to a point in $\sV(\AA_{\sO_{S\cup T}}^T)^{\Br}$. We conclude that $\sV(\AA_{\sO_S}^T)^{\Br} \subsetneq \sV(\AA_{\sO_{S\cup T}}^T)^{\Br}$ and $\sV(\sO_{S \cup T}) \not\subseteq \sV(\AA_{\sO_{S}}^T)^{\Br}$.
\end{note}
  
It should be noted that if $V$ is not proper, then the sum of the local invariant maps is still well-defined, since for any element $\sA \in \Br V$, there exists a finite set of places outside of which $\inv_v \circ \ev_{\sA}$ vanishes on $\sV(\sO_v)$. Thus $\sV(\AA_{\sO_{S \cup T}}^T)^{\Br} \subset \sV(\AA_{\sO_S}^T)$ is closed. This leads to the following definitions.

\begin{mydef}
  Let $V$ be a variety over a number field $K$ with an $\sO_S$-integral model $\sV$.
  \begin{enumerate}[label=(\roman*)]
  \item We say that $\sV$ has a \emph{Brauer--Manin obstruction to the $S$-integral Hasse principle} if $\sV(\A_{\sO_S}) \neq \emptyset$ but $\sV(\A_{\sO_S})^{\Br} = \emptyset$.
  \item Let $T \subset \Omega_K$ be finite subset. We say that $\sV$ has a \emph{Brauer--Manin obstruction to integral strong approximation off $T$} if $\sV(\AA_{\sO_{S \cup T}}^T)^{\Br} \neq \sV (\AA_{\sO_S}^T)$.
  \end{enumerate}
\end{mydef}

We give an example of a cubic surface with Darmon points for any weight but no integral points because of a Brauer--Manin obstruction to the integral Hasse principle.

\begin{example}
  \label{ex:DMBO-does-not-imply-IBMO}
  Let $X \subset \PP^3_{\Q}$ be the cubic surface given by
  \[
    X: \quad y^2 z - (4x - z)(16x^2 + 20xz + 7z^2) - t^3 = 0.
  \] 
  Let $m \ge 1$ be an integer and denote by $D_m$ the $\Q$-divisor $(1 - 1/m)Z(t)$. Fix the integral model $(\sX, \sD_m)$ of $(X, D_m)$ given by the same equations over $\ZZ$. As explained in \cite[\S2]{KT} the quaternion Azumaya algebra $(-t^3/y^3, z/y)$ is a non-trivial Brauer element which gives a Brauer--Manin obstruction for integral points on $\sX \setminus \sD_{\red}$, since the local invariant maps vanish at all places except the $2$-adic one, where its value is $1/2$. However, for any $m \in \ZZ_{\ge 1}$ there is a point $[-4^{m-1}: 1: 0: 4^m] \in (\sX, \sD_m)^{\dd}(\ZZ)$.
\end{example}

\subsection{Orbifold Brauer--Manin}
To begin with, we have the inclusions
\[
  \sU\(\AA_{\sO_S}\) \subset \(\sX, \sD\)_{\str}^*\(\AA_{K, S}\) \subset U(\AA_K).
\]
Both of the Brauer--Manin obstructions for $K$-rational and $S$-integral points on $U$ are defined with respect to the same Brauer group, $\Br U$, but come with different choices of adelic space encoding the nature of the global points. This allows us to define a good notion of a Brauer--Manin obstruction for strict semi-integral points.

\begin{mydef} \label{def:SIBMO}
The inclusion $(\sX, \sD)_{\str}^*(\AA_{K, S}) \subset U(\AA_K)$ allows us to pair any strict semi-integral adelic point with $\Br U$ by restricting the existing Brauer--Manin pairing $\Br U \times U(\AA_K) \rightarrow \QQ/\ZZ$. We define the Brauer--Manin set $(\sX, \sD)_{\str}^*(\AA_{K, S})^{\Br} = (\sX, \sD)_{\str}^*(\AA_{K, S})^{\Br U}$ for strict $*$-points to be the right kernel of this pairing. Note that a point in $(\sX, \sD)_{\tr}^*(\AA_{K, S})$ may not pair with an element of $\Br U$, simply because such a point does not lie on $U$. Instead, we define the non-strict part of the Brauer--Manin set $\(\sX, \sD\)_{\tr}^*\(\AA_{K, S}\)^{\Br}$ as the preimage of $\bigcup_{\epsilon_\alpha \neq 0, 1} D_{\alpha, \fin}(\A_K)^{\Br D_{\alpha, \fin}}$ under the natural inclusion $\(\sX, \sD\)_{\tr}^*\(\AA_{K, S}\)\hookrightarrow \bigcup_{\epsilon_\alpha \neq 0, 1} D_{\alpha, \fin}(\A_K)$. We then define
\[
  \(\sX, \sD\)^*\(\AA_{K, S}\)^{\Br}
  := \(\sX, \sD\)_{\str}^*\(\AA_{K, S}\)^{\Br} \coprod \(\sX, \sD\)_{\tr}^*\(\AA_{K, S}\)^{\Br}.  
\]
\end{mydef}

Given subsets $B \subset \Br U$ and $B_\alpha \subset \Br D_{\alpha, \fin}$ for $\varepsilon_\alpha \neq 0,1$, we can define an intermediate obstruction set $(\sX,\sD)^*(\A_{K, S})^{B, (B_\alpha)_\alpha}$ by restricting the Brauer--Manin pairing to only $B$ for the strict points and taking the preimage of $\bigcup_{\epsilon_\alpha \neq 0, 1} D_{\alpha, \fin}(\A_K)^{B_\alpha}$ for the non-strict points. If $\(\sX, \sD\)_{\tr}^*\(\AA_{K, S}\)^{(B_\alpha)_\alpha}$ denotes the preimage of $\bigcup_{\epsilon_\alpha \neq 0, 1} D_{\alpha, \fin}(\A_K)^{B_\alpha}$ under the natural inclusion $\(\sX, \sD\)_{\tr}^*\(\AA_{K, S}\)\hookrightarrow \bigcup_{\epsilon_\alpha \neq 0, 1} D_{\alpha, \fin}(\A_K)$, then
\[
  (\sX,\sD)^*(\A_{K, S})^{B, (B_\alpha)_\alpha} := (\sX,\sD)_{\str}^*(\A_{K, S})^B \coprod \(\sX, \sD\)_{\tr}^*\(\AA_{K, S}\)^{(B_\alpha)_\alpha}.
\]

To study local-global principles, it will be useful to work with the projections of the various Brauer--Manin sets to the $T$-adeles.

\begin{mydef} \label{def:Br-T-adeles}
Given subsets $B \subset \Br U$ and $B_\alpha \subset \Br D_{\alpha, \fin}$ for $\varepsilon_\alpha \neq 0,1$, we define $(\sX, \sD)^*(\A_{K, S \cup T}^T)^{B, (B_\alpha)_\alpha}$ to be the projection of $(\sX, \sD)^*(\A_{K, S \cup T})^{B, (B_\alpha)_\alpha}$ to $(\sX, \sD)^*(\A_{K, S}^T)$ and similarly for strict, non-strict, integral and rational points.
\end{mydef}

\begin{remark}
\label{rem:BMOinterpolation}
Taking $D=0$, we recover the Brauer--Manin obstruction for rational points on $X$, while taking all $\epsilon_\alpha = 1$ (i.e.\ $D = D_{\red} = D_{\inf}$) we recover the integral Brauer--Manin obstruction for $\calU$.
\end{remark}

\begin{note} \label{note:proj}
Note that $(\sX, \sD)^*(\A_{K, S \cup T}^T)^{\Br}$ is the projection of $(\sX, \sD)^*(\A_{K, S \cup T})^{\Br}$ to the T-adeles $(\sX, \sD)^*(\A_{K, S}^T)$; one might otherwise suspect that $(\sX, \sD)^*(\A_{K, S \cup T}^T)^{\Br}$ denotes the set of all $T$-adelic points pairing to zero with the Brauer group. However, when defining the Brauer--Manin obstruction, one must take all places under consideration, otherwise not all rational points need lie in the so-defined $T$-adelic Brauer--Manin set. We have decided to incorporate the abuse of notation $(\sX, \sD)^*(\A_{K, S \cup T}^T)^{\Br}$ for the $T$-adelic projection of the Brauer--Manin set as it makes our notation throughout the article lighter.
\end{note}

\begin{remark}\label{rmk:BMO}
There is a slew of different natural obstruction sets one can define using the stratification of Remark \ref{rem:adelicspace}. Let $\Sigma$ be a collection of subsets of $\{1,\ldots,r\}$ as before such that $\bigcup_{A\in\Sigma}D_A=X\setminus D_{\inf}$. Then we may take
\[
(\calX,\calD)_{\Sigma}^*(\A^T_{K, S \cup T})^{\Br}=\bigcup_{A\in\Sigma}(\calX,\calD)_A^*(\A^T_{K, S \cup T})^{\Br D_A}
\]
where the last set is defined by taking adelic points orthogonal to $\Br D_A$ under the natural inclusion $(\sX,\sD)_A^*(\A^T_{K, S})\subset D_A(\A^T_K)$. If $\Sigma$ consists of all possible subsets, then the obstruction is strongest, but at the cost of being more computationally heavy as one needs to compute Brauer groups for smaller open sets.
\end{remark}

\begin{remark}
Under the ``one component at a time'' perspective of Remark \ref{rem:1comp}, the natural way of studying obstructions to local-global principles would be to consider the Brauer--Manin obstruction for each one-component sub-orbifold and compute the intersection $\cap_{i=1}^r \(\sX,\sD_i\)\(\A_{K, S}\)^{\Br}$ as an alternative Brauer--Manin set.
\end{remark}

The pairing for semi-integral points above is well-defined. If $(P_v)$ is a strict semi-integral adelic point, then all but finitely many $P_v$ lie in $\sU(\sO_v)$, so, pairing with $\sA \in \Br U$, the local invariant map will vanish for all but finitely many places, hence $\sum_{v \in \Omega_K} \inv_v \(\sA\(P_v\)\)$ is well-defined. A similar argument applies to non-strict semi-integral adelic points.

A non-strict global semi-integral point will pair to zero with the Brauer group of each $D_{\alpha, \fin}$ that it lies on. The inclusion $(\sX, \sD)_{\str}^*(\sO_S) \subset U(K)$ guarantees pairing to zero between strict global semi-integral points and $\Br U$. Thus we get a sequence of inclusions 
\[
  \(\sX, \sD\)^*\(\sO_S\) \subset \(\sX, \sD\)^*\(\AA_{K, S}\)^{\Br} \subset \(\sX, \sD\)^*\(\AA_{K, S}\),
\]
and therefore the Brauer--Manin obstruction can obstruct the Hasse principle for semi-integral points. This observation leads to the following definition.

\begin{mydef} \label{def:orbifoldBMOtoHP}
Let $(X,D)$ be a Campana orbifold over a number field $K$ with $\O_S$-model $(\calX,\calD)$ as above. We say that there is a \emph{Brauer--Manin obstruction to the Hasse principle for $*$-points} if $(\calX,\calD)^*(\A_{K, S}) \neq \emptyset$ but $(\calX,\calD)^*(\A_{K, S})^{\Br} = \emptyset$. Otherwise we say that there is no Brauer--Manin obstruction to the Hasse principle for $*$-points.
\end{mydef}

There is one striking difference between the above Brauer--Manin pairing and the classical Brauer--Manin pairings for rational points on proper varieties and for integral points. In the cases of rational and integral points, the Brauer--Manin pairings were defined in a way such that the set of places where $\inv_v \circ \ev_{\sA}$ is non-constant depends only on the Brauer element chosen. Moreover, for each Brauer element this set is finite. Here, this set of places can be infinite due to the definition of semi-integral points. Indeed, Harari proved in his thesis \cite[Thm. 2.1.1]{HARA} that the Brauer--Manin pairing can be non-trivial at infinitely many places for ramified Brauer classes. We show that similar statements hold for semi-integral points below. The proof presented here follows \cite[Thm.~13.4.1]{CTS2}.

Recall that $V := X \setminus D_{\inf}$ and that the weight of $D_\alpha\subset D_{\red}$ is $m_\alpha$. Given an integral divisor $Z\subset V$, let $\partial_Z\colon \Br U\to H^1(k(Z),\Q/\Z)$ denote the residue map.

\begin{mydef} \label{def:BrXD}
Set
\[
\Br (X,D)=\{\calA\in\Br U \ : \ m_\alpha\partial_{D_{\alpha,\fin}}(\calA)=0\ \forall \alpha \in \scrA \text{ with } \epsilon_\alpha \neq 0,1\}.
\]
\end{mydef}

\begin{remark}\label{rmk:rootstack}
When $D = \epsilon_\alpha D_\alpha$ with $\epsilon_\alpha < 1$, one may interpret $\Br\left(X,D\right)$ as the Brauer group of the associated root stack $\mathscr{X} = X[\sqrt[m_\alpha]{D_\alpha}]$. Indeed, let $\pi: \mathscr{X} \rightarrow X$ be the obvious morphism; denoting by $\mathscr{U}$ the preimage of $U = X \setminus D_\alpha$ in $\mathscr{X}$ (which is isomorphic to $U$) and setting $\mathscr{E}_\alpha = \mathscr{X} \setminus \mathscr{U}$, we have the following commutative diagram:
\begin{center}
\begin{tikzcd}
\Br U \arrow[r,"\partial_{D_\alpha}"] \arrow[d] & H^1\left(D_\alpha,\mathbb{Q}/\mathbb{Z}\right) \arrow[d,"\cdot m_\alpha"] \\
\Br \mathscr{U} \arrow[r,"\partial_E"] & H^1\left(\mathscr{E}_\alpha,\mathbb{Q}/\mathbb{Z}\right).
\end{tikzcd}
\end{center}
Then Grothendieck's purity theorem together with the fact that the left vertical arrow is an isomorphism implies $\Br\left(X,D\right) \cong \Br \mathscr{X}$.
\end{remark}

\begin{theorem} \label{thm:HarariA}
Let $(X,D)$ be a Campana orbifold over a number field $K$ with $\O_S$-model $(\calX,\calD)$. Let $U=X\setminus D_{\red}$ and $V = X \setminus D_{\inf}$. Let $\sA\in\Br U\setminus \Br V$ (resp. $\calA\in\Br U\setminus \Br(X,D)$). Then for infinitely many places $v\notin S$, there exists $P_v\in (\calX,\calD)^{\cc}_{\str}(\O_v)$ (resp. $P_v\in (\calX,\calD)^{\dd}_{\str}(\O_v)$) such that $\sA(P_v)\neq0$.
\end{theorem}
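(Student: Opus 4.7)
The plan is to adapt the proof of the classical Harari formal lemma \cite[Thm.~13.4.1]{CTS2} so that the local points produced at each place lie in the strict part of the adelic space and satisfy the prescribed semi-integral condition.

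Since $\sA \notin \Br X$ (resp.\ $\sA \notin \Br(X,D)$), the definition of $\Br(X,D)$ furnishes a component $D_\alpha \subset D_{\red}$ for which $\chi \coloneqq \partial_{D_\alpha}(\sA) \in H^1(K(D_\alpha),\Q/\Z)$ is non-zero (resp.\ for which $m_\alpha\chi \neq 0$). Let $m'$ denote the order of $\chi$. I would choose a positive integer $n$ with $n\chi \neq 0$ that moreover satisfies $n \geq m_\alpha$ in the Campana case and $m_\alpha \mid n$ in the Darmon case. Such $n$ exists: in the Campana case some $n \in \{m_\alpha,\dots,m_\alpha + m' - 1\}$ is not divisible by $m'$ since $m' \geq 2$, and in the Darmon case $n = m_\alpha$ already works by hypothesis.

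Next I would enlarge $S$ to a finite $S' \supset S$ so that $\calX$ is smooth over $\sO_{S'}$, the components of $\calD$ form a strict normal crossings divisor, and $\chi$ spreads out to an étale class of order $m'$ on some dense open $\calV \subset \calD_\alpha$ disjoint from every other $\calD_\beta$. Applying Chebotarev's density theorem to the associated cyclic degree-$m'$ étale cover of $\calV$ yields infinitely many places $v \notin S'$ together with closed points $\bar{y} \in \calV(\F_v)$ for which $\chi(\Frob_{\bar{y}})$ has order $m'$ in $\Q/\Z$; in particular $n\chi(\Frob_{\bar{y}}) \neq 0$. For each such pair, Hensel's lemma applied in local coordinates in which $\calD_\alpha$ is cut out by $t = 0$ near $\bar{y}$ produces $P_v \in \calX(\sO_v)$ with reduction $\bar{y}$ and $t(P_v) = \pi_v^n$ up to a unit, so $n_v(\calD_\alpha,P_v) = n$. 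Since $\bar{y}$ lies on no other $\calD_\beta$, one has $n_v(\calD_\beta,P_v) = 0$ for $\beta \neq \alpha$, and $P_v \in U(K_v)$ is strict since $n < \infty$. The choice of $n$ thus places $P_v$ in $(\calX,\calD)^{\cc}_{\str}(\sO_v)$ (resp.\ $(\calX,\calD)^{\dd}_{\str}(\sO_v)$).

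Finally, the tame symbol formula yields
\[
  \inv_v \ev_\sA(P_v) = n \cdot \chi(\Frob_{\bar{y}}) + \inv_v\ev_{\sA'}(\bar{y}),
\]
where $\sA'$ encodes the unramified contribution of $\sA$ near $\calD_\alpha$; the first summand is non-zero by construction. The main obstacle, inherited from the classical setting, is preventing systematic cancellation by the second (unramified) summand; this is resolved exactly as in \cite[Thm.~13.4.1]{CTS2} by refining the Chebotarev cover to track both $\chi$ and the class of $\sA'$ simultaneously, so that infinitely many pairs $(v,\bar{y})$ deliver $\sA(P_v) \neq 0$ as required.
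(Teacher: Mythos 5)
Your proposal follows essentially the same blueprint as the paper's proof (which adapts \cite[Thm.~13.4.1]{CTS2}), and it is broadly correct, but with two presentational differences worth flagging. First, the paper does not apply Chebotarev and Hensel directly on $\calD_\alpha \subset \calX$; it restricts $\sA$ to a suitably chosen smooth affine curve $C \subset X$ meeting $D_{\red}$ transversely at a single closed point $P \in D_\alpha$, passes to an \'etale cover $q: Y \to C$ over which the residue $\chi$ lifts to a global class $\xi$, and works there. This makes the local uniformiser $f$ for $P$ and the cyclic twist $(f,\xi)$ unambiguous, whereas on all of $X$ the decomposition of $\sA$ into a ramified part and an ``unramified contribution near $\calD_\alpha$'' is not canonical --- your $\sA'$ needs to be pinned down precisely as $\sA - (t,\xi)$ for an explicit local equation $t$ of $\calD_\alpha$ and an explicit lift $\xi$ of $\chi$ over an \'etale neighbourhood, which is exactly what the curve accomplishes. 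Second, your closing paragraph overcomplicates the treatment of the unramified summand: there is no ``systematic cancellation'' to rule out and no need to ``refine the Chebotarev cover to track $\sA'$.'' Once $\sA'$ is unramified and has been spread out to a class in $\Br \calY$ (after enlarging $S$), it pairs to zero with every $\O_v$-point because it pulls back through $\Br \O_v = 0$; the second summand in your invariant formula vanishes identically. Also, that second summand should be $\inv_v \ev_{\sA'}(P_v)$, not $\inv_v \ev_{\sA'}(\bar{y})$ --- one evaluates the Brauer class at the $\O_v$-point, not at its reduction. Your selection of $n$ (taking $n \in \{m_\alpha,\dots,m_\alpha+m'-1\}$ not divisible by $m'$ in the Campana case, and $n = m_\alpha$ in the Darmon case) is a clean way to package what the paper achieves by choosing $v(f(N_v))$ among a family of constructed points, and both routes are valid.
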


\begin{proof}
The proof is largely the same as Harari's formal lemma as given in \cite[Thm.~13.4.1]{CTS2}. The only deviation will be in the end when we introduce the Campana/Darmon conditions. We reproduce a sketch of the proof given in \emph{loc.\ cit.}\ for the reader's convenience.

Let $D_{\alpha,\fin} \subset D_{\fin}$ be an irreducible component where $\sA$ is ramified (if $\calA\notin \Br (X,D)$, choose $D_\alpha$ so that $m_\alpha \partial_{D_{\alpha,\fin}}(\calA)\neq0$). Following the proof of \cite[Thm.~13.4.1]{CTS2}, there exists a connected smooth affine curve $C=\Spec A\subset X$ where $C\cap D_\alpha=P$ is a transverse intersection and a closed point cut out by $f\in A$, and $P$ lies on no other component of $D$. The restriction of $\sA$ to $C$ (which we will also denote by $\calA$) has non-trivial residue $\chi\colonequals \partial_P(\sA)\in H^1(k(P),\Q/\Z)$ at $P$ whose order equals that of $\partial_{D_{\alpha,\fin}}(\calA)$. There exists an \'{e}tale open $q: Y\to C$ and $\xi\in H^1(Y,\Q/\Z)$ such that $\xi$ is the restriction of $\chi$ and $Q=q^{-1}(P)\xrightarrow\sim P$ is an isomorphism.

Let $\sA'=\sA-(f,\xi)\in \Br (Y\setminus Q)$, where $(f,\xi)$ is a cyclic algebra. This is unramified over $Q$, hence $\sA'\in \Br Y$. Let $\calC\subset \calX,\calY$ be models of $C,Y$ over $\calO_S$ and $\calP,\calQ$ be closures of $P,Q$ in $\calC,\calY$ respectively. Enlarging $S$ as necessary, we may assume that (i) $Y\to C$ spreads out to $\calY\to \calC$, (ii) $\xi$ and $\sA'$ come from $H^1(\calY,\Q/\Z)$ and $\Br \calY$ respectively, (iii) $f\in \O_S[\calC]$, (iv) $\calQ$ is integral and $\calQ\xrightarrow\sim \calP$, and (v) $\calP\to \Spec \O_S$ is finite and \'{e}tale.

By Chebotarev's density theorem, there are infinitely many $v\in \Omega_K \setminus S$ and $w \in \Omega_{k(P)}$ of degree $1$ over $v$ such that the restriction of $\chi$ in $H^1(k(P)_{w},\Q/\Z)$ is non-trivial.

Choosing such $v$, there exists an $\O_{v}$-point $N_{v}^0$ of $\calY$ inside the closed subset $\calQ$ mapping to a point $M_{v}^0\in \calC(\O_{v})$. The existence of such a point together with smoothness imply that there exist points $N_{v}\in \calQ(\O_{v})$ where $f(N_{v})$ has arbitrarily high $v$-adic valuation.

Let $n$ be the order of $\chi$. Choose a point $N_{v}\in \calY(\O_v)$ such that $v(f(N_{v}))\not\equiv 0\bmod n$. By the proof of \cite[Thm.~13.4.1]{CTS2}, for $M_{v}=q(N_{v})$, we have
\[
\inv_{v}\sA(M_{v}) \equiv v(f(N_{v})) \not\equiv 0 \bmod n.
\]
\begin{enumerate}
\item In the Campana case, the intersection multiplicity of $M_v$ with $D_\alpha$ is precisely $v(f(N_v))$ which can be chosen larger than $m_\alpha$ to satisfy the Campana condition at $D_\alpha$. Note that $M_v$ does not meet the other components of $D_{\red}$ since $M_v\subset \calC$.

\item In the Darmon case, note that we are restricted to having $v(f(N_v))\equiv 0\bmod m_\alpha$. However, our initial assumption implies $m_\alpha \chi\neq0$, so we may still choose $N_v$ so that $v(f(N_v))\not\equiv0\bmod n$. \qedhere
\end{enumerate}
\end{proof}

The previous theorem can be used to prove a statement for a finite collection of Brauer classes using a combinatorial argument as is shown in \cite[Cor. 2.6.1]{HARA}, \cite{CTS1}.

\begin{theorem}
\label{thm:HarariB}
Let $(X,D)$ be a Campana orbifold over a number field $K$ with $\O_S$-model $(\calX,\calD)$. Let $U=X\setminus D_{\red}$ and $V = X \setminus D_{\inf}$. Let $B\subset\Br U$ be a finite subgroup. Let $(P_v)\in (\calX,\calD)^{\cc}_{\str}(\A_{K, S})^{B\cap \Br V}$ (resp. $(P_v)\in (\calX,\calD)^{\dd}_{\str}(\A_{K, S})^{B\cap \Br (X,D)}$). Then for any finite set of places $S'\supset S$,  there exists $(Q_v)\in (\calX,\calD)^{\cc}_{\str}(\A_{K, S})^{B}$ (resp. $(Q_v)\in (\calX,\calD)^{\dd}_{\str}(\A_{K, S})^{B}$) with $Q_v=P_v$ for $v\in S'$. 
\end{theorem}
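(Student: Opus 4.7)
The plan is to adapt the classical combinatorial upgrade of Harari's formal lemma from a single Brauer class to a finite subgroup (see \cite[Cor.~2.6.1]{HARA}, equivalently the argument in \cite[Thm.~13.4.1]{CTS2}) to the semi-integral context, using Theorem~\ref{thm:HarariA} as the one-class input.

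Set $B_0 = B \cap \Br X$ in the Campana case and $B_0 = B \cap \Br(X,D)$ in the Darmon case, so the hypothesis reads $\{P_v\} \in (\sX,\sD)^*_{\str}(\A_K)^{B_0}$ and the desired conclusion is $\{M_v\} \in (\sX,\sD)^*_{\str}(\A_K)^B$. Since enlarging $S'$ only tightens the conclusion (the new $T$, being disjoint from the enlarged $S'$, is still disjoint from the original), we may assume $S'$ is large enough that for every $v \notin S'$: (i) each $\sA \in B_0$ extends to the relevant integral model so that $\inv_v \sA(N) = 0$ for all $N \in (\sX,\sD)^*_{\str}(\O_v)$, and (ii) $P_v \in \sU(\O_v)$. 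In the Campana case, (i) is the standard fact that $\sA \in \Br X$ extends to $\Br \sX_{\O_v}$ at good-reduction places and evaluates in $\Br \O_v = 0$ on $\sX(\O_v)$; in the Darmon case, it is the analogous vanishing for the iterated root stack, via the identification $\Br(X,D) \cong \Br \scrX$ of Remark~\ref{rmk:rootstack}.

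Define $\psi \in \Hom(B, \Q/\Z)$ by $\psi(\sA) = \sum_v \inv_v \sA(P_v)$. By hypothesis $\psi|_{B_0} = 0$, so $\psi$ descends to $\bar\psi \in G^\vee$ for the finite abelian group $G = B/B_0$. For each $v \notin S'$ and $N \in (\sX,\sD)^*_{\str}(\O_v)$, the character $c_{v, N}(\sA) = \inv_v \sA(N) - \inv_v \sA(P_v)$ vanishes on $B_0$ by (i), hence descends to $\bar c_{v, N} \in G^\vee$. Let $\Xi \subset G^\vee$ be the subgroup generated by all such $\bar c_{v, N}$. Once $\Xi = G^\vee$ is established, write $-\bar\psi = \sum_{v \in T} \bar c_{v, N_v}$ for some finite $T \subset \Omega_K \setminus S'$ and local semi-integral points $N_v$; setting $M_v = N_v$ for $v \in T$ and $M_v = P_v$ otherwise yields an adelic point agreeing with $\{P_v\}$ on $S'$ and satisfying $\sum_v \inv_v \sA(M_v) = 0$ for every $\sA \in B$.

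To prove $\Xi = G^\vee$ it suffices, by Pontryagin duality, to show $\Xi^\perp = 0$ in $G$. If some nonzero $\bar\sA \in \Xi^\perp$ lifts to $\sA \in B \setminus B_0$, then $\inv_v \sA(N) = \inv_v \sA(P_v)$ for every $v \notin S'$ and every $N \in (\sX,\sD)^*_{\str}(\O_v)$. Applying Theorem~\ref{thm:HarariA} to $\sA$ (which lies in $\Br U \setminus \Br X$ in the Campana case or $\Br U \setminus \Br(X,D)$ in the Darmon case) yields infinitely many $v \notin S'$ admitting some $N_v \in (\sX,\sD)^*_{\str}(\O_v)$ with $\inv_v \sA(N_v) \neq 0$; at these places we would force $\inv_v \sA(P_v) \neq 0$, contradicting the fact that $\inv_v \sA(P_v) = 0$ at cofinitely many places (by (ii) and the local spreading-out of $\sA$ to $\Br \sU$). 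The main obstacle is part (i) of the setup in the Darmon case — verifying that classes in $\Br(X,D)$ evaluate trivially on local Darmon points at good-reduction places via the root-stack description — after which the rest is standard finite-abelian-group duality transposed verbatim from the rational-points setting.
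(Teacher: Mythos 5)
Your overall framework---reduce to the finite group $G = B/B_0$, introduce the correction characters $\bar c_{v,N}$, and show the annihilator of the subgroup they generate is trivial via Theorem~\ref{thm:HarariA}---matches the argument the paper delegates to \cite[Thm.~13.4.3]{CTS2}. Two steps, however, need more care. The passage from $\Xi = G^\vee$ to ``write $-\bar\psi = \sum_{v \in T} \bar c_{v,N_v}$ over distinct places'' is not a formal consequence: with $I_v$ the image of $N \mapsto \bar c_{v,N}$, the set $\Sigma = \sum_v I_v$ of one-correction-per-place sums is a priori only a subset of $G^\vee$ containing $0$, not a subgroup, so $\Xi = G^\vee$ alone does not put $-\bar\psi$ in $\Sigma$. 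What saves the argument is the \emph{infinitude} of places supplied by Theorem~\ref{thm:HarariA}: choosing a finite $W$ with $\Sigma = \Sigma_W$, every $I_v$ with $v \notin W$ must land in the translation stabilizer of $\Sigma$ (else $\Sigma_{W \cup \{v\}} \supsetneq \Sigma$), and since for every nonzero $\bar\sA \in G$ infinitely many such $v$ have $\bar\sA \notin I_v^\perp$, that stabilizer has trivial annihilator, i.e.\ equals $G^\vee$; as $0 \in \Sigma$ this forces $\Sigma = G^\vee$. Your write-up skips this, and without it the displayed representation is unjustified.

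Second, your ``part (i)'' in the Darmon case---that classes of $\Br(X,D)$ evaluate trivially on local Darmon points at almost all places---is precisely Lemma~\ref{lema:trivpairing}, which the paper proves by a direct residue computation: the pullback $\calP^*\partial_{D_\alpha}(\calA)$ factors through multiplication by the intersection number $n_\alpha$, and $m_\alpha \mid n_\alpha$ combined with $m_\alpha\partial_{D_\alpha}(\calA) = 0$ kills the residue, whence $\calP^*\calA$ is unramified and lies in $\Br\sO_v = 0$. Your proposed route via Remark~\ref{rmk:rootstack} is only formulated in the paper for a single-component $D$ and would require the iterated root-stack identification of $\Br(X,D)$; that is plausible but not developed here, and the residue argument is both simpler and handles general $D$ directly. (Note also that Lemma~\ref{lema:trivpairing} appears in the text after Theorem~\ref{thm:HarariB}, so any careful reconstruction of the proof should invoke or reprove it.)
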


\begin{proof}
The same proof as \cite[Thm.~13.4.3]{CTS2} using Theorem \ref{thm:HarariA} applies here. To sum up ideas, if for example Campana points are under consideration, we first enlarge $S'$ so that if $v \notin S'$, then $\inv_v \sA(P_v) = 0$ for all $\sA \in B$ and $\inv_v \sA'(M_v) = 0$ for all $\sA' \in B \cap \Br V$ and $M_v \in \sV(\sO_v)$. It then suffices to find a set $T$ disjoint from $S'$ such that after modifying $(P_v)$ with local semi-integral points at places in $T$ we would, for all $\sA \in B$, have
\[
  \sum_{v \in S \cup T} \inv_v \sA(P_v) = 0.
\]
We can do so as in the proof of \cite[Thm.~13.4.3]{CTS2} in view of Theorem \ref{thm:HarariA}.
\end{proof}

The following lemma is a converse to Theorem \ref{thm:HarariA} for Darmon points when each $D_\alpha$ is regular away from its intersection with $D_{\inf}$.

\begin{lemma} \label{lema:trivpairing}
Let $\calA\in \Br (X,D)$ and suppose that $D_{\alpha,\fin}$ is regular for all $\epsilon_{\alpha} \neq 0,1$. For all but finitely many places $v$, the image of the map $\inv\circ \calA\colon (\calX,\calD)^{\dd}_{\str}(\O_v)\to \Q/\Z$ is zero.
\end{lemma}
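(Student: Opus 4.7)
The plan is to show that, for all but finitely many places $v$, the residue of $\ev_\calA(P) \in \Br K_v$ at $v$ vanishes for every strict Darmon point $P$; since $\Br \sO_v = 0$ ($\sO_v$ being Henselian with finite residue field), this then forces $\inv_v(\ev_\calA(P)) = 0$.

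First I would enlarge $S$ so that on the $\sO_S$-model $(\calX,\calD)$ the following hold: each component $\calD_\alpha$ with $\epsilon_\alpha \neq 0$ is smooth over $\sO_S$, the sum $\sum_\alpha \calD_\alpha$ together with every special fibre is a strict normal crossings divisor on $\calX$, the residue characteristics of places in $\Omega_K \setminus S$ are coprime to every finite $m_\alpha$, and each residue $\chi_\alpha \colonequals \partial_{D_\alpha}(\calA) \in H^1(K(D_\alpha),\Q/\Z)$ extends to a class in $H^1(\calD_\alpha,\Q/\Z)$ that can be pulled back along any $\F_v$-point of $\calD_\alpha$ for $v \notin S$. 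By hypothesis $\calA \in \Br(X,D)$ we have $m_\alpha \chi_\alpha = 0$ for every $\alpha$, while for $\alpha$ with $\epsilon_\alpha = 1$ the unramifiedness of $\calA$ along $D_\alpha$ forces $\chi_\alpha = 0$ outright.

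Next, for $v \notin S$ and $P \in (\calX,\calD)^{\dd}_{\str}(\sO_v) \subset U(K_v)$, properness of $\calX$ yields an extension $\calP \in \calX(\sO_v)$ with reduction $\bar P \in \calX(\F_v)$. The key step is to establish the residue formula
\[
\partial_v(\ev_\calA(P)) \;=\; \sum_{\alpha \,:\, \bar P \in \calD_\alpha} n_v(\calD_\alpha,P)\cdot \bar P^*(\chi_\alpha) \;\in\; H^1(\F_v,\Q/\Z),
\]
which in the case $\dim X = 1$ reduces to a direct cyclic-algebra computation in the two-dimensional regular local ring $\sO_{\calX_{\sO_v},\bar P}$, and in general follows from Gabber's absolute purity applied to the regular scheme $\calX_{\sO_v}$ together with strict normal crossings of $\calD_{\red} \cup \calX_v$. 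Indices $\alpha$ with $\epsilon_\alpha = 1$ contribute trivially since the Darmon condition forces $n_v(\calD_\alpha,P) = 0$ for such $\alpha$.

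Finally, the Darmon condition gives $n_v(\calD_\alpha,P) = m_\alpha k_\alpha$ with $k_\alpha \in \Z_{\geq 1}$ for each $\alpha$ in the sum, and $\calA \in \Br(X,D)$ gives $m_\alpha \bar P^*(\chi_\alpha) = 0$ in $H^1(\F_v,\Q/\Z)$; hence every term vanishes and $\partial_v(\ev_\calA(P)) = 0$. This places $\ev_\calA(P)$ in $\Br \sO_v = 0$, completing the argument. I expect the main obstacle to be the residue formula itself: the one-dimensional case is a routine cyclic-algebra manipulation, but in higher dimensions one must invoke Gabber's purity and carefully check that the residue at $v$ of the pulled-back class is expressed by the displayed intersection-multiplicity-weighted sum of pulled-back residues along the horizontal components meeting $\bar P$.
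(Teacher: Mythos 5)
Your proposal is correct and follows essentially the same route as the paper: pull back $\calA$ along a Darmon point, compute the residue at the closed point of $\Spec\O_v$ as an intersection-multiplicity-weighted sum of the pulled-back residues $\partial_{D_\alpha}(\calA)$, observe that the Darmon divisibility condition $m_\alpha \mid n_v(\calD_\alpha,P)$ together with $m_\alpha\partial_{D_\alpha}(\calA)=0$ kills each term, and conclude by $\Br\O_v=0$. The paper states the residue computation more compactly (absorbing the multiplicity into the pullback map $\calP^*$), while you unfold it into the explicit formula and flag the purity input needed in higher dimension, but the argument is the same.
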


\begin{proof}
Let $v$ be a place. We may assume $\calA$ extends to $\Br \(\calU_{\O_v}\)$ and $\(\calD_{\alpha,\fin}\)_{\O_v}$ is regular. Take any strict Darmon $\O_v$-point $\calP\colon \Spec \O_v\to \calX$ and set $n_\alpha := n_v(\calD_\alpha,\calP) = n_v(\calD_{\alpha,\fin},\calP)$. By \cite[Thm.~3.7.5]{CTS2}, the pullback $\calP^*\calA\in \Br K_v$ has residue over the closed point given by the image of $n_\alpha \partial_{\calD_{\alpha,\fin}}(\sA)$ under the map
\[
\calP^*: H^1_{\text{\'et}}(\calD_{\alpha,\fin},\QQ/\ZZ) \rightarrow H^1_{\text{\'et}}(\FF_v,\QQ/\ZZ).
\]
But $m_\alpha \mid n_\alpha$, thus $n_\alpha \partial_{\calD_{\alpha,\fin}}(\sA) = 0$ since $\calA\in \Br (X,D)$. Then $\calP^*\sA \in \Br \O_v = 0$.
\end{proof}

The non-constant behaviour of the local invariant map for elements of $\Br U$ at infinitely many places, as shown in Theorem \ref{thm:HarariA}, raises the question of whether the Brauer--Manin set is closed with respect to the different topologies. Our next result answers this question.

\begin{proposition} 
  \label{prop:B-Mset}
  Let $T \subset \Omega_K$ be a finite set of places of $K$. The following hold.
  \begin{enumerate}[label=\emph{(\roman*)}]
    \item \label{BMi}
    The sets $(\sX, \sD)_{\str}^*(\AA_{K, S})^{\Br}$ and $(\sX, \sD)_{\str}^*(\AA_{K, S}^T)^{\Br}$ are closed in the adelic topology. Additionally, $(\sX, \sD)_{\tr}^*(\AA_{K, S})^{\Br}$ is closed in the adelic topology and if $D_{\alpha,\fin}$ is regular for all $\epsilon_{\alpha} \neq 0,1$, then so is $(\sX, \sD)_{\tr}^*(\AA_{K, S}^T)^{\Br}$.
    
    \item \label{BMii} Let $\overline{(\sX, \sD)_{\str}^*(\AA_{K, S}^T)^{\Br}}$ denote the closure of $(\sX, \sD)_{\str}^*(\AA_{K, S}^T)^{\Br}$ in the product topology. Assume that $\Br U / \Br K$ is finite. Then
    \[
      \begin{split}
        \overline{(\sX, \sD)_{\str}^{\cc}(\AA_{K, S}^T)^{\Br}} &= (\sX, \sD)_{\str}^{\cc}(\AA_{K, S}^T)^{\Br V}, \\
        \overline{(\sX, \sD)_{\str}^{\dd}(\AA_{K, S}^T)^{\Br}} & \subseteq \overline{(\sX, \sD)_{\str}^{\dd}(\AA_{K, S}^T)^{\Br (X,D)}}.
      \end{split}
    \]
    If, moreover, $D_{\alpha,\fin}$ is regular for all $\epsilon_{\alpha} \neq 0,1$, then $(\sX, \sD)_{\str}^{\dd}(\AA_{K, S}^T)^{\Br (X,D)}$ is closed in the product topology and
    \[
      \overline{(\sX, \sD)_{\str}^{\dd}(\AA_{K, S}^T)^{\Br}} = (\sX, \sD)_{\str}^{\dd}(\AA_{K, S}^T)^{\Br (X,D)}.
    \]

    \item \label{B-Msetiii} Let $\overline{(\sX, \sD)_{\tr}^*(\AA_{K, S}^T)^{\Br}}$ denote the closure of $(\sX, \sD)_{\tr}^*(\AA_{K, S}^T)^{\Br}$ in the product topology. For $* \in \{\cc, \dd\}$ we have
    \[
      \overline{(\sX, \sD)_{\tr}^*(\AA_{K, S})^{\Br}} = (\sX, \sD)_{\tr}^*(\AA_{K, S})^{\Br}.
    \] 
    If $D_{\alpha,\fin}$ is regular for all $\epsilon_{\alpha} \neq 0,1$, we also have
    \[
      \overline{(\sX, \sD)_{\tr}^*(\AA_{K, S}^T)^{\Br}} = (\sX, \sD)_{\tr}^*(\AA_{K, S}^T)^{\Br}.
    \]
  \end{enumerate}
\end{proposition}

\begin{proof}

  If $T = \emptyset$ the statement of (i) follows from the fact that $U(\AA_K)^{\Br U}$ is closed in $U(\AA_K)$ since the evaluation map is continuous \cite[Prop.~10.5.2]{CTS2} and $(\sX, \sD)_{\str}^*(\AA_{K, S})^{\Br} = U(\AA_K)^{\Br U} \cap (\sX, \sD)_{\str}^*(\AA_K)$. The same argument applies to $(\sX, \sD)_{\tr}^*(\AA_{K, S})^{\Br}$ since by definition it is $(\sX,\sD)_{\tr}^{*}(\A_{K, S})\bigcap \(\bigcup_{\epsilon_\alpha\neq 0, 1} D_{\alpha, \fin} (\A_{K})^{\Br D_{\alpha, \fin}}\)$ and the union is finite as there are only finitely many such $D_{\alpha, \fin}$ in the support of $D$.

  If $T \neq \emptyset$ is an arbitrary finite subset of $\Omega_K$, for any $0 \le k_{\sA} \le \ord(\sA) - 1$, where $\ord(\sA)$ is the order of $\sA \in \Br U$, which is finite since $U$ is smooth, define $W_{k_{\sA}, T}$ by
    \[
      W_{k_{\sA}, T} = \left\{(P_v)_{v \notin T} \in (\sX, \sD)_{\str}^*(\AA_{K, S}^T) \ : \ \sum_{v \notin T} \inv_v \sA(P_v) = -\frac{k_{\sA}}{\ord(\sA)} \right\}.
    \]
  Each $W_{k_{\sA}, T}$ is a closed subset of $(\sX, \sD)_{\str}^*(\AA_{K, S}^T)$ as
  \[
    W_{k_{\sA}, T} 
    = \left\{(P_v)_{v \notin T} \in U(\AA_K^T) \ : \ \sum_{v \notin T} \inv_v \sA(P_v) = -\frac{k_{\sA}}{\ord(\sA)} \right\} \cap (\sX, \sD)_{\str}^*(\AA_{K, S}^T),
  \]
  and the first set is closed in $U(\AA_K^T)$ by continuity of the evaluation map \cite[Prop.~10.5.2]{CTS2}. It remains to notice that
  \[
    (\sX, \sD)_{\str}^*(\AA_{K, S}^T)^{\sA}
    = \bigcup_{\substack{0 \le k_{\sA} \le \ord(\sA) - 1 \\ \exists (P_v') \in (\sX, \sD)_{\str}^*(\AA_{K, S}) \text{ s.t. } \sum_{v \in T} \inv \sA(P_v') = k_{\sA}/\ord(\sA)}} W_{k_{\sA}, T}.
  \]
  Hence $(\sX, \sD)_{\str}^*(\AA_{K, S}^T)^{\sA}$ is closed, as it is a finite union of closed sets. It is clear, that $(\sX, \sD)_{\str}^*(\AA_{K, S}^T)^{\Br} = \cap_{\sA \in \Br U} (\sX, \sD)_{\str}^*(\AA_{K, S}^T)^{\sA}$. This confirms that $(\sX, \sD)_{\str}^*(\AA_{K, S}^T)^{\Br}$ is closed. 

  The same argument applies to $(\sX, \sD)_{\tr}^*(\AA_{K, S}^T)^{\Br}$ in view of 
  \[
    (\sX, \sD)_{\tr}^*(\AA_{K, S}^T)^{\Br} 
    = \bigcup_{\epsilon_\alpha\neq 0, 1} \( D_{\alpha, \fin} (\A_{K}^T)^{\Br D_{\alpha, \fin}} \cap (\sX,\sD)_{\tr}^{*}(\A_{K, S}^T)\)
  \]
  and the fact that the above union is finite. Only mild modifications are needed to take into account that we are dealing with non-strict points. In particular, 
  \[
    D_{\alpha, \fin} (\A_{K}^T)^{\Br D_{\alpha, \fin}} 
    = \bigcap_{\sA \in \Br D_{\alpha, \fin}} D_{\alpha, \fin} (\A_{K}^T)^{\sA}
  \] 
  and since all $\Br D_{\alpha, \fin}$ are torsion, each $D_{\alpha, \fin} (\A_{K}^T)^{\sA}$ is a finite union
  \[
    D_{\alpha, \fin} (\A_{K}^T)^{\sA} 
    = \bigcup_{\substack{0 \le k_{\sA} \le \ord(\sA) - 1 \\ \exists (P_v') \in D_{\alpha, \fin} (\A_K) \text{ s.t. } \sum_{v \in T} \inv \sA(P_v') = k_{\sA}/\ord(\sA)}} W_{k_{\sA}, T}', 
  \] 
  where
  \[
    W_{k_{\sA}, T}' 
    = \left\{(P_v)_{v \notin T} \in D_{\alpha, \fin} (\A_{K}^T) \ : \ \sum_{v \notin T} \inv_v \sA(P_v) = -\frac{k_{\sA}}{\ord(\sA)} \right\}.
  \]

  We now prove (ii) for Campana points. The definition of the Brauer--Manin pairing implies that $(\sX, \sD)_{\str}^*(\AA_{K, S}^T)^{\Br} \subset (\sX, \sD)_{\str}^{*}(\AA_{K, S}^T)^{\Br V}$ on taking projections away from $T$. Moreover, a similar argument to the one featuring in (i) shows that $(\sX, \sD)_{\str}^{*}(\AA_{K, S}^T)^{\Br V}$ is closed in the product topology as each semi-integral point is an integral point on $V$.
  
  Let $(P_v)\in (\sX, \sD)_{\str}^{\cc}(\AA_{K, S})^{\Br V}$. We show that any open set containing $(P_v)$ meets $(\sX, \sD)^{\cc}_{\str}(\AA_{K, S})^{\Br}$. Let $W$ be such an open set. By definition of the product topology, 
  \[ 
    W=\prod_{v\in S'} W_v \times \sideset{}{'}\prod_{v\notin S'}((\calX,\calD)_{\str}^*(\O_v),\sU(\O_v)) 
  \]
  where $S' \subset \Omega_K$ is a finite set containing $S$ and $W_v$ is an open set inside $U(K_v)$ for $v \in S$ or inside $(\calX,\calD)_{\str}^*(\O_v)$ for $v \in S'\setminus S$. Since $\Br V / \Br K$ is finite, $(\sX, \sD)_{\str}^{\cc}(\AA_{K, S})^{\Br V} = (\sX, \sD)_{\str}^{\cc}(\AA_{K, S}) \cap V(\A_{K})^{\Br V}$ is open, so we may assume that $W$ is small enough so that every adelic point inside it is orthogonal to $\Br V$. Our claim then follows directly from Theorem~\ref{thm:HarariB}, choosing $B\subset \Br U$ to be the subgroup generated by a finite set of generators for $\Br U/\Br V$, and taking projections away from $T$.
  
  The inclusion $(\sX, \sD)_{\str}^{\dd}(\AA_{K, S}^T)^{\Br} \subseteq (\sX, \sD)_{\str}^{\dd}(\AA_{K, S}^T)^{\Br (X,D)}$ follows as in the Campana case, thus the same inclusion applies to their closures in the product topology. Under the assumption that $D_{\alpha,\fin}$ is regular for all $\epsilon_{\alpha} \neq 0,1$, a similar argument for Darmon points using the Darmon counterpart of Theorem \ref{thm:HarariB} shows that the closure contains $(\calX,\calD)^{\dd}_{\str}(\A_{K, S}^T)^{\Br(X,D)}$. It follows from $(\calX,\calD)^{\dd}_{\str}(\A_{K, S}^T)^{\Br(X,D)} = \cap_{\sA \in \Br(X,D)} (\sX, \sD)_{\str}^*(\AA_{K, S}^T)^{\sA}$ and Lemma~\ref{lema:trivpairing} that this set is closed in the product topology. 

  Part (iii) follows from (i) and the fact that the product and the adelic semi-integral topologies coincide at the non-strict adelic set by definition.
\end{proof}

Proposition \ref{prop:B-Mset} allows us to define semi-integral Brauer--Manin obstructions to weak and strong approximation. 

\begin{mydef}
  \label{def:BMOtoLtG}
  Let $(X,D)$ be a Campana orbifold over a number field $K$ with $\O_S$-model $(\calX,\calD)$. Let $T \subset \Omega_K$ be finite. In the notation of Proposition~\ref{prop:B-Mset}, let
  \[
    \overline{(\sX, \sD)^*(\AA_{K, S \cup T}^T)^{\Br}} =
    \overline{(\sX, \sD)_{\str}^*(\AA_{K, S \cup T}^T)^{\Br}} \coprod \overline{(\sX, \sD)_{\tr}^*(\AA_{K, S \cup T}^T)^{\Br}}.
  \] 
  \begin{enumerate}[label=(\roman*)]
    \item We say that there is a \emph{Brauer--Manin obstruction to weak approximation off $T$ for $*$-points} if 
    \[
      \overline{(\sX, \sD)^*(\AA_{K, S \cup T}^T)^{\Br}} \neq (\sX, \sD)^*(\AA_{K, S}^T).
    \]
    \item We say that there is a \emph{Brauer--Manin obstruction to strong approximation off $T$ for $*$-points} if 
    \[
      (\sX, \sD)^*(\AA_{K, S \cup T}^T)^{\Br} \neq (\sX, \sD)^*(\AA_{K, S}^T).
    \]
  \end{enumerate}
  We shall omit ``off $T$'' in each definition above if $T = \emptyset$. 

  Alternatively, if the above assumptions are not satisfied we say that there is no Brauer--Manin obstruction to weak or strong approximation for semi-integral points respectively.
\end{mydef}

Before establishing Theorem~\ref{thm:main0} we provide a simple observation, which is well-known in the settings of rational and integral points, but which explains the need of the assumption on $T$ in Theorem~\hyperref[thm:main0]{\ref*{thm:main0}\emph{\ref*{m0iii}}--\emph{\ref*{m0iv}}}. 

\begin{proposition}
  \label{prop:values-of-inv-at-T}
  Let $\sA \in \Br U$. Then the following hold.
  \begin{enumerate}[label=\emph{(\roman*)}]
    \item \label{Ti} If $T \subset \Omega_K$ is such that $\sA$ is prolific at $T$ with respect to $U$, then 
    \[  
      (\sX, \sD)_{\str}^*(\AA_{K, S \cup T}^T)^{\sA} = (\sX, \sD)_{\str}^*(\AA_{K, S}^T).
    \]

    \item \label{Tii} Assume additionally that $\sA \in \Br U \setminus \Br V$ if $* = \cc$ or $\sA \in \Br U \setminus \Br(X, D)$ if $* = \dd$. If $T \subset \Omega_K$ is such that $\sA$ is not prolific at $T$ with respect to $U$, then 
    \[
      (\sX, \sD)_{\str}^*(\AA_{K, S \cup T}^T)^{\sA} \subsetneq (\sX, \sD)_{\str}^*(\AA_{K, S}^T).
    \]
  \end{enumerate}
\end{proposition}

\begin{proof}
  We begin with (i). Assume that $(\sX, \sD)_{\str}^*(\AA_{K, S \cup T}^T)^{\sA} \subsetneq (\sX, \sD)_{\str}^*(\AA_{K, S}^T)$ and let $(P_v)_{v \notin T}$ be a $T$-adele in the complement of $(\sX, \sD)_{\str}^*(\AA_{K, S \cup T}^T)^{\sA}$.  Then $(P_v)_{v \notin T}$ extends to $(P_v) \in (\sX, \sD)_{\str}^*(\AA_{K, S})$ such that $\sum_{v \in \Omega_K}\inv_v \sA (P_v) \neq 0$. On the other hand, since the sum of local invariant maps of $\sA$ over all places of $T$ takes all possible values, define $(P_v') \in (\sX, \sD)_{\str}^*(\AA_{K, S})$ as follows: set $P_v' := P_v$ for all $v \notin T$ and choose $(P_v')_{v \in T}$ so that $\sum_{v \in T}\inv_v \sA(P_v') = - \sum_{v \in \Omega_K \setminus T} \inv_v \sA(P_v)$. Thus $(P_v') \in (\sX, \sD)_{\str}^*(\AA_{K, S})^{\sA}$, hence it projects down to a point $(P_v')_{v \notin T} \in (\sX, \sD)_{\str}^*(\AA_{K, S \cup T}^T)^{\sA}$. But this image is the same as the image of $(P_v)$, as we have only made modifications at places of $T$, a contradiction.

  We continue with (ii). Assume that $\sA \in (\Br U)[n]$ for some $n \in \mathbb{Z}_{>0}$ and let $T \subset \Omega_K$ be such that $\sum_{v \in T}\inv_v \sA (P_v) \in \{k_1/n, \dots, k_r/n \}$ for any given $(P_v)$ in $(\sX, \sD)_{\str}^*(\AA_{K, S})$, where the set $\{k_1, \dots, k_r\}$ is a strict subset of $\{ 0, 1, \dots, n-1\}$. To see that $(\sX, \sD)_{\str}^*(\AA_{K, S \cup T}^T)^{\sA} \subsetneq (\sX, \sD)_{\str}^*(\AA_{K, S}^T)$ holds, choose $(P_v)_{v \notin T} \in (\sX, \sD)_{\str}^*(\AA_{K, S}^T)$ so that $-\sum_{v \in \Omega_K \setminus T} \inv_v \sA(P_v)$ does not lie in $\{k_1/n, \dots, k_r/n\}$. The existence of such $(P_v)$ is verified by the same argument as the one used in the proof of \cite[Thm.~13.4.3]{CTS2} using Theorem~\ref{thm:HarariA}. It is then clear that any extension $(P_v)$ of $(P_v)_{v \notin T}$ satisfies $\sum_{v \in \Omega_K} \inv_v \sA(P_v) \neq 0$, thus $(P_v)_{v \notin T} \in (\sX, \sD)_{\str}^*(\AA_{K, S}^T) \setminus (\sX, \sD)_{\str}^*(\AA_{K, S \cup T}^T)^{\sA}$ which proves (ii).
\end{proof}

We are now in position to prove Theorem~\ref{thm:main0} from the introduction.

\begin{proof}[Proof of Theorem \ref{thm:main0}]
Parts (i) and (ii) follow directly from Proposition~\hyperref[prop:B-Mset]{\ref*{prop:B-Mset}\emph{\ref*{BMii}}}, while (iii) and (iv) are implied by Proposition~\hyperref[prop:B-Mset]{\ref*{prop:B-Mset}\emph{\ref*{BMi}}} and Proposition~\hyperref[prop:values-of-inv-at-T]{\ref*{prop:values-of-inv-at-T}\emph{\ref*{Tii}}}. 
\end{proof}

The following proposition gives the relationship for strong approximation and the sufficiency of Brauer--Manin obstruction with respect to the different notions of integral, semi-integral and rational points. 

\begin{proposition}
  \label{prop:BMOequiv}
  Let $(X,D)$ be a Campana orbifold over a number field $K$ and let $(\calX,\calD)$ be an $\calO_S$-model for some finite set $S \subset \Omega_K$ containing $\Omega_K^\infty$. Let $T \subset \Omega_K$ be finite. Then (under the convention in Definition~\ref{def:orbifoldBMOtoHP}) the following statements are equivalent.
  \begin{enumerate}[label=\emph{(\roman*)}]
    \item $\sU(\O_{S'\cup T})$ is dense in $\sU(\A_{\sO_{S' \cup T}}^T)^{\Br}$ (respectively, $\sU(\A_{\sO_{S'}}^T)$) for any finite set of places $S'$ containing $S$.

    \item $(\calX,\calD)_{\str}^{\dd}(\O_{S' \cup T})$ is dense in $(\sX, \sD)_{\str}^{\dd}(\AA_{K, S' \cup T}^T)^{\Br}$ (respectively, $(\sX, \sD)_{\str}^{\dd}(\AA_{K, S' \cup T}^T)$) for any finite set of places $S'$ containing $S$. 

    \item $(\calX,\calD)_{\str}^{\cc}(\O_{S' \cup T})$ is dense in $(\sX, \sD)_{\str}^{\cc}(\AA_{K, S' \cup T}^T)^{\Br}$ (respectively, $(\sX, \sD)_{\str}^{\cc}(\AA_{K, S' \cup T}^T)$) for any finite set of places $S'$ containing $S$.

    \item $U(K)$ is dense in $U(\A_K^T)^{\Br}$ (respectively, $U(\A_K^T)$).
  \end{enumerate}
\end{proposition}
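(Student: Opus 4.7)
The strategy is to establish a chain of implications (iv)$\Rightarrow$(iii)$\Rightarrow$(ii)$\Rightarrow$(i)$\Rightarrow$(iv) that exploits natural inclusions of the various point sets on $U = X\setminus D_{\red}$. For any non-archimedean place $v \notin S$, the Darmon and Campana conditions say that the intersection multiplicity with $\calD_\alpha$ is either $0$ or lies in a specific subset of $\ZZ_{>0}$; removing the points on $D_{\red}$ therefore exhibits $\sU(\sO_v)$ as an open subset of $(\sX,\sD)^{\dd}(\sO_v)\cap U(K_v)$, which is an open subset of $(\sX,\sD)^{\cc}(\sO_v)\cap U(K_v) \subseteq U(K_v)$. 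Combined with properness of $X$ at $v \in S$, this produces a chain of inclusions of adelic spaces
\[
\sU\bigl(\A_{\sO_{S'}}^T\bigr) \;\subseteq\; (\sX_{S'},\sD_{S'})_{\str}^{\dd}\bigl(\A_K^T\bigr) \;\subseteq\; (\sX_{S'},\sD_{S'})_{\str}^{\cc}\bigl(\A_K^T\bigr) \;\subseteq\; U\bigl(\A_K^T\bigr),
\]
compatible with the analogous inclusions of the diagonally embedded global point sets, and since Definition \ref{def:SIBMO} bases the Brauer--Manin pairing for strict semi-integral points on $\Br U$, the Brauer--Manin sets are cut out by the same subgroup at each level. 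Up to enlarging $S'$ we may assume $S' \supseteq S\cup T$, so $T$-exclusion causes no incompatibility.

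For (iv)$\Rightarrow$(iii), I would take $(P_v) \in (\sX_{S'},\sD_{S'})_{\str}^{\cc}(\A_K^T)$ (possibly restricted to the Brauer--Manin subset) and a basic adelic open neighbourhood $W = \prod_{v \in R \setminus T} W_v \times \prod_{v \notin R \cup T} \sU(\sO_v)$ with $R \supseteq S'$ finite and $W_v \subseteq (\sX,\sD)^{\cc}(\sO_v)\cap U(K_v)$ for $v \in R\setminus(S'\cup T)$. Since $W$ is simultaneously a basic open in $U(\A_K^T)$, hypothesis (iv) produces $x \in U(K) \cap W$ (and $x$ automatically pairs trivially with $\Br U$). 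The openness conditions on the $W_v$ force $x$ to satisfy the Campana condition at every $v \in R\setminus S'$, and $x \in \sU(\sO_v) \subseteq (\sX,\sD)^{\cc}(\sO_v)$ at the remaining places, so $x \in (\sX_{S'},\sD_{S'})_{\str}^{\cc}(\sO_{S'})$.

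The implications (iii)$\Rightarrow$(ii) and (ii)$\Rightarrow$(i) proceed identically by ``pushing'' an adelic point inside the smaller ambient space to the larger one: any basic open set $W$ in $\sU(\A_{\sO_{S'}}^T)$ is a basic open set in $(\sX_{S'},\sD_{S'})_{\str}^{\dd}(\A_K^T)$ (and likewise for Campana), because the cofinite factor $\sU(\sO_v)$ used in the restricted-product topologies coincides. Applying density of the larger global point set yields $x$ in it lying in $W$, and the explicit form of $W$ forces $x \in \sU(\sO_v)$ at every $v \notin S'$, i.e.\ $x \in \sU(\sO_{S'})$. Compatibility of Brauer--Manin sets is automatic since both are defined by pairing with $\Br U$.

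Finally, for (i)$\Rightarrow$(iv), any $(P_v) \in U(\A_K^T)^{\Br U}$ satisfies $P_v \in \sU(\sO_v)$ at all but finitely many $v$ by the restricted-product condition; enlarging $S'$ to absorb the exceptional set, we see $(P_v)$ already lies in $\sU(\A_{\sO_{S'}}^T)^{\Br}$, and any basic open neighbourhood in $U(\A_K^T)$ cuts out a basic open neighbourhood in the smaller space, so hypothesis (i) delivers $x \in \sU(\sO_{S'}) \subseteq U(K)$ inside the desired neighbourhood. The main obstacle is bookkeeping: at each step we must verify that a basic open set in one restricted-product topology really is a basic open set in the others (this is straightforward once one checks that $\sU(\sO_v)$ is the common compact-open piece at almost all places), and that the Brauer--Manin pairings all restrict compatibly. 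The reduction $S' \supseteq S \cup T$ removes the only other potential subtlety, namely the behaviour at places of $T$ lying outside $S'$.
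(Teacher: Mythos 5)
Your proof is correct and rests on the same ingredients as the paper's: the nested inclusions of global and adelic point sets coming from $\sU(\sO_v) \subseteq (\sX,\sD)^{\dd}(\sO_v) \cap U(K_v) \subseteq (\sX,\sD)^{\cc}(\sO_v)\cap U(K_v) \subseteq U(K_v)$, the $v$-adic openness of the integral/Darmon/Campana conditions, and the enlargement of $S'$ to absorb the finitely many places where a given adelic point fails to lie in $\sU(\sO_v)$. The paper writes out only (i)$\Rightarrow$(ii) and remarks that the other implications ``follow the same pattern''; your cyclic chain (iv)$\Rightarrow$(iii)$\Rightarrow$(ii)$\Rightarrow$(i)$\Rightarrow$(iv) is one valid way of making that remark precise.
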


\begin{proof}
We first prove that (iv)$\implies$(iii)$\implies$(ii)$\implies$(i). We have the containments
\begin{equation}
  \label{eqn:chain}
  \sU(\A_{\sO_{S' \cup T}}^T)^{\Br}\subseteq (\sX, \sD)_{\str}^{\dd}(\AA_{K, S' \cup T}^T)^{\Br}\subseteq (\sX, \sD)_{\str}^{\cc}(\AA_{K, S' \cup T}^T)^{\Br}\subseteq U(\A_K^T)^{\Br}
\end{equation}
of open subsets, and similarly without the $\Br$ superscript. Intersecting each set with $U(K)$ produces the following chain: 
\[
  \sU(\sO_{S' \cup T}) \subseteq (\sX, \sD)_{\str}^{\dd}(\O_{S' \cup T}) \subseteq (\sX, \sD)_{\str}^{\cc}(\O_{S' \cup T})\subseteq U(K).
\]
If $U(K)$ has dense intersection with one of the adelic sets in \eqref{eqn:chain}, then it has dense intersection with the smaller adelic sets also. Noting that enlarging $S$ to $S'$ enlarges the set of (semi-)integral points, we obtain (iv)$\implies$(iii)$\implies$(ii)$\implies$(i).

We now show that (i)$\implies$(iv) and conclude the proof. Let $(M_v)_{v\notin T} \in U(\A^T_K)^{\Br}$. Then $(M_v)_{v\notin T}$ extends to some $(M_v) \in U(\A_K)^{\Br}$. Let $S'\supset S$ be a finite set of places outside of $T$ for which we want to find a point in $U(K)$ approximating $M_v$ for $v\in S'$. Enlarging $S'$ if necessary, we assume that $M_v\in \sU(\O_v)$ for every $v\notin S' \cup T$. Thus $(M_v)_{v\notin T} \in \sU(\AA_{\sO_{S' \cup T}}^T)^{\Br}$. Using (i), we obtain a point $M \in \sU(\O_{S' \cup T})$ that approximates $(M_v)_{v\in S'}$ arbitrarily well. The fact that $M\in U(K)$ follows from the inclusion $\sU(\O_{S' \cup T}) \subset U(K)$.
\end{proof}

\section{Semi-integral points under maps} \label{section:maps}

In this section we consider the behaviour of semi-integral points under maps. Note that intersection multiplicity and pullback of effective Cartier divisors are both defined via fibre products, and so one may hope that they are compatible. The following proposition shows that this is the case over all but finitely many places.

\begin{proposition} \label{prop:compat}
Let $\phi: Y \rightarrow X$ be a dominant morphism of smooth proper varieties over a number field $K$, and let $D_\alpha \subset X$ be an effective Cartier divisor on $X$. Let $v \not\in \Omega_K^\infty$ be a place of $K$ such that there exist $\O_v$-models $\calX$ and $\calY$ of $X$ and $Y$ respectively. Suppose that the following conditions hold:
\begin{enumerate}[label=\emph{(\alph*)}]
\item $\phi$ spreads out to a dominant morphism $\Phi: \calY \rightarrow \calX$.
\item The closure $\calD_\alpha$ of $D_\alpha$ in $\calX$ is an effective Cartier divisor.
\end{enumerate}
Let $y\in Y(K_v)$. Write $\Phi^*\calD_\alpha=\sum t_\beta \calF_\beta$ where $t_\beta>0$ and $\calF_\beta \subset \calY$ are irreducible divisors. 
\begin{enumerate}[label=\emph{(\roman*)}]
\item Intersection multiplicity is additive on components: for $\calD_{\beta_v}, \beta_v \mid \alpha$ the reduced $\O_v$-components of $\calD_\alpha$ and $e_{\beta_v}$ the multiplicity of  $\calD_{\beta_v}$, we have, for all $x \in X(K_v)$,
\[
n_v\left(\calD_\alpha,x\right) = \sum_{\beta_v \mid \alpha}e_{\beta_v} n_v\left(\calD_{\beta_v},x\right). \label{prop:compati}
\]
\item Intersection multiplicity commutes with pullback: we have the equality
\[
n_v\left(\Phi^*\calD_\alpha,y\right) = n_v\left(\calD_\alpha,\phi\left(y\right)\right).
\]
In particular, we have $n_v(\calD_\alpha,\phi(y))\geq t_\beta n_v(\calF_\beta,y)$ for all $\beta$.
\label{prop:compatii}
\end{enumerate}
\end{proposition}

Note that (b) holds when the special fibre of $\calX$ is locally factorial, e.g.\ regular.

\begin{proof}

Let $x \in X(K_v)$. Since intersection multiplicity can be computed locally, we may assume that $\overline{y}\in \calY=\Spec B$ and $\overline{x}\in \calX=\Spec A$ such that $\calD_\alpha = \Spec A/(t)$. Let $f_x: A \rightarrow \O_v$ be the ring morphism corresponding to $\overline{x}$. Then $\overline{x}^*\calD_\alpha = \Spec \O_v/ (f_x(t))$.

Now we prove part (i). Shrinking $\Spec A$ further if necessary, we assume that for each $\beta_v$, there exists $t_{\beta_v} \in A$ such that $\calD_{\beta_v}=\Spec A/\left(t_{\beta_v}\right)$. Then we have $t = \prod_{\beta_v \mid \alpha} t_{\beta_v}^{e_{\beta_v}}$, hence we have $f_x(t) = \prod_{{\beta_v} \mid \alpha} (f_x(t_{\beta_v}))^{e_{\beta_v}}$, from which the result follows.

Next, we prove part (ii). Set $x := \phi(y)$. Integrality of $\calX$ and $\calY$ together and dominance of $\Phi$ collectively imply that the pullback of $\calD_\alpha$ is Cartier (see \cite[Tag~01WQ]{SP}). Let $\varphi\colon A\to B$ be the map on rings corresponding to $\Phi$. Define $f_y: B \rightarrow \O_v$ analogously to $f_x$. Then $\Phi^*\calD_\alpha=\Spec B/(\varphi(t))$, and

\[
n_v(\calD_\alpha,x)=v(f_x(t))=v(f_y(\varphi(t))) = n_v\left(\Phi^*\calD_\alpha,y\right).
\]

Indeed, we have the following picture (where $\overline{y} \in \calY\left(\O_v\right)$ denotes the closure of $y$):

\begin{equation}\label{cd:cubes}
\begin{tikzcd}[column sep=0.2cm,row sep=0.2cm]
& \Spec \O_v/\left(f_y\left(\varphi\left(t\right)\right)\right) \ar[rr] \ar[ld] \ar[dd] && \Spec B/\left(\varphi\left(t\right)\right) \ar[dd] \ar[ld]\\
\Spec \O_v/\left(f_x\left(t\right)\right) \ar[rr, crossing over] \ar[dd] && \Spec A/\left(t\right) \\
& \Spec \O_v \ar[rr, "\hspace{-1em} \overline{y}"] \ar[ld] && \Spec B. \ar[ld, "\Phi"] \\
\Spec \O_v \ar[rr,swap,"\overline{x}"] && \Spec A \ar[uu,  crossing over, leftarrow]
\end{tikzcd}
\end{equation}
Applying the equivalence of the categories of affine schemes and of rings to the bottom square, we have $f_x = f_y \circ \varphi$. The final inequality follows from part (i).
\end{proof}

As noted in \cite[\S3.6]{PSTVA}, the quantitative behaviour of semi-integral points is sensitive to birational modifications. This is perhaps unsurprising, since spreading out a map of varieties to a map of models may alter its properties, and one must also consider the effect of the map on divisors via pushforward and pullback. Here, we develop conditions on maps precisely to ensure that they behave well on semi-integral points geometrically.

First, we give Campana's definition of a morphism of orbifolds over a field $k$ (see \cite[Defs.~3.4,~3.7]{CAM11a}). Let $(X,D)$ and $(Y,F)$ be Campana orbifolds over a field $k$. Write $D = \sum_{\alpha \in \scrA} \(1-1/m_{\alpha}\) D_{\alpha}$ and $F = \sum_{\beta \in \scrB} \(1-1/n_{\beta}\) F_{\beta}$.

\begin{mydef} \label{def:OM}
An \emph{orbifold morphism} $f: (Y,F) \rightarrow (X,D)$ is a morphism $f: Y \rightarrow X$ such that
\begin{enumerate}[label=(\roman*)]
\item $f(Y) \not\subset D_{\textrm{red}}$, and 
\item If $f(F_{\beta}) \subset D_\alpha$, then $n_{\beta} \geq m_{\alpha}/t_{\alpha,\beta}$ for $t_{\alpha,\beta}$ the multiplicity of $F_{\beta}$ in $f^*D_\alpha$.
\end{enumerate}
We say that $f$ is \emph{divisible} if, in (ii), we may replace $n_{\beta} \geq m_{\alpha}/t_{\alpha,\beta}$ by $m_{\alpha} \mid t_{\alpha,\beta}n_{\beta}$. 
\end{mydef}

We make a further definition for birational morphisms. As is well-known (see \cite[Thm.~8.1.24]{LIU}), any birational morphism $f: Y \rightarrow X$ of projective varieties can be identified as the blowup of $X$ in some closed subscheme $Z$, which we call the \emph{centre} of $f$.

\begin{mydef}
An \emph{elementary birational orbifold morphism (EBOM)} is an orbifold morphism $f: (Y,F) \rightarrow (X,D)$ such that
\begin{enumerate}[label=(\roman*)]
\item $f: Y \rightarrow X$ is a birational morphism, and
\item $f_*(F) = D$.
\end{enumerate}
Here, $f_*(F)$ denotes the $\mathbb{Q}$-linear extension of proper pushforward as in \cite[Tag~02R3]{SP}. Since the continuous image of an irreducible space is irreducible, we may write
\[
F = \sum_{\alpha \in \scrA}\left(1-\frac{1}{m_\alpha}\right)\widetilde{D_\alpha} + \sum_{i=1}^r\left(1-\frac{1}{n_i}\right)E_i,
\]
where $\widetilde{D}_\alpha$ is the strict transform of $D_\alpha$, the $E_i$ are exceptional divisors and the weights $n_i$ satisfy $n_i\geq \max_{f\left(E_i\right) \subset D_\alpha}m_\alpha/t_{\alpha,i}$ for $t_{\alpha,i}$ the multiplicity of $E_i$ in $f^*D_\alpha$ when $f\left(E_i\right) \subset D_\alpha$.

In particular, when the centre of $f$ (viewed as the blowup of a closed subscheme of $X$) is disjoint with $D_{\red}$, the restriction on the $n_i$ is simply $n_i \geq 1$.
\end{mydef}

In particular, given a birational morphism to an orbifold, there exists a ``minimal'' orbifold structure on the source making the morphism an EBOM, defined as follows.

\begin{mydef}\label{def:induced}
Let $(X,D)$ be a Campana orbifold. Given a birational morphism $\varphi\colon\widetilde{X}\to X$, the {\it induced orbifold structure from $\varphi$} on $\widetilde{X}$ is the orbifold $(\widetilde{X},\widetilde{D})$ defined as follows: write $D = \sum_{\alpha \in \scrA} \(1-1/m_\alpha\)D_{\alpha}$, and denote by $E_1,\dots,E_r$ the irreducible components of the exceptional divisor of $\varphi$. For each $\alpha \in \scrA$, write $\varphi^*D_\alpha = \widetilde{D_\alpha} + \sum_{i=1}^r e_{\alpha,i} E_i$ with $\widetilde{D}_{\alpha}$ the strict transform of $D_{\alpha}$, and define $n_i := \max \( \{\lceil m_\alpha/e_{\alpha,i} \rceil \mid e_{\alpha,i}>0\} \cup \{1\} \)$. Finally, define $\widetilde{D} := \sum_{\alpha \in \scrA} \(1-1/m_\alpha\) \widetilde{D_\alpha} + \sum_{i=1}^r\(1-1/n_i\)E_i$.
\end{mydef}

We give two examples of orbifold isomorphisms which illustrate that semi-integrality of points is not always preserved by such maps.

\begin{example} \label{example:P1}
Consider the orbifold automorphism $f$ on $\(\mathbb{P}^1,\(1-1/2\)[0:1]\)$ given by $f([x:y]) = [x:2y]$. Using the obvious model $(\mathbb{P}^1_\Z,\(1-1/2\)[0:1])$, we see that $f$ fails to preserve $2$-adic Campana points: indeed, $P \in \mathbb{P}^1(\Q_2)$ is a local Campana point if and only if, choosing coprime $2$-adic integer coordinates $P = [x:y]$, we have $2^2 \mid x$. Then $[4:1]$ is a $2$-adic Campana point, but $f([4:1]) = [2:1]$ is not. Note that $f$ does not spread out over the prime $2$. To see this, note that the points $[0:1], [2:1]$ have the same reduction modulo $2$, while their images ($[0:1]$ and $[1:1]$ respectively) do not.
\end{example} 

\begin{example}
Consider the orbifold $(C,D)$, where
\[
C: x^2 + y^2 = 9z^2 \subset \mathbb{P}^2, \quad D = \left(1-\frac{1}{3}\right)Z\left(x-y\right).
\]
Taking the obvious model $(\calC,\calD)$ in $\mathbb{P}^2_{\Z}$, we have $(\calC,\calD)^{\cc}(\Z_3) = \emptyset$. Indeed, suppose we have $P \in C(\mathbb{Q}_3)$ with coprime $\mathbb{Z}_3$-coordinates $[x:y:z]$. Note that $v_3(x-y) \geq 1$. Geometrically, the special fibre of $\calC$ is a singular non-split conic with unique $\mathbb{F}_3$-point $[0:0:1]$, so every $\mathbb{Q}_3$-point must reduce to this point. On the other hand, note that if $v_3(x-y) \geq 3$, then $y = x + 27x'$ for some $x' \in \mathbb{Z}_3$, hence $2x^2 \equiv 9z^2 \pmod{27}$; since $3 \nmid z$ and $3 \mid x$ (as seen by considering the defining equation modulo $3$), this implies that $2$ is a quadratic residue modulo $3$, which is false, hence we deduce that $v_3(x-y) \leq 2$.

On the other hand, $C$ is isomorphic to $\mathbb{P}^1$ via the isomorphism $[x:y:z] \mapsto [3z-x:y]$ for $[x:y:z] \neq [3:0:1]$, $[3:0:1] \mapsto [0:1]$ (this is the projection from $[3:0:1] \in C(\Q)$ onto the line $z=0 \subset \mathbb{P}^2$, which we identify with $\mathbb{P}^1$). This takes $Z(x-y)$ to the divisor cut out by the equation $(x+y)^2 = 2y^2$ on $\mathbb{P}^1$. Further composing by the automorphism $[x:y] \mapsto [x+y:y]$, we obtain an ``orbifold isomorphism''
\[
\begin{aligned}
g: \(C,\left(1-\frac{1}{3}\right)Z\left(x-y\right)\) & \rightarrow \(\mathbb{P}^1,\left(1-\frac{1}{3}\right)Z\left(x^2 - 2y^2\right)\), \\
[x:y:z] & \mapsto
\begin{cases}
[3z-x+y:y] & \textrm{ if } [x:y:z] \neq [3:0:1], \\
[1:1] & \textrm{ if } [x:y:z] = [3:0:1].
\end{cases}
\end{aligned}
\]
In this context, we think of the orbifold divisor as being associated to the norm form for the obvious basis of $\mathbb{Q}(\sqrt{2})$, and we have $3$-adic local integral points: they correspond to coprime integers $x$ and $y$ such that $x^2 -2y^2$ is not divisible by $3$, e.g.\ $x = 1$, $y=0$.

In particular, while $g^{-1}$ is an orbifold isomorphism, its source has many $\mathbb{Z}$-Campana points, while its target has none. As with the previous example, the chosen isomorphism $g$ does not spread out over the prime $3$: note that $[3:0:1],[0:3:1] \in C(\Q)$ have the same $3$-adic reduction, but $g([3:0:1]) = [1:1]$ and $g([0:3:1]) = [2:1]$ do not.
\end{example}

However, throwing away finitely many places, EBOMs preserve Campana points.

\begin{proposition}\label{prop:ebom}
Let $f\colon (Y,F)\to (X,D)$ be an EBOM over a number field $K$. Let $(\calY,\calF)$ and $(\calX,\calD)$ be $\O_S$-models for $(Y,F)$ and $(X,D)$ respectively for some finite $S \subset \Omega_K$ containing $\Omega_K^\infty$. Let $Z \subset X$ and $E \subset Y$ denote the centre and exceptional divisor of $f$ respectively. Then there is a finite set of places $T \supset S$ such that the following are true.
\begin{enumerate}[label=\emph{(\roman*)}]
\item \label{ebomi} Let $\calZ$ be the closure of $Z$ in $\calX$. The points in $\left(\calX,\calD\right)^*\left(\O_T\right)$ (resp.\ $\(\calX \setminus \calD_{\red}\)\(\O_T\)$) whose closures do not meet $\calZ_T$ are in $f((\calY,\calF)^*(\calO_T))$ (resp.\ $f\(\(\calY \setminus \calF_{\red}\)\(\O_T\)\)$).
\item \label{ebomii} If $Z \cap D_{\red} = \emptyset$, then $f(\calY,\calF)^{*}(\calO_T)\subset (\calX,\calD)^{*}(\calO_T)$ and $f(\calY\setminus \calF_{\red})(\calO_T)\subset (\calX\setminus \calD_{\red})(\calO_T)$.
\item \label{ebomiii} Assume that, for each irreducible component $E_j$ of $E$ and each component $D_\alpha$ of $D$, we have $f(E_j) \cap D_\alpha = \emptyset$ or $f(E_j) \subset D_\alpha$. Then $f(\calY,\calF)^{\cc}(\calO_T)\subset (\calX,\calD)^{\cc}(\calO_T)$ and $f(\calY\setminus\calF_{\red})(\calO_T)\subset (\calX\setminus\calD_{\red})(\calO_T)$.
\end{enumerate}
\end{proposition}

\begin{proof}
Choosing $T \supset S$ sufficiently large, it follows from spreading out (see \cite[\S3.2]{POO}) that $\calY_T \cong (\Bl_\calZ \calX)_T$. Note that $f$ spreads out canonically to a birational morphism $\overline{f}: \calY_T \rightarrow \calX_T$ with centre $\calZ$. Letting $\calE$ be the closure of $E$ in $Y$, we see that $\overline{f}$ restricts to an isomorphism $\calU\colonequals (\calY \setminus \calE)_T \xrightarrow{\sim} \calV\colonequals(\calX \setminus \calZ)_T$.

First we prove (i). Let $x \in (\calX,\calD)^*(\O_T)$ or $\(\calX\setminus\calD_{\red}\)^*\(\O_T\)$ be such that $\overline{x}_T$ lies in $\calV$. Letting $y \in Y(K)$ be the unique preimage of $x$, we have $\overline{y}_T \in \calU$. For any component $F_\beta$ of $F$, we have either that $F_\beta \subset E$ (i.e.\ $F_\beta$ is contracted by $f$) or else that there exists a unique component $D_\alpha$ of $D$ such that $f(F_\beta) = D_{\alpha}$. In the former case, we have $\calF_\beta \subset \calE$, hence $n_v(\calF_\beta,y) = 0$ for $v \not\in T$. In the latter case, we see that $\overline{f}$ restricts to an isomorphism $\calF_\beta \setminus (\calF_\beta \cap \calE) \xrightarrow{\sim} \calD_\alpha \setminus (\calD_\alpha \cap \calZ)$, from which we deduce that $n_v(\calF_\beta,y) = n_v(\calD_\alpha,x)$ for $v \not\in T$. Since the weights $m_\alpha$ and $n_\beta$ are equal, we deduce (i).

We now turn to (ii) and (iii). Note that the result in (ii) for integral points follows immediately from the isomorphism $\calU \cong \calV$ and the inclusion $\left(\calD_{\red}\right)_T \subset \calV$ upon enlarging $T$. Let $y \in (\calY,\calF)^{*}(\O_T)$, and set $x := f(y)$. Let $v \in \Omega_K \setminus T$. Our goal is to prove, under the hypotheses specified in each part, that $x \in (\calX,\calD)^{*}(\O_v)$.

Let $D_\alpha$ be a component of $D$ with weight $m_{\alpha}$. We must show that $\overline{x}$ intersects $\calD_\alpha$ with the correct multiplicity. If $n_v(\calD_\alpha,x)=0$, then there is nothing to prove, so assume for the rest of the proof that $n_v(\calD_\alpha,x)>0$. We have either $\overline{x}_v\in \calU_v(\F_v)$ or $\overline{x}_v\in \calZ_v(\F_v)$.

Assume that $\overline{x}_v\in \calU_v(\F_v)$. There is a unique component $F_{\beta}$ of $F$ which is the strict transform of $D_\alpha$ with weight $n_\beta = m_\alpha$. Moreover, $n_v(\calD_\alpha,x) = n_v(\calF_\beta,y)$ by Proposition \hyperref[prop:compat]{\ref*{prop:compat}\emph{\ref*{prop:compatii}}}. In particular, if $y$ satisfies the local Campana, Darmon or integrality condition at $v$ with respect to $\calF_\beta$, then the same is true of $x$ with respect to $\calD_\alpha$ (which amounts to a contradiction to $n_v(\calD_\alpha,x)>0$ when $m_\alpha = \infty$). To complete the proof of (ii) and (iii), we may therefore assume that $\overline{x}_v\in \calZ_v(\F_v)$.

We first make some observations regarding exceptional and orbifold divisors. Let $Z\cap D_\alpha$ be the scheme-theoretic intersection, and denote by $\overline{Z\cap D_\alpha}$ its closure in $\calX$. There exist finitely many $v$ with $\left(\overline{Z\cap D_\alpha}\right)_v\neq (\calZ\cap\calD_\alpha)_v$. Let $T$ contain those places for each $\alpha\in \scrA$.

Denoting by $E_j, j=1,\dots,r$ the irreducible components of the exceptional divisor $E \subset Y$, note that the irreducible components of $\calE$ are the closures $\calE_j = \cl_{\calY}E_j$, $j=1,\dots,r$. In particular, setting $Z_j = f(E_j)$ and $\calZ_j = \cl_\calX Z_j$, we have $\calZ_T = \cup_{j=1}^r \left(\calZ_j\right)_T$. For $v \not\in T$, we deduce that $\calZ_v = \cup_{j=1}^r (\calZ_j)_v$. Note that there does not necessarily exist a bijection between the irreducible schemes $Z_j \subset Z$ and the irreducible components of $Z$.

Recall that we are under the assumption $\overline{x}_v\in \calZ_v(\F_v)$. By the previous paragraph, we deduce that $\overline{x}_v \in (\calZ_j)_v(\F_v)$ for some $j \in \{1,\dots,r\}$. Since $\overline{x}_v\in(\calD_\alpha)_v(\F_v)$ also, we deduce that $(\calZ_j \cap \calD_\alpha)_v \neq \emptyset$. Since $(\calZ_j \cap \calD_\alpha)_v = \cl_\calX(Z_j \cap D_\alpha)$ (in turn since $v \not\in T$), we deduce that $Z_j \cap D_\alpha = f\left(E_j\right) \cap D_\alpha \neq \emptyset$. This contradicts the hypothesis of (ii), and so we deduce (ii) from the observations in the case $\overline{x}_v\in \calU_v(\F_v)$ above.

To finish the proof of (iii), it follows from (ii) that we may assume that $f\left(E_j\right) \subset D_\alpha$. Since $f$ is an orbifold morphism, we deduce that $E_j = F_\beta$ for some component $F_\beta$ of $F$ and that the weight $1-1/n_\beta$ attached to $F_\beta$ satisfies $t_{\alpha,\beta}n_\beta \geq m_\alpha$, where $t_{\alpha,\beta}$ denotes the multiplicity of $E_j = F_\beta$ in the pullback of $D_\alpha$. In particular, $n_\beta = \infty$ if $m_\alpha = \infty$.

Since $y \in (\calY,\calF)^{\cc}(\O_T)$, we have $n_v(\calF_\beta,y) \in \Z_{\geq n_\beta} \cup \{0,\infty\}$ if $n_\beta$ is finite and zero otherwise. However, since $\overline{f(y)}_v \in (\calZ_j)_v$, it follows from compatibility of blowups and base change/reduction that $\overline{y}_v \in \calE_j = \calF_\beta$, i.e.\ $n_v(\calF_\beta,y) > 0$. We get a contradiction when $m_\alpha = \infty$ and otherwise get $n_v(\calF_\beta,y) \geq n_\beta \geq m_\alpha/t_{\alpha,\beta}$. It now suffices to note that $n_v(\calD_\alpha,x) \geq t_{\alpha,\beta}n_v(\calF_\beta,y)$ by Proposition \hyperref[prop:compat]{\ref*{prop:compat}\emph{\ref*{prop:compatii}}}.
\end{proof}

Using similar ideas, we recover the following result, which was used in \cite{NS} to study Campana points on del Pezzo surfaces.

\begin{lemma}
Given an orbifold $(X,D)$ with $\O_v$-model $(\calX,\calD)$ and $z\in X(K_v)$, set
\[
Y:=\Bl_{z} X \xrightarrow{\rho} X,\quad \calY:=\Bl_{\overline{z}} \calX \xrightarrow{\overline{\rho}} \calX, \quad F:=\rho^*D,\quad \calF:=\overline{\rho}^{*}\calD,
\]
and let $\calE$ be the exceptional divisor of $\overline{\rho}$.
Then $(\calY,\calF)$ is an $\O_v$-model for $(Y,F)$. Let $y\in Y(K)$ and $x=\rho(y)$. Assume  that $\overline{x}_v \neq \overline{z}_v$. Then $y$ is a Campana $\O_v$-point if and only if $x$ is a Campana $\O_v$-point.
\end{lemma}

\begin{proof}
That $\left(\calY,\calF\right)$ is an $\O_v$-model for $\left(Y,F\right)$ follows from \cite[Prop.~8.1.12]{LIU}, which gives $\calY_{\left(0\right)} \cong Y$ and integrality of $\calY$, and \cite[Prop.~4.3.9]{LIU}, which gives flatness of $\calY$ over $\Spec \O_v$.

Note that $x$ is a local Campana point not meeting $\overline{z}$, that $\rho$ is minimal in the above sense and that $\rho$ satisfies condition (iii) in the above proposition. Then the result follows from local versions of the proofs of (i) and (iii) in the previous proposition.  
\end{proof}

\section{Campana points and quadrics} \label{section:CPQ}
In this section we prove Theorems~\hyperref[thm:main1]{\ref*{thm:main1}\emph{\ref*{m1i}}} and \ref{thm:main2}.

\begin{proof}[Proof of {Theorem~\hyperref[thm:main1]{\ref*{thm:main1}\emph{\ref*{m1i}}}}]
First, we show that $(\P^{n+1}_{\O_S},\calQ)^{\cc}(\A_K) \neq \emptyset$. We will show the stronger result that $(\P^{n+1}_{\O_S}\setminus\calQ)(\calO_v) \neq \emptyset$ for all $v \in \Omega_K$. For $v \in S$ this is clear, since $(\P^{n+1}_{\O_S}\setminus\calQ)(\calO_v) = (\P^{n+1}\setminus Q)(K_v)$ which must be non-empty. Now let $v \not\in S$. It suffices by Hensel's lemma to show that $\calQ_v(\F_v) \neq \P^{n+1}(\F_v)$ which would imply existence of an $\O_v$-point on $\P^{n+1}_{\O_v}\setminus\calQ_{\O_v}$. In turn, it suffices to show that $\calQ_v$ is a (possibly reducible) quadric hypersurface in $\P^{n+1}_{\F_v}$. Indeed, by \cite{SER}, the maximum number of $\F_v$-points on a quadric hypersurface in $\P^{n+1}_{\F_v}$ is $ 2q_v^n + \#\P^{n-1}(\F_v) < \#\P^{n+1}(\F_v)$. Choosing a defining quadratic polynomial $f$ for $Q$ and viewing $f$ over $K_v$, we may multiply every coefficient by a suitable power of a $v$-adic uniformiser to produce a polynomial $\widetilde{f}$ such that the minimum $v$-adic valuation of the coefficients is zero and the zero loci of $f$ and $\widetilde{f}$ coincide in $\mathbb{P}^{n+1}_{K_v}$. It is easily seen that the zero locus of $\widetilde{f}$ in $\P^{n+1}_{\O_v}$ is precisely the closure of $Q_{K_v}$ in $\P^{n+1}_{\O_v}$, and that this closure coincides with $\calQ_{\O_v}$, but by composition of fibre products, the special fibre of $\calQ_{\O_v}$ is $\calQ_v$, and the special fibre of the zero locus of $\widetilde{f}$ is the zero locus of the reduction of $\widetilde{f}$ modulo $v$, which is a degree-$2$ polynomial defined over $\F_v$.

Let $(x_v)\in (\mathbb{P}_{\O_S}^{n+1},\calQ)^{\cc}(\A_K)$ be an adelic Campana point. Let $T\subset \Omega_K$ be a finite set of places at which we would like to approximate $(x_v)$ by a Campana point. Using weak approximation on $\mathbb{P}^{n+1}$, we may choose $y \in \mathbb{P}^{n+1}(K)$ which is $v$-adically close to $x_v$ for all $v \in T$. Let $L \subset \mathbb{P}^{n+1}$ be a $K$-rational line containing $y$ which intersects $Q$ in two distinct (not necessarily) points, and let $\calL_S$ be the closure of $L$ in $\P^{n+1}_{\O_S}$. The Campana orbifold $(\calL_S,\(1-1/m\)\calL_S\cap \calQ_S)$ satisfies CWA by \cite[Prop.~3.5,~Prop.~3.15]{NS} and has at least one adelic Campana point by the argument in the first paragraph applied to $L \cong \mathbb{P}^1$. For each $v\in T\setminus S$, observe that we have the following equality of ideals of $\calO_v$,
\[
(\overline{y}_{\O_v})^{*}(\calL_S\cap \calQ_S)=(\overline{y}_{\O_v})^{*}\calQ_S=(\overline{x_v})^{*}\calQ_S.
\]
Hence, $y \in (\calL_S, \(1-1/m\)(\calL_S\cap \calQ_S))(\calO_v)$ if $v\in T\setminus S$. It follows that there exists $z \in (\calL_S,\(1-1/m\)\calL_S\cap \calQ_S)(\calO_S)$ which approximates $y$ at each $v \in T$. Then $(\overline{z}_{\O_v})^{*}(\calL_S\cap \calC_S)=(\overline{z}_{\O_v})^{*}\calQ$ for every $v\notin S$, so that $z\in(\mathbb{P}_{\O_S}^{n+1},\(1-1/m\)\calQ)^{\cc}(\O_S)$ as well. Hence we have found a Campana point $z$ approximating our original $(x_v)$.
\end{proof}

Before proving Theorem \ref{thm:main2}, we recall some basic facts on quadric hypersurfaces. Let $Q \subset \P^{n+1}$ for $n\geq2$ be a smooth quadric hypersurface over a field $k$ such that $Q(k) \neq \emptyset$. Projection from $z \in Q(k)$ to a hyperplane not containing $z$ (which we identify with $\P^n$) induces a birational map $\phi\colon Q\dashrightarrow \P^n$. Denoting by $T_zQ$ the tangent hyperplane to $Q$ at $z$, it is easily seen that $\phi$ is defined away from $z$ and an isomorphism away from $T_z Q \cap Q$. Further, $\phi$ is resolved by blowing up $z$ to obtain a smooth projective variety $X = \Bl_zQ$ and a birational morphism $f: \Bl_z Q \rightarrow \mathbb{P}^n$, which contracts the lines on the strict transform of $T_zQ\cap Q$. Defining $B$ to be the smooth quadric $T_zQ \cap Q \cap \P^n \subset \P^n$, it follows that $X \to \P^n$ is the blow up of $B$.

Now let $H \subset \P^{n+1}$ be a hyperplane, and take $m \in \Z_{\geq 2}$. Set $\Delta_m := \(1-1/m\)Q \cap H$, and assume without loss of generality that $z \not\in Q \cap H$ in the above. We claim that $f$ is an isomorphism on $Q \cap H$. It suffices to show that $f|_{Q \cap H}$ is injective. Two points in $Q$ map to the same point of $\P^n$ under $f$ if and only if they lie on a line in $T_z Q \cap Q$, and all lines in the quadric cone $T_zQ \cap Q \subset T_zQ$ pass through $z$ \cite[p.~283]{HAR}, so injectivity follows since $z \not\in H$. Then $f(Q \cap H)$ is a quadric in $\mathbb{P}^n$, which we denote by $C$.

Let $m \in \Z_{\geq 2}$. Writing $C_m = \(1-1/m\)C$, we observe that $f: (Q,\Delta_m) \rightarrow (\P^n,C_m)$ is an orbifold birational map. Denote by $E_z,E_B \subset X$ the exceptional divisors contracted by the morphisms $X \rightarrow Q$ and $X \rightarrow \P^n$ respectively.

\begin{proof}[Proof of Theorem \ref{thm:main2}]
For $n=1$, we have $Q\cong \P^1$, so the result follows from \cite[Prop.~3.5,~Prop.~3.15]{NS}. For the rest of the proof, let $n\geq2$. If $(\calQ,\DDelta_m)^{\cc}(\A_K) = \emptyset$, then Campana weak approximation holds trivially, so assume for the rest of the proof that $(\calQ,\DDelta_m)^{\cc}(\A_K) \neq \emptyset$. In particular, we have $\prod_{v \in \Omega_K} Q(K_v) \neq \emptyset$, hence $Q(K) \neq \emptyset$ by the Hasse--Minkowski theorem. Further, projection from a rational point furnishes a birational map to $\mathbb{P}^n$, so $Q(K)$ is Zariski dense. On the local side, we see that $(\calQ,\DDelta_m)^{\cc}(\O_v)$ is infinite for all $v \in \Omega_K$. Indeed, projection from an initial local Campana point gives a birational map to $\mathbb{P}^n_{K_v}$, hence $Q(K_v)$ is dense in the $v$-adic topology, and there are infinitely many points in the non-empty $v$-adic open set $(\calQ,\DDelta_m)^{\cc}(\O_v)$.

Let $T \subset \Omega_K$ be finite, and let $x_v \in (\calQ,\DDelta_m)^{\cc}(\O_v)$, $v \in T$, be local Campana points which we would like to approximate by a Campana point. Since $Q(K)$ is Zariski dense, we may choose $z\in Q(K)$ not on $H$ such that $z\neq x_v$ for all $v\in T$. Denote by $\phi: Q \dashrightarrow \mathbb{P}^n$ the induced birational map to $\mathbb{P}^n$. Henceforth, we will expand $T$ whenever necessary by choosing further arbitrary points $x_v\in (\calQ,\DDelta_m)^{\cc}(\O_v)$. Note that $(\calQ,\DDelta_m)^{\cc}(\O_v)$ is infinite for all $v$, since $(\calQ,\DDelta_m)^{\cc}(\O_v)$ is a non-empty $v$-adic open subset of $Q(K_v)$, so we may choose these additional $x_v$ without violating our assumption that $z\neq x_v$ for all $v\in T$.

Denote by $T_zQ$ the tangent space of $Q$ at $z$. Let $\A^n\subset \P^n$ be the complement of the hyperplane $T_zQ \cap \mathbb{P}^n$, and let $U\subset Q$ be the complement of the singular quadric $T_zQ\cap Q$. Then $\phi$ restricts to an isomorphism $f\colon U\to \mathbb{A}^n$. Enlarging $T$ if necessary, we can assume that $f$ spreads out to an isomorphism $\widetilde{f}\colon \calU_T  \to \A^n_{\O_T}$.

Let $\calH$ be the Zariski closure of $H$ in $\calQ$. Enlarging $T$ if necessary, the closure $\overline{z}\subset \calQ_T$ does not meet $\calH_T$. As observed in \cite[\S22]{HAR}, we may identify the graph of $\phi$ with the blowup of $\mathbb{P}^n$ along the smooth quadric $B = T_zQ \cap Q \cap \mathbb{P}^n$ in the hyperplane $T_z Q \cap \mathbb{P}^n \subset \mathbb{P}^n$. Choose $w_1\in (T_zQ \cap \mathbb{P}^n)(K)$ not lying in $B$. Enlarge $T$ so that $\overline{w_1}$ does not meet $\calB_T$.

Recall that $X=\Bl_z Q=\Bl_B\P^n$ and $C = f\left(Q \cap H\right), C_m = \(1-1/m\)C$. Let $(X,\widetilde{C}_m)$ be the orbifold structure induced from $X\to \P^n$ (Definition \ref{def:induced}). We have the picture:

\[
\begin{tikzcd}[column sep=small]
& (X,\widetilde{C}_m)\arrow[dl]\arrow[rd] \\
(Q,\Delta_m) \arrow[rr,dotted,"f"] && (\P^n,C_m).
\end{tikzcd}
\]

Observe that $(X,\widetilde{C}_m)\to (\P^n,C_m)$ and $(X,\widetilde{C}_m)\to (Q,\Delta_m)$ are both EBOMs. Indeed, the induced orbifold structure associated to a birational morphism makes it an EBOM, while the map down to $Q$ contracts the exceptional divisor to a point not on $\Delta$, and the underlying divisor of $\widetilde{C}_m$ is the strict transform of both $C$ and $\Delta=Q\cap H$ under their respective maps. Since $(\calX, \widetilde{\calC}_m)\to (\calQ,\DDelta_m)$ is an EBOM, by Proposition \hyperref[prop:ebom]{\ref*{prop:ebom}\emph{\ref*{ebomii}}}, we can and will enlarge the set of places $T$ so that Campana points map to Campana points. Then we have a map
\begin{equation}\label{eqn:blowdown}
(\calX, \widetilde{\calC}_m)^{\cc}(\O_T)\to (\calQ,\DDelta_m)^{\cc}(\O_T).
\end{equation}

Using strong approximation on $\A^n_{\O_T}$, choose an $\O_T$-point $w_2$ of $\A^n_{\O_T}$ approximating each $f(x_v)$ for $v\in T$. Consequently, $\overline{w_1}$ and $\overline{w_2}$ do not meet over $\calO_T$. Let $L$ be the line that connects $w_1$ and $w_2$. Let $\calL_T$ be its closure in $\mathbb{P}^n_{\O_T}$.

By Theorem \ref{thm:main1}, $(\calL_T,\calC_T\cap \calL_T)$ satisfies CWA and has at least one Campana point, so we can choose $y\in (\calL_T,\calC_T\cap \calL_T)^{\cc}(\O_T)$ $v$-adically approximating $w_2$ for all $v\in T$.

If $\overline{y}$ meets $\calB_T$, then $\overline{y}_v\in f(E)$ for some $v\notin T$. Since $\calL\cap f(E)=\overline{w_1}$, this implies $\overline{y}_v = (\overline{w_1})_v$, contradicting that $\overline{w_1}$ does not meet $\calB_T$. Then $\overline{y}$ does not meet $\calB_T$ and is by Proposition \hyperref[prop:ebom]{\ref*{prop:ebom}\emph{\ref*{ebomi}}} in the image of $(\calX, \widetilde{\calC}_m)^{\cc}(\O_T)\to (\P_{\O_T}^n,\calC_m)^{\cc}(\O_T)$; let $y'$ be its preimage.

The image $f^{-1}(y)$ of $y'$ under \eqref{eqn:blowdown} is a Campana point; since $f^{-1}(y)$ approximates $x_v$ for each $v\in T\cap S$, it is Campana for $v\in T\cap S$ as well, so $f^{-1}(y)\in (\calQ,\calH)^{\cc}(\O_S)$.
\end{proof}

\section{Darmon points for a quadric divisor} \label{section:D1}

In this section we prove Theorem \hyperref[thm:main1]{\ref*{thm:main1}\emph{\ref*{m1ii}}}. A key observation regarding the abundance and distribution of Darmon points is that DWWA implies the Darmon Hilbert property. That is, if the Darmon points of an orbifold satisfy weak approximation away from some finite set of places of the ground field, then they must form a non-thin set of rational points. The Campana analogue of this result \cite[Thm.~1.1]{NS} was used by the second and third authors to prove the non-thinness of Campana points for many ``linear'' orbifold structures on projective space. The Darmon version follows in an entirely analogous way: the key ingredient of the proof, that the local Campana points form an open subset in the local analytic topology, remains true for Darmon points.

Here, we shall use the contrapositive of this result to prove that orbifolds fail DWWA. This is perhaps not surprising: the conditions imposed on Darmon points, requiring certain values to be a perfect $m$th power up to units, are of an inherently thin nature. Indeed, thin subsets of rational points are those which in some sense come from solubility of non-linear equations over the ground field, which is rare in fields of arithmetic interest. The following is a restatement of Theorem \hyperref[thm:main1]{\ref*{thm:main1}\emph{\ref*{m1ii}}}. 

\begin{proposition}\label{prop:Darmonthin}
Let $Q\subset \P^{n+1}$ be a quadric defined by a quadratic form $f(X_0,\dots,X_{n+1})$ over a number field $K$, and let $m\in\Z_{\ge2}$. Let $Q_m$ be the $\QQ$-divisor $\(1-1/m\)Q$ of $\PP^{n+1}$. Let $S \subset \Omega_K$ be a finite subset of places containing $\Omega_K^\infty$, and take the $\O_S$-model $(\PP^{n+1}_{\O_S},\calQ_m)$ of $(\PP^{n+1},Q_m)$. If $m$ is even, then the set of Darmon points $(\PP^{n+1}_{\O_S},\calQ_m)^{\dd}(\O_S)$ is thin.
\end{proposition}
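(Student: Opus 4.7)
The plan is to realize the Darmon points, modulo those lying on $Q$, as images of $K$-points on finitely many double covers of $\PP^{n+1}$, and then invoke the definition of a thin set. Since enlarging $S$ only enlarges $(\PP^{n+1}_{\O_S}, \calQ_m)^{\dd}(\O_S)$, I will first enlarge $S$ to make $\O_S$ a principal ideal domain; showing thinness of the enlarged Darmon set will imply thinness of the original. I may also rescale $f$ to have content one in $\O_S$, so that $\calQ = Z(f)$ as closed subschemes of $\PP^{n+1}_{\O_S}$; then $n_v(\calQ, P) = v(f(x))$ whenever $x \in \O_S^{n+2}$ is a primitive integral representative of $P \in \PP^{n+1}(K)$, and the Darmon condition becomes $v(f(x)) \equiv 0 \pmod{m}$ for every $v \notin S$.

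Next, I will isolate the finitely many cosets of $(K^*)^m$ into which $f(x)$ can fall. Set
\[
G = \{a \in K^* : v(a) \equiv 0 \pmod{m} \text{ for all } v \notin S\}.
\]
Since $\O_S$ is a PID, every fractional ideal supported outside $S$ is principal, and an elementary argument shows $G = \O_S^* \cdot (K^*)^m$. The second isomorphism theorem then gives $G/(K^*)^m \cong \O_S^*/(\O_S^*)^m$, which is finite by Dirichlet's $S$-unit theorem. Pick representatives $u_1, \ldots, u_s \in \O_S^*$; for any Darmon point $P = [x]$ with $f(x) \neq 0$, we have $f(x) = u_i a^m$ for some $i$ and some $a \in K^*$.

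Now the parity hypothesis enters: writing $m = 2k$ and setting $b = a^k$, we get $f(x) = u_i b^2$, so $(x_0, \ldots, x_{n+1}, b)$ defines a $K$-point of the hypersurface $Y_{u_i} \subset \PP^{n+2}$ cut out by $u_i Y^2 - f(X_0, \ldots, X_{n+1})$. Because $\calQ$ is reduced (so $f$ is squarefree, and in particular not a $K^*$-multiple of a perfect square), the polynomial $u_i Y^2 - f(X)$ is irreducible in $K[X_0, \ldots, X_{n+1}, Y]$, whence $Y_{u_i}$ is a geometrically integral quadric and the projection $\pi_i \colon Y_{u_i} \to \PP^{n+1}$ omitting $Y$ is a dominant generically finite morphism of degree $2$. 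Hence $\pi_i(Y_{u_i}(K))$ is a type II thin subset of $\PP^{n+1}(K)$.

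Putting everything together, the set $(\PP^{n+1}_{\O_S}, \calQ_m)^{\dd}(\O_S)$ is contained in
\[
Q(K) \cup \bigcup_{i=1}^s \pi_i(Y_{u_i}(K)).
\]
The first piece lies in the proper closed subvariety $Q \subsetneq \PP^{n+1}$ and is thus type I thin, and the union of finitely many thin sets is thin. The most delicate step is the coset-counting identification $G/(K^*)^m \cong \O_S^*/(\O_S^*)^m$, which rests on the PID reduction and Dirichlet; the rest is a direct unwinding of the Darmon condition combined with the factorization $Y^m = (Y^{m/2})^2$ available precisely when $m$ is even.
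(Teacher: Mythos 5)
Your proof is correct and follows essentially the same route as the paper: parameterize Darmon points by $K$-points on finitely many degree-$2$ covers of $\PP^{n+1}$ indexed by $S$-unit classes modulo powers, then conclude thinness. The differences are cosmetic: the paper first reduces to $m=2$ via the inclusion $(\PP^{n+1}_{\O_S},\calQ_{2m'})^{\dd}(\O_S)\subset(\PP^{n+1}_{\O_S},\calQ_{2})^{\dd}(\O_S)$ and then quotients by $(\O_S^*)^2$, whereas you work with general even $m$ and $(\O_S^*)^m$, invoking the factorization $Y^m=(Y^{m/2})^2$ at the end. One point you make explicit that the paper glosses over is the justification of the decomposition $f(x)=u t^2$ with $u\in\O_S^*$: this step needs $\O_S$ to be a PID (or at least for $\Cl(\O_S)$ to have no $2$-torsion), which your enlargement of $S$ secures; an alternative fix, closer to the paper's phrasing, would allow the coset representatives $u$ to range over $K^*$ rather than $\O_S^*$, using only finiteness of $G/(K^*)^2$. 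You also spell out that $Q(K)$ is a type I thin set and that the covers $Y_{u_i}$ are geometrically irreducible, details the paper leaves implicit.
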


\begin{proof}
Assume first that $m$ is even, so that $m = 2m'$ for some $m' \in \mathbb{Z}_{>0}$. Since $(\PP^{n+1}_{\O_S},\calQ_{2m'})^{\dd}(\O_S) \subset (\PP^{n+1}_{\O_S},\calQ_2)^{\dd}(\O_S)$, it suffices to consider the case $m=2$. We will construct an explicit collection of covers of $\PP^{n+1}$ such that the image of rational points from the covering varieties contains the Darmon points.

Note that, for $P=[a_0:\ldots:a_{n+1}]\in \P^{n+1}(K)$ to be a Darmon point, we must have $f(a_0,\ldots,a_{n+1}) = ut^2$ for some $t \in \O_S$ and unit $u\in\O_S^*$. It follows from Dirichlet's unit theorem for $S$-units that the set $\O_S^*/\O_S^{*2}$ is finite. Choose a set of representatives $\{u_1,\dots,u_r\}$ for this finite quotient.

Define for each $i \in \{1,\dots,r\}$ the smooth quadric hypersurface
\[
Y_i: f\left(X_0,\ldots,X_{n+1}\right) = u_iX_{n+2}^2\subset \P^{n+2}.
\]
The maps $\pi_i: Y_i \rightarrow \PP^{n+1}$, $\pi_i([X_0:\dots:X_{n+1}:X_{n+2}]) = [X_0:\dots:X_{n+1}]$ are finite of degree $2$ and that $(\PP^{n+1}_{\O_S},\calQ_m)^{\dd}(\O_S) = \bigcup_{i=1}^rf_i(Y_i(K))$, hence $(\PP^{n+1}_{\O_S},\calQ_m)^{\dd}(\O_S)$ is thin.
\end{proof}

It is unclear whether $(\PP^{n+1}_{\O_S},\calQ_m)^{\dd}(\O_S)$ is thin when $m$ is odd. However, we give an example demonstrating that DWWA often fails in the case $n=0$.

\begin{example}\label{ex:DWWAfail}
Let $m\ge 3$ be odd. Let $Q$ be given by a binary quadratic form $f(X,Y)$ with coprime integer coefficients. Let the discriminant of $f$ be $d$ and set $L:=\Q(\sqrt{d})$. Then $f(X,Y)$ factors as $c\ell(X,Y)\ell'(X,Y)$ over $L$ for some $c \in \Q$, for $\ell$ and $\ell'$ conjugate linear forms. We will show that, choosing $m$ coprime to $d$, DWWA fails for $\left(\mathbb{P}^1_{\Z},\calQ_m\right)$.

By Dirichlet's unit theorem, $\O_L^*/\O_L^{* m}$ is finite. Choose a prime $p$ that is inert in $L/\Q$ such that the coefficients of $f,\ell$ and $\ell'$ are $p$-adic units. Then the residue field of $\O_L$ at $p$ is $\F_{p^2}$. Since $m$ and $d$ are coprime, it follows from the Chinese remainder theorem that we may assume that $m \mid (p-1)$. We do so; then $\F_p^*/\F_p^{* m}$ is non-trivial. The injection
\[
\frac{\F_p^*}{\F_p^{* m}}\hookrightarrow \frac{\F_{p^2}^*}{\F_{p^2}^{* m}}
\]
has image of index $p+1$. Consider the reduction map $\pi\colon \O_L^* \to \F_{p^2}^*/\F_{p^2}^{* m}$. The image of $\pi$ is bounded independently of $p$ as it factors through $\O_L^*/\O_L^{*m}$. Hence, we assume $p$ is large enough so that $\pi(\O_L^*)\F_p^*$ does not generate all of $\F_{p^2}^*/\F_{p^2}^{* m}$. Choose $\overline{x},\overline{y}\in \F_p^*$ so that $\ell(\overline{x},\overline{y})\in \F_{p^2}^*$ lies in the complement. Lift this to $x',y'\in \Q_p^*$. Then $[x':y']$ is a $p$-adic Darmon point, since the $p$-adic valuation of $f(x',y')=c\ell(x',y')\ell'(x',y')$ is $0$. We claim this point cannot be approximated by a global Darmon point. Let $[x:y]\in\P^1(\Q)$ be a global Darmon point where $x,y\in\Z$ and $\min(v_p(x),v_p(y))=0$. The Darmon condition at $p$ implies $f(x,y)=cut^m$ where $u,t\in\Z$ and $v_p(u)=0$. Rescaling $[x:y]=[xu^{(m-1)/2}:yu^{(m-1)/2}]$, we may assume that $f(x,y)=ct^m$, hence $\ell(x,y)\ell'(x,y)=t^m$. Since $m$ is odd, $t$ is then also a norm from $L/\Q$, i.e.\ $t=N(a)$ for some $a\in L^*$. Since $p$ is inert, $v_p(t)=2v_p(a)$, so $v_p(a)\geq0$. Hence, $N_{L/\Q}(a)^m=\ell(x,y)\ell'(x,y)$. Moreover,
\[
a^mu=\ell(x,y)
\]
for some $u\in L^*$ of norm $1$. Note that $v_p(u)=0$ as $p$ is inert. Thus we may consider $a^mu$ as an element in $\O_{L,p}$. If $[x:y]$ approximates $[x':y']$, then up to scaling by some $t\in \F_p^{*}$,
\[
a^mu\bmod v= t\ell(\overline{x},\overline{y}).
\]
But this would mean that $\ell(\overline{x},\overline{y})\in \pi(\O_L^*)\F_p^*\bmod \F_{p^2}^{* m}$, which is a contradiction.
\end{example}

\section{Darmon points on quadric hypersurfaces} \label{section:D2}

In this section we shall prove Theorem~\ref{thm:main3}. We set the following conventions: we reserve $(Q,\Delta_m)$ for a Campana orbifold as in the statement of Theorem~\ref{thm:main2} with $\O_S$-model $(\sQ,\DDelta_m)$ for some finite set $S \supset \Omega_K^\infty$. We also set $U := Q \setminus \Delta$ and $\sU = \sQ \setminus \DDelta$. As we shall soon see, $\Br U / \Br K$ is either trivial or cyclic of order 2. If $\Br U / \Br K \cong \ZZ/2\ZZ$ define $\Omega_{K,\sA}$ as the set of places of $K$ where the local invariant map of the generator $\sA \in \Br U$ of $\Br U / \Br K$ is constant and let $T \subset \Omega_{K,\sA}$ be a finite subset.

Our proof is based on the existence of a large place $v$, ``zooming in'' at which allows us to deduce both the lack of a Brauer--Manin obstruction to the existence of strict Darmon points and the presence of an obstruction to Darmon strong approximation off $T$.

\begin{proof}[Proof of Theorem~\ref{thm:main3}]
  Recall that strict Darmon points of $(\sQ, \DDelta_m)$ lie on $U$, while non-strict Darmon points come from $\Delta$. Since $m$ is finite and $\Delta$ is irreducible, it follows that the set of non-strict Darmon points is the set of $K$-rational points on $\Delta$. The assumption on $\Delta$ now implies that it is an $(n - 1)$-dimensional smooth projective quadric, so it satisfies the Hasse principle and weak approximation for $K$-rational points. In view of this and the fact that the adelic and the product topologies coincide for adelic spaces of projective varieties, given a non-empty non-strict adelic set we can deduce that all semi-integral local-global principles are satisfied on the non-strict part. To investigate Brauer--Manin obstructions we can thus focus on strict Darmon points. Assume that $(\sQ, \DDelta_m)_{\str}^{\dd}(\A_K) \neq \emptyset$. It follows that $Q(K) \neq \emptyset$ by the Hasse--Minkowski theorem. Moreover, not all rational points on $Q$ are contained in $\Delta$, so $U(K) \neq \emptyset$. Therefore $\Br K$ injects into $\Br U$.   

  Assume first that $n = 2$. Let $q(x,y,z,t)$ be a quadratic form defining $Q \subset \PP^3$ and let $\ell$ be a linear form defining $H$. A similar argument to the one used in \cite[Lem.~2.1]{VAV} or \cite[\S5.8]{CTX} shows that there is $c \in K \setminus \{0\}$ and linear forms $\ell_1, \ell_2, \ell_3, \ell_4$ such that $q = \ell_1\ell_2 - c(\ell_3^2 - d\ell_4^2)$ for $d$ the discriminant of $q$. Indeed, since $Q(K) \neq \emptyset$, the quadratic space associated to $Q$ contains a hyperbolic plane. Taking a diagonal basis of the orthogonal component to this hyperbolic plane gives the desired representation of $q$, so we have
  \begin{equation}
  	\label{eqn:Xl_i}
  	Q: \ \ell_1 \ell_2 = c\(\ell_3^2- d\ell_4^2\).
  \end{equation}

  To proceed with proving Theorem~\ref{thm:main3}, we must understand the structure of $\Br U$, which is done in the next lemma.

  \begin{lemma}
  	\label{lem:quadricsBGp}
  	We have 
  	\[
  		\Br U/\Br K =
  		\begin{cases}
  			0 &\text{if } d \in K^{*2}, \\
  			\ZZ/2\ZZ &\text{otherwise}.
  		\end{cases}
  	\]
    Moreover, when non-trivial, this group is generated by the image of $\sA = (\ell_1/\ell, d) \in \Br U$.
  \end{lemma}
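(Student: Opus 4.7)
The plan is to compute $\Br U / \Br K$ via the Hochschild--Serre spectral sequence, reducing the problem to Galois cohomology of the geometric Picard group. The key short exact sequence
\[
0 \to \Br K \to \Br U \to H^1(K, \Pic \overline{U}) \to 0
\]
requires three inputs: (i) $\overline{K}[\overline{U}]^{*} = \overline{K}^{*}$, which follows from $\overline{\Delta}$ being an irreducible divisor with non-torsion class in $\Pic \overline{Q}$; (ii) $\Br \overline{U} = 0$, which I would deduce via purity applied to the pair $(\overline{Q}, \overline{\Delta})$, using that $\Br \overline{Q} = 0$ (smooth projective rational surface over an algebraically closed field) together with $H^1(\overline{\Delta}, \QQ/\ZZ) = 0$ (since $\overline{\Delta} \cong \PP^1_{\overline{K}}$, any smooth conic over an algebraically closed field being rational); and (iii) the existence of a $K$-rational point on $U$, which holds because $Q(K) \neq \emptyset$ forces $Q$ to be $K$-birational to $\PP^2$, whence $Q(K)$ is Zariski-dense and meets the open subset $U$. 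The $K$-point splits off the $\Br K$ summand and annihilates the $H^3(K,\overline{K}^{*})$ differential, yielding the displayed sequence.

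Next I would compute $\Pic \overline{U}$ as a $G_K$-module. Since $\overline{Q} \cong \PP^1_{\overline{K}} \times \PP^1_{\overline{K}}$, we have $\Pic \overline{Q} = \ZZ L_1 \oplus \ZZ L_2$ generated by the two rulings, with $G_K$ acting through $\Gal(L/K)$ for $L = K(\sqrt{d})$ by swapping $L_1$ and $L_2$ (trivially when $d$ is a square). The hyperplane section has class $L_1 + L_2$, so $\Pic \overline{U} = \ZZ^2 / \ZZ(1,1) \cong \ZZ$, on which the non-trivial element of $\Gal(L/K)$ acts as $-1$. A direct cyclic-cohomology calculation then gives $H^1(K, \Pic \overline{U}) = 0$ when $d \in K^{*2}$ (trivial action on $\ZZ$, and $\Hom_{\text{cts}}(G_K, \ZZ) = 0$), and $\cong \ZZ/2\ZZ$ otherwise, establishing the stated isomorphism class of $\Br U / \Br K$.

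Finally, in the non-square case I would identify $\sA = (\ell_1/\ell, d)$ as an explicit generator. I would check $\sA \in \Br U$ by verifying that its tame symbol vanishes along the only a priori problematic divisor inside $U$, namely $Z(\ell_1) \cap Q$: on this divisor the relation $\ell_1\ell_2 = c(\ell_3^2 - d\ell_4^2)$ forces $d \equiv (\ell_3/\ell_4)^2$ modulo squares, which trivialises the symbol. Computing $\partial_\Delta \sA = d \in K(\Delta)^{*}/K(\Delta)^{*2}$ via the tame symbol for quaternion algebras, and using that geometric integrality of $\Delta$ implies $K^{*} \cap K(\Delta)^{*2} = K^{*2}$, one concludes $\sA \notin \Br Q = \Br K$. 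Hence $\sA$ is a non-trivial element of the order-$2$ group $\Br U / \Br K$, and therefore a generator. The main obstacle I anticipate is the careful verification of the Hochschild--Serre hypotheses (especially $\Br \overline{U} = 0$ and the unit computation) together with the correct identification of the Galois action on $\Pic \overline{U}$ via the discriminant extension; the residue computation for $\sA$, while slightly delicate at $Z(\ell_1)\cap Q$, is essentially routine.
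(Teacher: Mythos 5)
Your proposal is correct and would serve as a self-contained substitute for what the paper actually does, which is to diagonalise $q$ as $q'(X,Y,Z)-nT^2$ via the orthogonal decomposition associated to the hyperplane $\ell=0$ and then invoke \cite[Prop.~7.3.2]{CTS2} for the resulting affine quadric. In effect you are unfolding that citation: the Hochschild--Serre argument, the computation of $\Pic\overline{U}\cong\ZZ$ with the sign action through $\Gal(K(\sqrt d)/K)$, and the cyclic-cohomology calculation are exactly the internals of the cited proposition. All your intermediate checks are sound: $\overline{K}[\overline{U}]^{*}=\overline{K}^{*}$ because $[\overline\Delta]=L_1+L_2$ is non-torsion in $\Pic\overline{Q}\cong\ZZ^2$; $\Br\overline{U}=0$ by purity and the simple connectedness of $\overline{\Delta}\cong\PP^1_{\overline K}$; $U(K)\neq\emptyset$ by Zariski density of $Q(K)$; and $H^1(\ZZ/2,\ZZ_{\mathrm{sgn}})=\ZZ/2\ZZ$ by the usual norm-minus-one calculation with inflation handling the profinite reduction. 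Where your route diverges slightly in flavour is in verifying $\sA\in\Br U$: you compute the residue of $(\ell_1/\ell,d)$ along $Z(\ell_1)\cap Q$ directly and observe that the equation of $Q$ forces $d$ to be a square in that residue field, whereas the paper uses the symmetric relation $(\ell_1/\ell,d)=(c\ell_2/\ell,d)$ (a consequence of $\ell_1\ell_2$ being a norm from $K(\sqrt d)$) together with purity, switching representatives to avoid any divisor of $U$. Both are valid; yours is arguably tidier as it avoids invoking purity a second time. The final step, reading the residue $\partial_\Delta\sA=d$ and using geometric integrality of $\Delta$ to get $K^{*}\cap K(\Delta)^{*2}=K^{*2}$, matches the paper exactly. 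The net trade-off is that you gain self-containment and an explicit identification of the Galois module, at the cost of reproducing an argument the paper delegates to the literature.
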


  \begin{proof}
  	It suffices to check that, after a linear change of variables, $q(x, y, z, t)$ takes the shape $q'(X, Y, Z) - nT^2$, where $\Delta$ is defined by $T = 0$. This puts us in a position to argue as in \cite[Prop.~7.3.2]{CTS2}, which verifies the first claim in the statement.

  	To see that $q$ takes this shape after a linear change of variables, note that $\ell = 0$, which defines the irreducible divisor $\Delta$, defines a $3$-dimensional subspace of the quadratic space associated to $q$. It is thus associated to a quadratic form $q'(X, Y, Z)$ in three variables. Its orthogonal component is of dimension one, hence under a suitable change of variables, $q(x, y, z, t)$ becomes $q'(X, Y, Z) - nT^2$ for some $n \in K$, where $T = 0$ defines $\Delta$ in $Q$.

    Assume from now on that $d \in K^* \setminus K^{*2}$ and therefore $\Br U / \Br K \simeq \ZZ/2\ZZ$. Following the same line of arguing as in \cite[\S5.8]{CTX} or \cite[Prop.~7.3.2]{CTS2} one verifies that the image of $\sA = (\ell_1/\ell, d) \in \Br U$ generates $\Br U / \Br K$. Indeed, \eqref{eqn:Xl_i} shows that $(\ell_1/\ell, d) = (c\ell_2/\ell, d)$ is unramified on $Q$ away from the hyperplane section with $H$ and the finitely many points given by $\ell_1 = \ell_2 = 0$ which form a codimension-$2$ subvariety. Thus $\sA$ lies in $\Br U$ by the purity theorem. Moreover, \eqref{eqn:Xl_i} implies that the residue of $\sA$ along $\Delta$ equals the class of $d$ in $K^*/K^{*2}$ which by assumption is non-trivial. Therefore $\sA$ does not lie in $\Br K \subset \Br U$, hence its image in $\Br U / \Br K$ generates that group.
  \end{proof}

  Going back to the case of general $n \ge 2$ we will employ Proposition~\ref{prop:BMOequiv} and the status of local-global principles for rational points on $U$ to deduce the claims of Theorem~\ref{thm:main3}. By a similar argument to the one appearing in the proof of Lemma~\ref{lem:quadricsBGp}, it suffices to consider the case where $U$ is given by $q(\bfx) = n$ with $n \in K$ and $q$ a quadratic form in $n+1$ variables. Then $\Delta(K_{v_0}) \neq \emptyset$ ensures that $q$ is isotropic over $K_{v_0}$.

$\Br U/ \Br K$ is trivial if $n \ge 3$ by \cite[Thm.~7.3.1]{CTS2} or if $n = 2$ and $d \in K^{*2}$ by Lemma~\ref{lem:quadricsBGp}. Thus $(\sQ, \DDelta_m)_{\str}^*(\AA_{K, S \cup \{v_0\}}^{\{v_0\}})^{\Br} = (\sQ, \DDelta_m)_{\str}^*(\AA_{K, S}^{\{v_0\}})$ and (ii) is implied by (i) in this case.

  The claim in (i) follows from the combination of Proposition~\ref{prop:BMOequiv} with the existing results in the literature about strong approximation off $v_0$ for $K$-rational points on $U$. For $n \ge 3$ these are established by Eichler and Kneser (\cite[Prop.~3.2]{CTX13}) and for $n = 2$ strong approximation with Brauer--Manin obstruction off $v_0$ follows from \cite[Prop.~4.5]{CTX13}.

  \begin{note}
    Alternatively, in view of Proposition~\ref{prop:BMOequiv}, an interplay between integral models shows that results about strong approximation off $v_0$ for $(S \cup \{v_0\})$-integral points on $\sU$ may be used instead. These are Colliot-Th\'el\`ene and Xu's \cite[Thm.~6.3]{CTX} if $n = 2$ and Kneser's \cite[Thm.~6.1]{CTX} if $n \ge 3$. In fact, one will need slightly generalised versions of the cited theorems for $(S \cup \{v_0\})$-integral points (instead of $\sO_{\{v_0\}}$-points) which follow from \cite[Thm.~3.7]{CTX} by taking $S_0 = \{v_0\}$ in its proof.
  \end{note}

  Assume now that $n = 2$ and $\Br U / \Br K \cong \ZZ/2\ZZ$. The statement of (iii) follows from Theorem~\hyperref[thm:main0]{\ref*{thm:main0}\emph{\ref*{m0iii}--\ref*{m0iv}}} as $\Br Q = \Br K$ and $\Br (Q, \Delta_m) = 0$ for odd $m$ subject to the non-prolific condition on $\sA$ at $T$. If $d \in K_v^{*2}$, then that condition is certainly met at $v$ since the local invariant map of $\sA$ vanishes there. Take $\Omega_{K, \sA}$ to be the set of places $v$ for which $d \in K_v^{*2}$ and those $v$, where one of $d$ or $1/d$ is not a unit of $\sO_v$ but $\inv_v \sA$ is constant at $v$. This shows that for any $T \subset \Omega_{K, \sA}$ the Brauer element $\sA$ is not prolific at $T$. Moreover, $\Omega_{K, \sA} \subset \Omega_K$ clearly has a density $1/2$ as $d, 1/d \in \sO_v^*$ for all but finitely many $v$. At the same time $d$ is a square in $K_v$ for half of the places of $\Omega_K$ where it is a unit of $\sO_v$ while it is a non-square for the other half.

  There is also a direct way to prove (iii), giving an explicit version of Theorem~\ref{thm:HarariA} in this case and highlighting that arbitrarily big places of $K$ play a role in the Brauer--Manin obstruction, which we include below.

  To prove Theorem~\ref{thm:main3} (iii), it suffices to show that there is$v \in \Omega_K$ for which the local invariant map of $\sA$ takes all possible values when evaluated at the $v$-adic strict $*$-points. Indeed, let $P_1, P_2 \in (\sQ, \DDelta_m)_{\str}^{*}(\sO_v)$ such that $\inv_v \sA(P_1) = 0$ and $\inv_v \sA (P_2) = 1/2$. Let $(M_w)_w \in (\sQ, \DDelta_m)_{\str}^{*}(\A_K)$. If $\sum_{w \neq v} \inv_w \sA(M_w) = 0$, then replace $M_v$ by $P_1$ to obtain a point in $(\sQ, \DDelta_m)_{\str}^{*}(\AA_K)^{\Br}$. If instead $\sum_{w \neq v} \inv_w \sA(M_w) = 1/2$, then replace $M_v$ by $P_2$ to get the same conclusion. Thus there is no Brauer--Manin obstruction to the existence of strict $(S \cup \{v_0\})-*$-points on $(\calQ, \DDelta_m)$. 

  Let $\Omega_{K, \sA}$ be the set of places $v \in \Omega_K$ for which $\inv_v \sA$ is constant. Making the same substitutions as above but swapping the roles of $P_1$ and $P_2$ shows that there is a $*$-adelic point outside $(\sQ, \DDelta)_{\str}^{*}(\AA_K)^{\Br}$, that is $(\sQ, \DDelta)_{\str}^{*}(\AA_K)^{\Br} \subsetneq (\sQ, \DDelta)_{\str}^{*}(\AA_K)$. By Proposition~\hyperref[prop:B-Mset]{\ref*{prop:B-Mset}\emph{\ref*{BMi}}} the closure of $(\sQ, \DDelta)_{\str}^{*}(\sO_{S \cup T})$ is contained in $(\sQ, \DDelta)_{\str}^{*}(\AA_K)^{\Br}$, thus there is a Brauer--Manin obstruction to $T$-$*$-strong approximation for any choice of $T \subset \Omega_{K, \sA}$. Indeed, the local invariant map of $\sA$ is constant at each place of $T$, so the value of the sum of local invariant maps does not depend on the choice of components at places in $T$ of an adelic point, thus $(\sQ, \DDelta)_{\str}^{*}(\AA_K^T)^{\Br} \subsetneq (\sQ, \DDelta)_{\str}^{*}(\AA_K^T)$.

  It remains to find a place of $K$ satisfying our claim. Let $v$ be a non-archimedean place of $K$ of inertia degree one, let $K_v$ be the associated completion of $K$ with ring of integers $\O_v$, a uniformiser $\pi_v$ and let $\Fv = \O_v/(\pi_v)$ be the residue field of $v$. Assume that $d \in \O_v$ is a unit and that it is not a square in $K_v^*$.

  Since the forms $\ell_i$ appearing in \eqref{eqn:Xl_i} are $K$-linearly independent, there are constants $c_i \in K$ such that $\ell = c_1\ell_1 + \dots + c_4\ell_4$. We can take $v$ so that $c$ from \eqref{eqn:Xl_i}, all of the non-zero $c_i$ and the non-zero coefficients in front of $x, y, z, t$ in $\ell_i$ are all units in $\O_v$. Let $A$ be the matrix formed from the coefficients in front of $x, y, z, t$ in $\ell_i$ where the coefficients of $\ell_1$ form the first row, the coefficients of $\ell_2$ form the second row and so on. We shall additionally require the discriminant of $A$ to be a unit in $\O_v$. Note that there are infinitely many choices of $v$ satisfying the assumption in the previous paragraph, while there are only finitely many $v$ failing the ones made here. Thus not only a place $v$ satisfying our assumptions exists but such places have density $1/2$ in $\Omega_K$ and we can take $v$ as large as we wish. In particular, this and the shape of $\sA$ also imply that $\Omega_{K, \sA}$ differs by finitely many places form the set of $v \in \Omega_K$ such that $d$ is a quadratic residue mod $v$.

  What is left is to show the existence of strict $*$-$\sO_v$-points on $(\sQ, \DDelta_m)$ for which $\inv_v \sA$ takes two values. This is a direct consequence of Theorem~\ref{thm:HarariA} together with the fact that for all but finitely many places $v$, there exists $P_v\in\sU(\sO_v)$ such that $\inv_v\calA(P_v)=0$. However, this claim can also be verified fairly easily on the level of Darmon points with an explicit argument which we include below.

  \begin{lemma}
    \label{lem:inv-surjectivity}
    There exist $P_1, P_2 \in (\sQ, \DDelta_m)_{\str}^{\dd}(\O_v)$ with $\inv_v \sA(P_1) = 0$, $\inv_v \sA(P_2) = 1/2$.  
  \end{lemma}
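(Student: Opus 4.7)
The plan is to exploit the explicit Hilbert symbol description $\sA = (\ell_1/\ell, d)$. Since $d \in \O_v^*$ is a non-square, the extension $K_v(\sqrt{d})/K_v$ is unramified with norm group $\{x \in K_v^* : v(x) \text{ even}\}$, so for $P \in U(K_v)$ one has $\inv_v \sA(P) = \tfrac{1}{2}(v(\ell_1(P)) - v(\ell(P))) \pmod 1$. A key preliminary observation is that $v(\ell_1(P)) = v(X_1)$ is \emph{always even} on normalised coordinates: the defining equation $X_1 X_2 = c(X_3^2 - dX_4^2)$ gives $v(X_1) + v(X_2) = v(X_3^2 - dX_4^2)$, and since $d$ is a non-square unit, no cancellation occurs and this equals $2\min(v(X_3), v(X_4))$; a quick case analysis by which coordinate realises the minimum valuation $0$ then forces $v(X_1)$ to be even. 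The task reduces to producing strict Darmon $\O_v$-points $P_1, P_2$ with $v(\ell(P_1))$ even and $v(\ell(P_2))$ odd.

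To construct $P_1$, I will exhibit a point of $\sU(\O_v) \subseteq (\sQ, \DDelta_m)_{\str}^{\dd}(\O_v)$. After possibly enlarging the finite exclusion set, $\sQ_v$ is a smooth quadric surface in $\PP^3_{\F_v}$ carrying $\gg q_v^2$ rational points, whereas the divisors $\DDelta_v$ and $Z(\ell_1)_v \cap \sQ_v$ together contain only $O(q_v)$ $\F_v$-points. A smooth $\F_v$-point of $\sQ_v$ lying off both divisors therefore exists and Hensel-lifts to the desired $P_1 \in \sU(\O_v)$. Since $\ell_1(P_1)$ and $\ell(P_1)$ are units, $\inv_v \sA(P_1) = 0$.

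To construct $P_2$, I will use that $\Delta$ is a smooth conic over $K$ and hence $\Delta(K_v) \neq \emptyset$ for all but finitely many places $v$. Picking such a $v$ and choosing $R \in \Delta(K_v)$ with $X_1(R) \in \O_v^*$ (possible since $\Delta_v \cap Z(X_1)_v$ is a proper subvariety of the positive-dimensional $\Delta_v$), I work in the chart $X_1 = 1$ in which $\sQ$ is smoothly parametrised by $(X_3, X_4)$ via $X_2 = c(X_3^2 - dX_4^2)$ and $\DDelta$ is the zero locus of $\tilde\ell(X_3, X_4) := \ell(1, c(X_3^2 - dX_4^2), X_3, X_4)$. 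Smoothness of $\DDelta_v$ at $\overline{R}$ ensures some partial derivative, say $\partial_{X_3} \tilde\ell(R)$, is a unit modulo $\pi_v$. Setting $P_2 := (X_3(R) + \pi_v^m, X_4(R))$, a first-order Taylor expansion gives $\tilde\ell(P_2) = \pi_v^m \partial_{X_3} \tilde\ell(R) + O(\pi_v^{2m})$, so $v(\ell(P_2)) = m$ exactly. Then $P_2$ is a strict Darmon $\O_v$-point (since $m \mid m$ and $\ell(P_2) \neq 0$), and $\inv_v \sA(P_2) = m \cdot \tfrac{1}{2} \equiv 1/2 \pmod 1$ as $m$ is odd.

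The subtlest step is the parity observation for $v(\ell_1(P))$; without it one might futilely try to make $v(\ell_1)$ odd at a Darmon point, which the above analysis rules out on normalised coordinates. Once this parity constraint is in hand, the rest of the argument reduces to standard smoothness techniques: Hensel's lemma for $P_1$, and a first-order perturbation along a direction transverse to $\DDelta$ at a smooth point for $P_2$.
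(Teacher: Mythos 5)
Your proof is correct, but it takes a genuinely different route from the paper's. The paper obtains both $P_1$ and $P_2$ from a single Hensel argument applied to the system $\ell_1 = 1$, $\ell = \pi_v^{km}$, $\ell_1\ell_2 = c(\ell_3^2 - d\ell_4^2)$, and then simply varies $k$: taking $k=2$ makes $v(\ell_1/\ell) = -2m$ even, and $k=3$ makes it $-3m$, odd because $m$ is odd. In other words, the paper's two points both have large intersection multiplicity ($2m$ and $3m$ respectively) with $\DDelta$, and the invariant is controlled by prescribing $v(\ell)$ directly alongside the normalisation $\ell_1 = 1$.

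What you do differently is to begin with the structural observation that $v(\ell_1)$ is forced to be even in normalised coordinates because $X_1X_2 = c(X_3^2 - dX_4^2)$ with $d$ a non-square unit makes the right-hand side have valuation $2\min(v(X_3),v(X_4))$. This reduces the problem to controlling $v(\ell)\bmod 2$: any integral point ($v(\ell)=0$) gives invariant $0$, and a point with $v(\ell)=m$ (odd) gives invariant $1/2$. You then produce $P_1$ as an honest integral point via a point count on the smooth reduction and Hensel, and $P_2$ via a transverse first-order perturbation of a lift of a smooth $\F_v$-point of $\DDelta_v$. The parity observation is a nice insight the paper leaves implicit: it makes transparent that for Darmon points at such a place, $\inv_v\sA(P) = \tfrac{1}{2}\,v(\ell(P))\bmod 1$ with $v(\ell(P))\in m\ZZ$, hence the invariant can be non-zero only when $m$ is odd — exactly the parity restriction in Theorem \ref{thm:main3}(iii)(b). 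What the paper's approach buys is uniformity (one Hensel system, one parameter $k$, no separate point count or perturbation); what yours buys is the explicit parity constraint, minimal intersection multiplicity for $P_2$, and a $P_1$ that is an actual $v$-adic integral point rather than a Darmon point of high multiplicity. Both arguments tacitly use that $\DDelta_v$ has a smooth $\F_v$-point off $Z(\ell_1)$, which holds once $v$ is large, consistent with the paper's freedom to take $v$ as large as desired.

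One small caution, though it does not affect correctness: the formula $\inv_v\sA(P)=\tfrac{1}{2}(v(\ell_1(P))-v(\ell(P)))$ relies on the representative $(\ell_1/\ell,d)$ of $\sA$, which is only valid where $\ell_1 \neq 0$; at the lone $K_v$-point of $Q$ with $\ell_1 = 0$ (namely $[\ell_1:\ell_2:\ell_3:\ell_4]=[0:1:0:0]$, forced since $d$ is a non-square) one must switch to the representative $(c\ell_2/\ell,d)$, and there the same parity statement holds for $v(\ell_2)$. You already sidestep this by explicitly avoiding $Z(\ell_1)$ for $P_1$, and $P_2$ has $\ell_1 = 1$, so this is purely a remark on the scope of the displayed formula rather than a gap.
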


  \begin{proof}
    Let $k \in \ZZ_{>0}$. We claim that there is a choice of $(x,y,z,t) \in \O_v^4$ such that
    \begin{equation}
    	\label{eqn:systeml_i}
    	\begin{split}
    		c_1\ell_1 + \dots + c_4\ell_4 &= \pi^{km}, \\
    		\ell_1 &= 1, \\
    		\ell_1\ell_2 &= c(\ell_3^2 - d\ell_4^2).
    	\end{split}
    \end{equation}
    The assumption on $\Delta$ being geometrically irreducible implies that it is not possible that $\ell$ is a scalar multiple of either $\ell_1$ or $\ell_2$. Indeed, otherwise $\Delta$ over $\bar{K}$ would have been given by $\ell_3^2 - d\ell_4^2$ which is a union of two conjugate lines.

    To see the claim, we now look at the reduction of the above system over $\Fv$. Treating the $\ell_i$'s as variables in \eqref{eqn:systeml_i}, we first show that there exists a solution with $\ell_i\in\O_v$. If $c_1 \neq 0$ we can argue for example by showing that $(\ell_3, \ell_4) \nequiv (0, 0) \bmod v$. Indeed, if $(\ell_3, \ell_4) \equiv (0, 0) \bmod v$, the last equation of \eqref{eqn:systeml_i} would imply that $\ell_2 \equiv 0 \bmod v$. Since $\ell_1 \equiv 1 \bmod v$, the first equation of the system would now imply that $c_1$ vanishes mod $v$, which contradicts the choice of $v$. On the other hand, $c_1 \nequiv 0 \bmod v$ and $(\ell_3, \ell_4) \nequiv (0, 0) \bmod v$ imply that there is a minor of the Jacobian of \eqref{eqn:systeml_i} of rank 3. Hensel's lemma is thus applicable, hence the existence of an $\sO_v$-solution to \eqref{eqn:systeml_i} is established. 

    Alternatively, if $c_1 = 0$, then since $\ell$ is not a scalar multiple of either $\ell_1$ or $\ell_2$, our assumption on $v$ shows that at least one of $c_3$ or $c_4$ does not vanish mod $v$. Taking $(\ell_2, \ell_3, \ell_4) = (0, 0, 0) \bmod v$ gives a solution of \eqref{eqn:systeml_i} for which the Jacobian of the system has a minor of rank 3. 

    Let $\ell_i = a_i$ for $i = 1, \dots, 4$ be a solution to the system \eqref{eqn:systeml_i} with $a_i \in \O_v$. The assumption on the discriminant of $A$ allows us to apply Cramer's rule to find $(x,y,z,t) \in \O_v^4$ satisfying $\ell_i = a_i$ for $i = 1, \dots, 4$. Finally, the choice of system above was selected in a way such that $(x,y,z,t)$ lies on the quadric $Q$ but not on $\Delta$ and matches the local multiplicity condition for a $v$-adic Darmon point. Therefore this point lies in $(\sQ, \DDelta_m)_{\str}^{\dd}(\O_v)$.

    Let $P_1, P_2 \in (\sQ, \DDelta_m)_{\str}^{\dd}(\O_v)$ be two points as above with $k=2$ for $P_1$ and $k=3$ for $P_2$. Then $\inv_v \sA (P_1)$ vanishes since $\ell_1(P_1)/\ell(P_1) = (1/\pi^{m})^2$ is clearly a norm from the quadratic extension $K_v(\sqrt{d})$. On the other hand, $\inv_v \sA (P_2) = 1/2$ since $\ell_1(P_2)/\ell(P_2)$ has an odd $v$-adic valuation and so cannot be a norm from $K_v(\sqrt{d})$. 
  \end{proof}

  The conclusion of Theorem \ref{thm:main3} follows from Lemma~\ref{lem:inv-surjectivity} and the analysis above.
\end{proof}

Let us now illustrate that the statement of Theorem~\hyperref[thm:main3]{\ref*{thm:main3}\emph{\ref*{m3iiib}}} is false if $\Delta$ is smooth and $m$ is even or $m = \infty$. In the following example the lack of integral points on $\sU$ is explained by the lack of Darmon points for all even $m > 2$.

\begin{example} 
  \label{ex:quadrics-even}
  Let $Q \subset \PP_\QQ^3$ be given by 
    \[
      Q: \quad 9x^2 - 3y^2 = (t - 4z)(t + 4z) 
    \]
with hyperplane section $\Delta:t = 0$. Let $m > 2$ be an even integer and let $\Delta_m = (1 - 1/m)\Delta$. Fix $(\sQ, \DDelta_m)$ to be the obvious integral model of $(Q, \Delta_m)$ over $\ZZ$. The point $([x_p: y_p: z_p: t_p])_p$, where $[x_p: y_p: z_p: t_p] = [1:0:0:3]$ for $p \neq 3$ and $[x_3: y_3: z_3: t_3] = [0:0:1:4]$, is an integral adelic point, hence it is a strict Darmon adelic point for any $m \ge 2$. On the other hand, $\Delta(\QQ_3) = \emptyset$, thus $(\sQ, \DDelta_m)^{\dd}(\AA_\QQ) = (\sQ, \DDelta_m)_{\str}^{\dd}(\AA_\QQ)$
    
  Arguing as in the proof of Theorem~\ref{thm:main3}, we have that $\Br U / \Br \QQ \cong \ZZ / 2\ZZ$ is generated by the image of the quaternion algebra $\sA = (1 - 4z/t, 3)$. We claim that for any adelic Darmon point $([x_p:y_p:z_p:t_p])_p$, the value of $\inv_p \sA$ is $1/2$ if $p = 3$ and zero otherwise.

  Indeed, we work with points not lying on $\Delta$, hence $t \neq 0$. To compute the local invariant maps we can then use the fact that, for $(\cdot, \cdot)_p$ the Hilbert symbol,
  \[
    \inv_p(1 - 4z/t, 3) = \frac{1 - (1 - 4z/t, 3)_p}{4} = \frac{1 - (t^2 - 4zt, 3)_p}{4}.
  \] 

  If $p = \infty$, then the local invariant map vanishes since $3 > 0$. If $p \neq 2, 3$ is a finite prime and $3$ is a quadratic residue mod $p$ then $\inv_p$ vanishes. On the other hand, the Hilbert symbol is bilinear, thus $(t^2 - 4zt, 3)_p = (t - 4z, 3)_p (t, 3)_p$. It is now clear that whenever $3$ is not a quadratic residue mod $p$, then the left hand side of the defining equation of $\sQ$ does not vanish, so both $3$ and $t - 4z$ are units in $\Fp$. Therefore $(t^2 - 4zt, 3)_p = (t, 3)_p$. Our claim now follows form the expression of $(t, 3)_p$ in terms of Legendre symbols and the fact that $v(t)$ is even. 

  If $p = 3$, we claim that the only possible $3$-adic valuations of any non-zero $t$ are $0$ and $1$. Since $m \ge 4$, this implies that $3$ does not divide $t$. To see this, first reduce $\sQ$ mod 3. We see that if $3 \mid t$, then $3 \mid z$, hence $9 \mid 3y^2$, so $3 \mid y$. Dividing through by $9$ and reducing mod $3$ now shows that if $3^2$ divides $t$, then $3 \mid x$, which is a contradiction since $3$ cannot divide the coprime coordinates of $[x:y:z:t]$. Observe now that $(1 - 4z/t, 3) = (1 + 4z/t, 3)$ as quaternion algebras on $Q \setminus \Delta$. Then clearly $t - 4z \equiv 0 \bmod 3$ or $t + 4z \equiv 0 \bmod 3$ with both $t$ and $z$ units mod $3$, which, together with the two different representatives for the generator of the Brauer group, implies that $\inv_3(1 - 4z/t, 3) = 1/2$. 

  Finally, if $p = 2$ the reduction modulo powers of 2 shows that at most $2^3$ divides $t$. Since $m \ge 4$, it follows that $t$ must be odd and then $1 - 4z/t \equiv 1 \bmod 4$. This implies that the corresponding Hilbert symbol vanishes.

In conclusion, we see that for even $m > 2$, there cannot be any global Darmon points on $(\sQ, \DDelta_m)$, since $\sum_p \inv_p \sA = \inv_3 \sA = 1/2$, which gives a Brauer--Manin obstruction to their existence away from $\Delta$.
\end{example}

\begin{example}\label{ex:DHP}
The previous example leads to an example of failure of the Darmon Hasse principle for the orbifolds considered in Section \ref{section:D1}. Let $Q,\Delta$ be as in Example \ref{ex:quadrics-even}. Let $H\subset \P^3$ be the hyperplane defined by $t=0$. Note that $\calH\cong \P^2_{\O_S}$. Let $Q'\subset \P^3$ be the quadric obtained from $Q$ by changing the coefficient of $t^2$ to $-t^2$. The projection away from $[x:y:z:t]=[0:0:0:1]$ gives rise to double covers $f\colon Q\to H$ and $g\colon Q'\to H$. Then we have the following decomposition
\[
(\calH,\DDelta_{2m})(\Z)=f((\calQ,\DDelta_m)(\Z))\cup g((\calQ',\DDelta_m)(\Z)).
\]
We claim that $(\calH,\DDelta_{2m})$ fails the Darmon Hasse principle. By the beginning of the proof of Theorem~\hyperref[thm:main1]{\ref*{thm:main1}\emph{\ref*{m1i}}}, it follows that $(\calH,\DDelta_{2m})$ has local integral (hence Darmon) points everywhere.
Having shown in the previous example that $(\calQ,\DDelta_m)(\Z)=\emptyset$, it suffices to show that $(\calQ',\DDelta_m)(\Z)=\emptyset$ as well. The latter follows from an easy reduction mod $3$ argument that shows $(\calQ',\DDelta_m)(\Z_3)=\emptyset$.
\end{example}

When $\Delta$ is not smooth in Theorem~\ref{thm:main3}, the Brauer group coincides with $\Br K$.

\begin{proposition} \label{prop:main3non-smooth-b}
Let $n \geq 1$, and assume that $\Delta = Q \cap H$ is singular. Let $T$ be a finite non-empty set of places of $K$. 
\begin{enumerate}[label=\emph{(\roman*)}]
\item We have $\Br(Q \setminus \Delta) \cong \Br K$.
\item The orbifold $(\sQ, \DDelta_m)$ satisfies the Hasse principle and strong approximation off $T$ for strict Campana and Darmon points.
\end{enumerate}
\end{proposition}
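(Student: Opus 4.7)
The plan is to establish (i) by exhibiting a $K$-isomorphism $U \cong \A^n_K$ via projection from a $K$-rational singular point of $\Delta$, and to deduce (ii) by combining (i) with Proposition~\ref{prop:BMOequiv} and classical strong approximation for $\A^n$ off any non-empty finite set of places.

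For (i), since $\Delta = Q \cap H$ is singular, its singular locus is a non-empty projective linear $K$-subspace of $H$ (namely, the projectivisation of the radical of the restricted quadratic form $q|_H$, where $q$ is a quadratic form defining $Q$), and hence contains a $K$-rational point $p$. Since $Q$ is smooth, the singularity of $\Delta$ at $p$ forces $T_p Q = H$. In coordinates with $p = [1:0:\ldots:0]$ and $H = \{x_1 = 0\}$, the equation of $Q$ takes the form $2x_0 x_1 + q'(x_1, \ldots, x_{n+1}) = 0$ for some quadratic form $q'$ in $x_1, \ldots, x_{n+1}$. The affine chart $x_1 = 1$ then displays $U = Q \setminus H = Q \setminus \Delta$ as the graph of the polynomial map $(u_2, \ldots, u_{n+1}) \mapsto -\tfrac{1}{2} q'(1, u_2, \ldots, u_{n+1})$, giving $U \cong \A^n_K$ and therefore $\Br U = \Br \A^n_K = \Br K$.

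For (ii), the vanishing of $\Br U / \Br K$ from (i) together with Proposition~\ref{prop:BMOequiv} reduces strong approximation off $T$ for strict Campana and Darmon points on $(\sQ, \DDelta_m)$ to integral strong approximation off $T$ for $\sU$. After enlarging $S$ to some $S' \supseteq S$ so that the isomorphism of (i) spreads out to $\sU_{S'} \cong \A^n_{\sO_{S'}}$, this reduces in turn to classical strong approximation for $\A^n$ off the non-empty finite set $T$; approximating at the finitely many places in $S' \setminus S$ works via the $v$-adic openness of $\sU(\sO_v) \subset U(K_v)$ at places of smooth reduction. The Hasse principle for strict points is then immediate. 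For the non-strict part, the $K$-rational vertex $p \in \Delta(K)$ provides a global non-strict point, giving the Hasse principle, while strong approximation off $T$ for non-strict points amounts to weak approximation for the (singular) quadric cone $\Delta$, which should follow from its rationality obtained by projection from the vertex onto the base smooth quadric.

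The main obstacle will be handling the non-strict strong approximation in the degenerate cases where the base quadric of $\Delta$ lacks a $K$-point (so that $\Delta$ is not $K$-rational); in such cases additional care will be needed, potentially via a case-analysis on the rank of $q|_H$ together with a finer study of the cone structure of $\Delta$.
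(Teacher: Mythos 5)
Your approach to (i) is essentially the paper's projection-from-the-vertex argument, but your execution is cleaner: by writing the defining equation of $Q$ in coordinates adapted to the $K$-rational singular point $p$ of $\Delta$ (using that $Q$ smooth and $\Delta$ singular at $p$ forces $T_pQ = H$) and passing to the affine chart complementary to $H$, you exhibit $U = Q \setminus H$ directly as the graph of a polynomial map, hence $U \cong \A^n_K$ and $\Br U \cong \Br \A^n_K \cong \Br K$ without any appeal to purity. This is a sharper identification of $U$ than appears in the paper, which instead projects $U$ from $p$ onto a complementary hyperplane $H'$ and claims the image is $H' \setminus Z$ for $Z = Q \cap H \cap H'$ of codimension $2$; the image of $U$ under this projection is in fact the complement of the hyperplane $H \cap H'$ in $H'$, i.e.\ $\A^n$, consistent with your computation. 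For the strict part of (ii), your reduction via Proposition~\ref{prop:BMOequiv} to integral strong approximation for $\A^n_{\O_{S'}}$ off the non-empty set $T$, combined with an expand-and-retract step for the finitely many places in $S'\setminus S$, matches the paper's mechanism and handles $n=1$ and $n\geq 2$ uniformly (the paper treats $n=1$ separately via $\A^1_\ZZ$).

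Your flagged concern about the non-strict component of strong approximation is a genuine one, and it applies equally to the paper's own argument: Proposition~\ref{prop:BMOequiv}, on which the paper rests (ii), only addresses strict semi-integral points, whereas Definition~\ref{def:CWA}(ii) requires density in the full adelic space including the non-strict stratum. If $\Delta$ is $K$-irreducible but the base $B$ of the cone $\Delta$ has no $K$-point (e.g.\ $n=2$ with $\Delta$ a pair of Galois-conjugate lines), then $\Delta(K) = \{p\}$ cannot approximate a non-strict adelic point with non-trivial component at a place where $\Delta(K_v)$ is positive-dimensional, so your proposed case analysis on the rank of $q|_H$ and the cone structure is the right thing to worry about. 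Note, though, that the Hasse principle portion of (ii) is unconditionally fine: the vertex $p$ always lies in $\Delta(K)$, is automatically a Campana and Darmon point (all relevant intersection multiplicities are $\infty$), and therefore furnishes a global non-strict point whenever one is needed, so the semi-integral Hasse principle holds regardless of how $\Delta$ decomposes.
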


\begin{proof}
We begin with (i). Assume first that $n=1$. Then $\Delta = Q \cap H$ is a double point, hence the intersection of the tangent line to the smooth conic $Q$ at the point $\Delta \in Q\left(K\right)$, and it follows that $Q \setminus \Delta \cong \AA^1_K$. Then the result follows from the fact that $\Br \AA^1_k \cong \Br k$ for any perfect field $k$ \cite[Thm.~5.6.1(viii)]{CTS2}. Now assume that $n \geq 2$. If $\Delta = Q \cap H$ is singular, then $H$ is the tangent hyperplane to $Q$ at some point $P_0 \in Q$, and $Q \cap H$ is a quadric cone in $H$ with vertex $P_0$ (see \cite[p.~283]{HAR}). It is easily seen that $P_0 \in Q(K)$ as the unique singular point of the $K$-rational variety $Q \cap H$. Let $\H' \subset \PP^{n+1}$ be a plane not containing $P_0$, and consider projection from $P_0$ as a birational map $\phi: Q \dashrightarrow H$. The map $\phi$ restricts to an isomorphism between $U = Q \setminus (Q \cap H) \subset Q$ and the open subset $H' \setminus Z$, where $Z = Q \cap H \cap H'$. Note that the triple intersection $Z$ is of codimension $2$ in $H'$, hence $\Br (Q \setminus \Delta) \cong \Br (H' \setminus Z) \cong \Br(H')$, where the final second isomorphism follows from Grothendieck's purity theorem \cite[Thm.~3.7.6]{CTS2}. Identifying $H'$ with $\PP^n$, we have $\Br (H') \cong \Br (\PP^n) \cong \Br K$, and we deduce the claim.

Now we prove (ii). The case $n=1$ follows from the fact that $\AA^1_\ZZ$ satisfies the integral Hasse principle. Now assume that $n \geq 2$. As shown in the proof of (i), we have $Q \setminus \Delta \cong \PP^n \setminus W$ for $W$ a closed subset of codimension $2$. By \cite[Thm.,~p.~1]{WEI}, $\PP^n \setminus W$ satisfies strong approximation off any place $v_0 \in \Omega_K$. 
It follows via expanding and retracting the set $S$ as in the proof of \cite[Lem.~2.8]{NS} that $(\sQ \setminus \DDelta)(\O_{S \cup \{v_0\}})$ is dense in $(\sQ \setminus \DDelta)(\A_{\O_{S \cup \{v_0\}}}^{\{v_0\}})$. The result then follows from Proposition \ref{prop:BMOequiv}.
\end{proof}

We finish this section with an example of $(\sQ, \DDelta_m)$ with $m$ even where Darmon weak approximation holds. This illustrates the difference between $(\sQ, \DDelta_m)$ and the orbifolds $(\mathbb{P}^{n+1}_{\O_S},\calQ_m)$ studied in Theorem~\ref{thm:main1} which fail weak approximation for even weights.

\begin{example} 
  \label{ex:DWA}
  Let $Q \subset \mathbb{P}^3_{\QQ}$ be the quadric surface given by 
  \[
    Q: 49x^2 - 7y^2 + 16z^2 = t^2,
  \]
  and set $\Delta := Q \cap H$ for $H:t=0$. Let $m \in \ZZ_{\ge 4}$ be even and define $\Delta_m := (1 - 1/m)\Delta$. Set $U := Q \setminus \Delta$. Let $\sQ$, $\DDelta$ and $\sU$ be the obvious $\ZZ$-models of $Q$,$\Delta$ and $U$ respectively. 

  It is clear that $\sU(\Zp) \neq \emptyset$ for $p \neq 2, 7$. Further, $[1:0:0:1]$ mod 8 and $[0:0:2:1]$ mod $7$ lift by Hensel's lemma to $2$-adic and $7$-adic points respectively, hence $\sU(\AA_\ZZ) \neq \emptyset$, which in turn implies that $(\sQ, \DDelta_m)^{\dd}(\AA_\QQ) \neq \emptyset$. Note that $\Delta(\QQ_7) = \emptyset$, hence $(\sQ, \DDelta_m)^{\dd}(\AA_\QQ) = (\sQ, \DDelta_m)_{\str}^{\dd}(\AA_\QQ)$, so weak approximation concerns only strict Darmon points.

  Lemma~\ref{lem:quadricsBGp} implies that $\Br U / \Br \QQ \cong \ZZ/2\ZZ$ is generated by the quaternion algebra $(1 -4z/t, 7)$. The same argument as in Example~\ref{ex:quadrics-even} implies that $\inv_p \sA = 0$ for all primes $p$. Indeed, if $p \neq 2,7$ then either $7$ is a square or $\valp(1 - 4z/t)$ is even. For $p = 2,7$ this follows from the fact that there are no $p$-adic Darmon points outside $\sU(\Zp)$ and a simple computation at integral local points. Then $(\sQ, \DDelta_m)^{\dd}(\AA_\QQ)^{\Br} = (\sQ, \DDelta_m)_{\str}^{\dd}(\AA_\QQ)$. Moreover, the vanishing of $\inv_p \sA = 0$ for all primes $p$ also implies that $\sU(\AA_\ZZ)^{\Br} = \sU(\AA_\ZZ)$. 

  On the other hand, $\Delta(\RR) \neq \emptyset$, thus strong approximation off $\infty$ for integral points on $\sU$ holds by \cite[Thm.~6.3]{CTX}. Proposition~\ref{prop:BMOequiv} now implies that Darmon strong approximation off $\infty$ holds and so Darmon weak approximation must also hold.
\end{example}

\section{Density of Hasse failures} \label{section:fail}

In this section we prove Theorem~\ref{thm:main4}. The upper bound in the statement of Theorem~\ref{thm:main4} follows from the upper bound in \cite[Thm.~1.1]{SAN} since $N_n(B)$ is bounded above by the number of affine diagonal ternary quadrics which fail the integral Hasse principle, with no extra condition imposed upon them. In order to prove the lower bound of Theorem~\ref{thm:main4}, we consider the following family of projective quadrics over the rational numbers:
\[
  Q: \quad 5ab^2x^2 - 25ad^2y^2 + 16c^2z^2 = t^2
\]
for non-zero integers $a, b, c, d$ coprime to $5$, with the additional assumptions that 
\begin{enumerate}[label=(\roman*)]
  \item $a \equiv 1 \bmod 40$ is squarefree and coprime to $c$, and
  \item $bd$ is coprime to $2c$.
\end{enumerate}
To fix the orbifold structure on $Q \subset \PP^3_{\QQ}$, let $m \ge 2$ be an integer and let $D_m = (1 - 1/m)Z(t)$. Finally, let $(\sQ, \DDelta_m)$ be the closure of $(Q,\Delta_m)$ in $\P^3_{\Z}$.

We claim that $\sQ \setminus \DDelta$ lacks integral points because of a Brauer--Manin obstruction but that there is no Brauer--Manin obstruction to the existence of strict Darmon points for any choice of weight $m \ge 1$. Since $5ab^2x^2 - 25ad^2y^2 + 16c^2z^2$ is indefinite, it will then follow from Proposition~\ref{prop:BMOequiv} and \cite[Thm.~6.3]{CTX} that $(\sQ, \DDelta_m)_{\str}^{\dd}(\ZZ) \neq \emptyset$ for all $m \ge 1$. 

To see this, we first need to verify that $(\sQ \setminus \DDelta)(\AA_\ZZ) \neq \emptyset$. It is clear that $[0:0:1:4c] \in (\sQ \setminus \DDelta)(\Zp)$ for $p = \infty$ and for any $p \nmid 2c$.  If $p \mid c$ and $p \neq 2$, then the reduction of the conic $5ab^2x^2 - 25ad^2y^2 - 1 = 0$ mod $p$ has a smooth $\Fp$-point which lifts to a $\Zp$-point with $t = 1$ by Hensel's lemma. This can be verified, for example, by comparing the values of $5ab^2x^2$ and $25ad^2y^2 + 1$ mod $p$. Finally, when $p = 2$, it suffices to look at the reduction of $5ab^2x^2 - 25ad^2y^2 - 1 = 0$ mod 8. Since $b$ and $d$ are odd and $a \equiv 1 \bmod 8$, it is given by $5x^2 - y^2 - 1 \equiv 0 \bmod 8$. It is clear that $(x, y) \equiv (1,2) \bmod 8$ is a solution of this equation mod $8$, and such a solution lifts to a $\ZZ_2$-point on $\sQ \setminus \DDelta$.

The next step is to verify that there is a Brauer--Manin obstruction to the existence of integral points on the affine surface $\sQ \setminus \DDelta$. The Brauer group $\Br (Q \setminus \Delta) / \Br \QQ$ is isomorphic to $\ZZ/2\ZZ$ since $16 \times 125 \times  a^2b^2d^2c^2 \sim 5 \bmod \QQ^{*2}$. This follows from \cite[Prop.~7.3.2]{CTS2}. Moreover, this group is generated by the quaternion Azumaya algebra $((t - 4cz)/t, 5)$, which we will also write as $(1 - 4cz, 5)$ for computational purposes when working over $\sQ \setminus \DDelta$. All local invariant maps at primes different from $5$ vanish for this Brauer element, while the invariant map at $5$ equals 1/2 when evaluated at any $\ZZ_5$-point. This can be seen, for example, by taking into account that 
\[
  \inv_p(1 - 4cz, 5) = \frac{1 - (1 - 4cz, 5)_p}{4},
\]
where the last entry is the Hilbert symbol at $p$. If $p = 2$, the claim follows from considering the congruence class of $5$ modulo $8$. If $p \neq 5$, then the defining equation of $\sQ \setminus \DDelta$ implies that either $5$ is a quadratic residue mod $p$ or the $p$-adic valuation of $(1 - 4cz)$ is even. This last claim follows from the fact that the sum of $1 - 4cz$ and $1 + 4cz$ is always $2$, hence if an odd prime divides one of them, it cannot divide the other. Lastly, if $p = 5$, we take advantage of the fact that $(1 - 4cz, 5) = (1 + 4cz, 5)$ as quaternion algebras over $Q \setminus \Delta$, hence at any given local point, their invariant maps must be equal. It follows from considering the reduction of $\sQ \setminus \DDelta$ modulo $5$ that $4cz$ is congruent to either $1$ or $-1$ modulo $5$. In both cases, we see that $(1 - 4cz, 5)_5 = \(\frac{2}{5}\)$ in terms of the Legendre symbol. Since $2$ is a quadratic non-residue modulo $5$, this confirms our claim.

Lastly, we must show that there is no Brauer--Manin obstruction to the existence of strict Darmon points of any weight $m \ge 1$. For $m = 1$, this is implied by the Hasse principle for rational points on $U$. If $m \ge 2$, by our analysis above, it suffices to show that there is a $5$-adic strict Darmon point for which the local invariant map vanishes. To see that such a point exists, we look at local Darmon points close to $\DDelta$. Consider $[x:y:z:t] = [5:c^2:-5cd:d5^m]$. Under the assumption $m \ge 2$, the above point makes the defining equation for $\sQ \bmod 125$ vanish. Moreover, its partial derivative with respect to $z$ has $5$-adic valuation exactly $1$, so Hensel's lemma implies that this point lifts to a $5$-adic point on $Q$ with $z/5 \equiv -cd \bmod 5$ and $\val_5(t) = m$, thus it lies in $(\sQ, \DDelta_m)_{\str}^{\dd}(\ZZ_5)$. Computing the local invariant map for this point in terms of the Hilbert symbol gives 
\[
  \inv_5(t^2 - 4czt, 5) = \frac{1 - (4c^2d^2, 5)_5}{4} = 0.
\]

Let $B \ge 1$ be a real number. The above analysis shows that if we multiply the defining equation of $\sQ$ through by $n$, the pair $(\sQ, \DDelta_m)$ will be counted by $N_n(B)$ provided that all the coefficients of $\sQ$ are of size at most $B/n$. Since $n$ is fixed, we can redefine $B$ and so, in order to prove Theorem~\ref{thm:main4}, it suffices to count the number of $\sQ$ as above for which $0 < |5ab^2|, |25ad^2|, |16c^2| \le B$. In fact, since we seek a lower bound for this quantity, the sign of the coefficients of $\sQ$ is immaterial, thus, for $\mu(\cdot)$ the M{\"o}bius function,
\begin{equation}
  \label{eqn:quadsum}
  N_n(B) 
  \gg \sum_{\substack{a \le B/25 \\ a \equiv 1 \bmod 40}} \mu^2(a)
  \sum_{\substack{b \le \sqrt{B/5a} \\ (b, 10) = 1}}
  \sum_{\substack{d \le \sqrt{B/25a} \\ (d, 10) = 1}}
  \sum_{\substack{c \le \sqrt{B}/4 \\ (c, 5abd) = 1}} 1.
\end{equation}

If $k$ is a fixed positive integer, then one can count the number of positive integers up to $X$ which are coprime to $k$ using $\mu(\cdot)$ to define an indicator function for the coprimality condition. In particular, if $\tau(k)$ denotes the number of divisors of $k$, then
\begin{equation}
  \label{eqn:n<X, (n,k) = 1}
  \sum_{\substack{n \le X \\ (n,k) = 1}} 1
  = \sum_{r \mid k} \mu(r) \sum_{\substack{n \le X \\ n \equiv 0 \bmod r}} 1
  = X \sum_{r \mid k} \frac{\mu(r)}{r} + O(\tau(k))
  = \frac{\varphi(k)}{k}X + O(\tau(k)).
\end{equation}

To estimate the quadruple sum appearing in \eqref{eqn:quadsum} we begin with the inside sum over $c$ for which we apply the above asymptotic. 
Since $a, b, c$ are all coprime to $5$ and $\varphi$ is multiplicative. the coefficient of the leading term is $4\varphi(abd)/5abd$. Applying $\varphi(abd) \ge \varphi(a) \varphi(b) \varphi(d)$ and using the fact that both $\tau(abd)$ and $\log B$ are $O(B^{\varepsilon})$ thus gives
\begin{equation} \label{eqn:step1}
  N(B)
  \gg B^{\frac{1}{2}} \sum_{\substack{a \le B/25 \\ a \equiv 1 \bmod 40}} \mu^2(a) \frac{\varphi(a)}{a}
  \sum_{\substack{b \le \sqrt{B/5a} \\ (b, 10) = 1}} \frac{\varphi(b)}{b}
  \sum_{\substack{d \le \sqrt{B/25a} \\ (d, 10) = 1}} \frac{\varphi(d)}{d} 
  + O\(B^{1 + \varepsilon}\).
\end{equation}
We then split the summation over $a$ in two smaller sums. If $B/50 < a < B/25$, we bound the sums over $b$ and $d$ trivially by $O(1)$, which produces a total error term of size $O(B^{3/2})$ after applying the trivial bound for the corresponding sum over $a$. If $a \le B/50$ we use that $\varphi(n)/n$ is $1$ on average and so the sums over $b$ and $d$ are both of magnitude $\sqrt{B/a}$. This can be seen, for example, by expanding the Dirichlet convolution of $\varphi(d)/d$. Indeed,
\[
  \begin{split}
    \sum_{\substack{d \le \sqrt{B/25a} \\ (d, 10) = 1}} \frac{\varphi(d)}{d}
    = \sum_{\substack{r \le \sqrt{B/25a} \\ (r, 10) = 1}} \frac{\mu(r)}{r} \sum_{\substack{d \le \sqrt{B/25ar^2} \\ (d, 10) = 1}} 1
    = c_1 \frac{B^{\frac{1}{2}}}{\sqrt{a}} + O\(\log\(\frac{B}{25a}\)\),
  \end{split}
\]
where $c_1$ is a positive constant that can be calculated explicitly by completing the sum over $r$. The above error term comes from trivially bounding the sum over $r$ inside it. Treating the sum over $b$ analogously shows that there is a positive constant $c_2$ such that
\[
  \sum_{\substack{b \le \sqrt{B/5a} \\ (b, 10) = 1}} \frac{\varphi(b)}{b}
  = c_2 \frac{B^{\frac{1}{2}}}{\sqrt{a}} + O\(\log\(\frac{B}{5a}\)\).
\]
Substituting the above asymptotic for the sums over $b$ and over $d$ in \eqref{eqn:step1} generates error terms of magnitude 
\[
  B^{\frac{1}{2}}\sum_{a \le B/50}\( \log\(\frac{B}{5a}\) \)^2 \quad \quad \text{and} \quad \quad
  B \sum_{a \le B/50} \frac{1}{\sqrt{a}} \log\(\frac{B}{5a}\),
\]
where we have used that $\log(B/25a) \ll \log(B/5a)$. Abel's summation formula now implies that each of these error terms is $O(B^{3/2})$. 

We claim that the order of magnitude of the remaining sum over $a$ is $\log B$. To see this, encode $a \equiv 1 \bmod 40$ via orthogonality of Dirichlet characters mod $40$:
\begin{equation}
  \label{eqn:Dirichlet}
  \begin{split}
    \sum_{\substack{a \le B/50 \\ a \equiv 1 \bmod 40}} \mu^2(a) \frac{\varphi(a)}{a^2} 
    = \frac{1}{\varphi(40)}\sum_{\chi \bmod 40} \sum_{\substack{a \le B/50}} \mu^2(a)\frac{\varphi(a)}{a^2} \chi(a).
  \end{split}
\end{equation}
Let $\chi$ mod 40 be a non-principal character mod 40. For $X \ge 1$ a real parameter, Abel's summation formula shows that
\begin{equation}
  \label{eqn:char_partial}
  \sum_{a \le X} \frac{\chi(a)}{a} 
  = \frac{1}{X} \sum_{a \le X} \chi(a) + \int_{1}^{X} \( \sum_{a \le t} \chi(a) \) \frac{dt}{t^2} 
  \ll 1,
\end{equation}
where the upper bound follows from the fact that $\sum_{a \le X} \chi(a)$ is $O(1)$ and the implied constant depends only on the conductor which is a fixed constant in this case. On the other hand, if $\chi$ mod 40 is principal, then the same method and \eqref{eqn:n<X, (n,k) = 1} yield
\begin{equation}
  \label{eqn:char_partial_principal}
  \sum_{a \le X} \frac{\chi(a)}{a} 
  = \frac{1}{X} \sum_{\substack{a \le X \\ (a, 40) = 1}} 1 + \int_{1}^{X} \( \sum_{\substack{a \le t \\ (a, 40) = 1}} 1 \) \frac{dt}{t^2} 
  = \frac{2}{5} \log X + O(1).
\end{equation}
Let $S(\chi, B)$ denote the inside sum over $a$ in \eqref{eqn:Dirichlet}. From the Dirichlet convolution of $\varphi(a)/a$, the multiplicativity of $\chi$ and the fact that the M\"obius function is supported on squarefree integers, it follows that
\[
  \begin{split}
    S(\chi, B) &= \sum_{\substack{a \le B/50}} \mu^2(a)\frac{\varphi(a)}{a^2} \chi(a) \\
    &= \sum_{\substack{a \le B/50}} \frac{\mu^2(a)}{a} \chi(a) 
      \sum_{r \mid a} \frac{\mu(r)}{r}
    = \sum_{r \le B/50} \frac{\mu(r)}{r^2} \chi(r) 
      \sum_{\substack{a \le B/50r \\ (a, r) = 1}} \frac{\mu^2(a)}{a} \chi(a).
  \end{split}
\] 
Expanding $\mu^2(a) = \sum_{d^2 \mid a} \mu(d)$ then yields
\[
  S(\chi, B)
  = \sum_{r \le B/50} \frac{\mu(r)}{r^2} \chi(r) 
    \sum_{\substack{d \le \sqrt{B/50r} \\ (d, r) = 1}} \frac{\mu(d)}{d^2}\chi(d^2) 
    \sum_{\substack{a \le B/50rd^2 \\ (a, r) = 1}} \frac{\chi(a)}{a}.
\]
Lastly, using the indicator function $\sum_{t \mid (a, r)} \mu(t)$ for the condition $(a,r) = 1$ gives 
\[
  S(\chi, B)
  = \sum_{d \le \sqrt{B/50}} \frac{\mu(d)}{d^2}\chi(d^2)
    \sum_{\substack{r \le B/50d^2 \\ (r, d) = 1}} \frac{\mu(r)}{r^2} \chi(r) 
    \sum_{t \mid r} \frac{\mu(t)}{t} \chi(t)
    \sum_{a \le B/50rtd^2} \frac{\chi(a)}{a}.
\]

If $\chi$ mod 40 is non-principal, we may apply \eqref{eqn:char_partial}. Alternatively, for the principal character mod $40$, we apply \eqref{eqn:char_partial_principal}. After doing so, in either case, the sum over $t$ is at most $\tau(r) \ll r^{\varepsilon}$, which in combination with the properties of the logarithm implies that the remaining sums over $r$ and $d$ are convergent, hence all of the sums $S(\chi, B)$ for non-principal characters $\chi$ contribute to an error of size $O(1)$ in \eqref{eqn:Dirichlet}, while $S(\chi, B)$ for the principal character mod $40$ is asymptotically $c_3 \log B$ for some positive constant $c_3$.
Thus, in view of \eqref{eqn:step1} we have $N_n(B) \gg B^{3/2}\log B$, which completes the proof of Theorem~\ref{thm:main4}.


\end{document}